\def\xyma{\xymatrix@M.7em}
\numberwithin{equation}{section}
\newtheorem{theorem}{Theorem}
\newtheorem{rem}[theorem]{Remark}
\newtheorem{defi}[theorem]{Definition}
\newtheorem{prop}[theorem]{Proposition}
\newtheorem{lemma}[theorem]{Lemma}
\newtheorem{cor}[theorem]{Corollary}
\newcommand{\mr}[1]{ \stackrel{#1}{\longrightarrow} }
\newcommand{\ml}[1]{ \stackrel{#1}{\longleftarrow} }
\newcommand{\nc}{\newcommand}
\nc{\Ker}[1]{\mbox{Ker$(#1)$}}
\nc{\impl}{\Rightarrow}
\begin{document}
\newcommand{\N}{\noindent}
\newcommand{\C}{\mbox{$\mathbb C$}}
\newcommand{\REF}[1]{(\ref{#1})}

\title{Internal object actions in 
homological categories}

\author{Manfred Hartl \& Bruno Loiseau}

\address{Universit\'e de Lille - Nord de France,  Laboratoire de Math\'ematiques et de leurs Applications de Valenciennes and FR CNRS 2956, UVHC, Le Mont Houy, 
F-59313 Valenciennes Cedex 9, France.}
\email{manfred.hartl@univ-valenciennes.fr}
\email{bruno.loiseau@lille.iufm.fr}

\maketitle

\begin{abstract}
Let $G$ and $A$ be objects of a finitely cocomplete homological category $\mathbb C$.
We define a notion of an (internal) action of $G$ of $A$ which is functorially equivalent with a point in $\mathbb C$ over $G$, i.e. a split extension in $\mathbb C$ with kernel $A$ and cokernel $G$. This notion and its study are based on a preliminary investigation of cross-effects of functors in a general categorical context. These also allow us to define higher categorical commutators.
We show that any proper subobject of an object $E$ (i.e., a kernel of some map on $E$ in $\mathbb C$) admits a ``conjugation'' action of $E$, generalizing the conjugation action of $E$ on itself defined by Bourn and Janelidze. If $\mathbb C$ is semi-abelian, we show that for subobjects $X$, $Y$ of some object $A$, $X$ is proper in the supremum of $X$ and $Y$ if and only if $X$ is stable under the restriction to $Y$ of the conjugation action of $A$ on itself. This amounts to an elementary proof of Bourn and Janelidze's functorial equivalence between points over $G$ in $\mathbb C$ and algebras over a certain monad $\mathbb T_G$ on $\mathbb C$. The two axioms of such an algebra can be replaced by three others, in terms of cross-effects, two of which generalize the usual properties of an action of one group on another.


\end{abstract}
\vspace{5mm}

\textit{MSC:} 18A05, 18A20, 18A22 ;
 \textit{key words:} action, semi-direct product, conjugation, proper subobject, commutator,
homological category,  semi-abelian category,  algebra over monad.
\vspace{1cm}

For two objects $G$ and $A$ of a category $\mathbb C$, an \textit{action} of $G$ on $A$ morally should be some minimal data equivalent to a split extension with kernel $A$ and cokernel $G$ (also called a point of $\mathbb C$ over $G$), which then may be called the semi-direct product of $A$ and $G$ along the given data. For  semi-abelian categories $\mathbb C$, an elegant solution of the problem to exhibit such minimal data was first provided by Bourn and Janelidze in \cite{BJ}, by defining an action to be an algebra over a certain monad $\mathbb T_G$ on $\mathbb C$. This notion then was further studied by various authors, in particular in terms of monoidal categories by Borceux, Janelidze and Kelly in \cite{BJK}. 

In this paper, we define a notion of action of $G$ on $A$ in terms of even ``smaller'' data then a
$\mathbb T_G$-algebra, namely as being a map $(A|G)\to A$ satisfying a certain property, where $(A|G)$ is the kernel of the canonical map from the sum $A+G$ to the product $A\times G$. An action in this sense is functorially equivalent to a point over $G$  even in a non-exact context, namely in finitely cocomplete homological categories $\mathbb C$ (which are of special interest to the authors, as, for example, certain categories of filtered groups are of this type, see the example at the end of section 3). Moreover, our notion allows to directly generalize certain basic concepts from groups and Lie-algebras, such as a conjugation action of an object on any proper subobject, and to formalize the fact that the semi-direct product along some action can be viewed as its universal transformation into a conjugation action. This leads to a very useful characterization of proper subobjects in semi-abelian categories, namely as those being stable under the conjugation action (this result was recently  obtained independantly by Mantovani and Metere in \cite{MM}); and this property characterizes semi-abelian categories among finitely cocomplete homological ones. More generally, we define a notion of one subobject normalizing another one, in terms of the conjugation action, which is equivalent to the latter being proper in the supremum of both when $\mathbb C$ is semi-abelian (Theorem \ref{propercrit}). 

These facts lead to many applications: e.g., based on a detailed comparison of our actions with $\mathbb T_G$-algebras, they allow to reprove the equivalence between these algebras and points over $G$ when $\mathbb C$ is semi-abelian, in an elementary way without using Beck's criterion. More applications are given in our forthcoming study of extensions with non-abelian kernel and of higher commutators of subobjects as introduced in this paper (Definition \ref{comdef}). It is based on the notion of \textit{cross-effect} of a functor which is fundamental in algebraic topology; it was introduced, for functors between abelian categories, by Eilenberg and MacLane \cite{EML}, and adapted to functors with values in the category of groups by Baues and Pirashvili \cite{BP}, see also \cite{HV} for further properties. 
In section 1 of this paper, we define cross-effects in a general categorical context and exibit some elementary properties, which constitute our main tools: in fact, the object $(A|G)$ can be viewed as the second cross-effect of the identity functor of $\mathbb C$. In particular, we use the cross-effect  machinery to cut the axioms of a  
$\mathbb T_G$-algebra in three pieces, two of which again look like associativity conditions, on the terms $((A|G)|A)$ and $((A|G)|G)$, and have nice interpretations in the category of groups: the first one says that $G$ acts by endomorphisms of $A$, and the second then expresses the usual associativity condition for the action of $G$ on the underlying set of $A$. Thus the third condition  is void in the category of groups, but probably not in general. It involves a tripel cross-effect and actually looks quite odd; 
we did not yet succeed to give it a more convenient form.\vspace{1cm}


\begin{center}\sc Basic conventions
\end{center}\medskip

Throughout this paper, $\mathbb C$ denotes a finitely cocomplete homological category, cf.\ \cite{BB}.
When  merely working in a pointed category with finite sums, we denote the canonical inclusion $X_k\rightarrow X_1+\ldots +X_n$ by $i_{X_k}$ or by $i_k$, and its canonical retraction by $r_{X_k}$ or by $r_k$; dually, when working in  a pointed category with finite products, we denote the canonical projection $X_1\times \ldots \times X_n\rightarrow X_k$ by $\pi_k$ and its canonical section by $\sigma_k$. When the category is pointed and has finite products and finite sums, the canonical map $ X_1+\ldots +X_n\rightarrow X_1\times \ldots \times X_n$ is denoted by $b_{X_1\ldots X_n}$ or simply by $b$.


\section{Cross effects of functors}

The fundamental concept of cross-effect  of a functor between abelian categories is due to Eilenberg and MacLane \cite{EML}; it was adapted to functors with values in the category of groups in \cite{BP} and further studied  in \cite{HV}. 
In this paper and subsequent work we will tend to show, however, that the concept of cross-effect provides a powerful in the theory of homological and semi-abelian categories, so we keep our hypothesis' as general as possible. We here content ourselves to prove just some elementary properties of cross-effects in this framework; a more thorough investigation is carried out in \cite{CCC}.\medskip

Let $F: {\mathbb D} \to {\mathbb E}$ be a functor where 
${\mathbb D}$ is a  pointed category  with finite sums and 
${\mathbb E}$ is a  pointed finitely complete category.

\begin{defi} For any nonzero natural number $n$, the $n$-th cross effect of $F$ is defined to be the functor $cr_n(F):\mathbb D^{\times n}\rightarrow \mathbb E$ inductively defined  by:
\[ cr_1(F)(X)=\mbox{Ker}(F(0)\colon F(X)\rightarrow F(0))\,;\]
\[cr_2(F)(X,Y)=\mbox{Ker \big($(F(r_X),F(r_Y))^t\colon F(X+ Y)\rightarrow F(X)\times F(Y)\big)$}\,;\]
\[ cr_n(F)(X_1,X_2,\ldots X_n)=cr_2\big(cr_{n-1}(F)(-,X_3,\ldots,X_n)\big)(X_1,X_2)\,, \quad\mbox{for $n\geq3$}\,.\]
The definition of $cr_1$ and $cr_2$ (hence of $cr_n$) on morphisms is  obvious.
\end{defi}

One often abbreviates $cr_n(F)(X_1,X_2,\ldots X_n)=F(X_1|\ldots|X_n) $.  For the identity functor $Id_{\mathbb C}$ of $\mathbb C$ and objects $X_1,\ldots,X_n$ in $\mathbb C$  we write
$(X_1|\ldots|X_n) =  Id_{\mathbb C}(X_1|\ldots|X_n)$, so that for instance, by the very definition, $(X|Y|Z)$ is the kernel of the canonical map from $(X+Y|Z)$ to $(X|Z)\times (Y|Z)$.

\begin{prop}\label{elcrprops} The functors $cr_n$ have the following properties:
\begin{enumerate}

\item There is a natural injection
\[ \xymatrix{
 \iota_{X_1,\ldots ,X_n}^F \colon cr_n(F)(X_1,\ldots,X_n) \ar@{{ >}->}[r] & F(X_1+\ldots+ X_n) }\]

\item The functor $cr_n$ is multireduced, i.e.\ $cr_n(F)(X_1,\ldots,X_n) = 0$ if $X_k=0$ for some $k=1,\ldots,n$.

\item The bifunctor $cr_2(F)$ is symmetric.

\end{enumerate}

\end{prop}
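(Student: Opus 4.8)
The plan is to prove all three assertions simultaneously by induction on $n$, with the cases $n=1$ and $n=2$ read directly off the definition and the step $n\geq 3$ reduced to the case $n=2$ applied to the auxiliary functor $G:=cr_{n-1}(F)(-,X_3,\ldots,X_n)\colon\mathbb D\to\mathbb E$, which again goes from a pointed category with finite sums to a pointed finitely complete one and so satisfies the same standing hypotheses as $F$, together with the induction hypothesis applied to $cr_{n-1}(F)$. For (1) I would observe that $cr_1(F)(X)$ and $cr_2(F)(X,Y)$ come by construction with kernel inclusions into $F(X)$ and $F(X+Y)$, which are monomorphisms natural in all their arguments; this settles $n\leq 2$. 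For $n\geq 3$ one composes the $n=2$ inclusion $cr_n(F)(X_1,\ldots,X_n)=cr_2(G)(X_1,X_2)\hookrightarrow G(X_1+X_2)=cr_{n-1}(F)(X_1+X_2,X_3,\ldots,X_n)$ with the inclusion furnished by the induction hypothesis, $cr_{n-1}(F)(X_1+X_2,X_3,\ldots,X_n)\hookrightarrow F((X_1+X_2)+X_3+\cdots+X_n)=F(X_1+\cdots+X_n)$. A composite of monomorphisms is a monomorphism and each factor is natural, which yields $\iota^F_{X_1,\ldots,X_n}$.

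For (3) I would use the symmetry isomorphism $s\colon X+Y\to Y+X$. Testing on the inclusions $i_X,i_Y$ shows that, writing $r'_X,r'_Y$ for the retractions of $Y+X$, one has $r'_X\circ s=r_X$ and $r'_Y\circ s=r_Y$; hence $(F(r'_Y),F(r'_X))^t\circ F(s)$ equals $(F(r_X),F(r_Y))^t$ followed by the canonical symmetry $F(X)\times F(Y)\to F(Y)\times F(X)$. This exhibits a commutative square whose two horizontal arrows, namely $F(s)$ and the product symmetry, are isomorphisms and whose vertical arrows are the defining maps of $cr_2(F)(X,Y)$ and $cr_2(F)(Y,X)$; passing to vertical kernels then gives an isomorphism $cr_2(F)(X,Y)\cong cr_2(F)(Y,X)$, automatically natural in $X$ and $Y$ and compatible with the inclusions of (1).

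For (2): since $0$ is both initial and terminal, $i_X\colon X\to X+0$ is an isomorphism with inverse $r_X$, so $F(r_X)$ is an isomorphism and $(F(r_X),F(r_0))^t$ is a monomorphism (its first component is invertible); its kernel $cr_2(F)(X,0)$ is therefore $0$, and by (3) the same holds for $cr_2(F)(0,Y)$, which settles $n\leq 2$. For $n\geq 3$: if the vanishing entry is $X_1$ or $X_2$, apply the previous sentence to $G$ in place of $F$; if it is $X_k$ with $k\geq 3$, the induction hypothesis makes $G$ the zero functor, and $cr_2$ of the zero functor is zero since its defining arrow is the identity of $0$. In every case $cr_n(F)(X_1,\ldots,X_n)=0$.

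I do not expect a genuine obstacle: finite completeness of $\mathbb E$ is invoked only to guarantee that the kernels in sight exist and vary functorially, and symmetry of the coproduct is what powers (3). The one point demanding a little care is the bookkeeping in (2): the recursion singles out the first two coproduct slots, so a vanishing entry in slot $1$ or $2$ must be handled via the $n=2$ case for $G$, while a vanishing entry in a later slot is handled via the induction hypothesis for $cr_{n-1}(F)$ --- both routes, of course, landing on $0$.
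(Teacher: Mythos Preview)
Your proof is correct and follows essentially the same approach as the paper's: induction for (1) and (2), and the coproduct symmetry for (3). The paper's own argument is extremely terse (it simply declares (3) ``obvious'' and says (1) and (2) are ``easily proved by induction'', giving only the base case for (2) by exhibiting a retraction of $(F(r_X),F(r_Y))^t$ when $X=0$), so you have in fact supplied the details the paper omits, including the case split in the inductive step of (2). One tiny omission: you write ``which settles $n\leq 2$'' after only treating $n=2$; the case $n=1$ (i.e.\ $cr_1(F)(0)=\mathrm{Ker}(1_{F(0)})=0$) is trivial but, strictly speaking, still needs a word.
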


In fact, we show in \cite{CCC} that the functor $cr_n(F)$ is symmetric for all $n$, but we do not need this here.

When no confusion can arise we often just write $\iota$ instead of $\iota_{X_1,\ldots ,X_n}^F$.

\begin{proof} Assertion (3) is obvious, the other two are easily proved by induction.
For (2), just observe that if $X=0$, also $cr_2(X,Y)=0$ since the map $(F(r_X),F(r_Y))^t \colon F(0+Y) \to F(0)\times F(Y)$ admits the second projection followed by $F(i_{Y})$ as a retraction.
\end{proof}

The following facts are  key tools in handling cross effects.

\begin{prop}\label{cr-reg-epi} Suppose in  addition  that $\mathbb E$ is homological and that $F$ preserves regular epimorphisms. Then for all objects $A$ in $\mathbb D$ the functor $F(A|-):\mathbb D \to \mathbb E$ also preserves regular epimorphisms.
\end{prop}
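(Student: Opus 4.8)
The plan is to fix a regular epimorphism $f\colon X \twoheadrightarrow Y$ in $\mathbb D$ and show that $F(A|f)\colon F(A|X)\to F(A|Y)$ is a regular epimorphism in $\mathbb E$. First I would dispose of the case where $F$ is the identity (or rather reduce to understanding the map on the ambient objects): by definition $F(A|X)$ sits in the kernel diagram
\[ \xymatrix{ F(A|X) \ar@{{ >}->}[r]^-{\iota} & F(A+X) \ar[r]^-{(F(r_A),F(r_X))^t} & F(A)\times F(X)\,. } \]
Applying $F$ to the split epimorphism $\mathrm{id}_A + f\colon A+X \to A+Y$ (split by $\mathrm{id}_A + s$ for any set-theoretic... no — split by $\mathrm{id}_A + \sigma$ where $\sigma$ is a section of $f$ if one exists; but $f$ need not be split). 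Instead, since $A+f\colon A+X\to A+Y$ is a regular epimorphism in $\mathbb D$ (sums of regular epis are regular epis, $\mathrm{id}_A$ being one, in a regular — indeed homological — category), the hypothesis that $F$ preserves regular epimorphisms gives that $F(A+f)\colon F(A+X)\twoheadrightarrow F(A+Y)$ is a regular epimorphism in $\mathbb E$.

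Next I would set up the comparison. We have a commutative diagram with exact-ish rows (kernel sequences) in the homological category $\mathbb E$:
\[ \xymatrix{ F(A|X) \ar[d]_{F(A|f)} \ar@{{ >}->}[r]^-{\iota_X} & F(A+X) \ar@{->>}[d]^{F(A+f)} \ar[r] & F(A)\times F(X) \ar[d]^{\mathrm{id}\times F(f)} \\ F(A|Y) \ar@{{ >}->}[r]^-{\iota_Y} & F(A+Y) \ar[r] & F(A)\times F(Y)\,. } \]
The right-hand vertical map $\mathrm{id}_{F(A)}\times F(f)$ is a regular epimorphism as well (product of two regular epis, using again that $F(f)$ is one), hence in particular a monomorphism's complement... more to the point, the middle vertical arrow restricts to $F(A|X)$ landing in $F(A|Y)$ precisely because the right square commutes and $F(A|Y)$ is the kernel. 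The key point is then: $F(A|X)$ is the pullback of $F(A+X)\to F(A)\times F(X)$ along the zero map — equivalently, it is the kernel — and I want to conclude surjectivity of the left vertical map from surjectivity of the middle and right ones. This is exactly the situation of the (regular-epi) short five lemma / the fact that in a homological category, given a morphism of kernel sequences in which the two right-hand verticals are regular epimorphisms, the left-hand vertical is a regular epimorphism too. The cleanest way: form the kernel of $\mathrm{id}\times F(f)$, which is $0\times \mathrm{Ker}(F(f))$, observe that $F(A+f)$ maps $\iota_X^{-1}(\text{that kernel})$ onto it, and run a diagram chase that in a homological category is legitimate via the metabelian/exactness calculus of \cite{BB}; alternatively invoke that regular epimorphisms are stable under pullback and that the induced map on kernels of two horizontally-composable regular epi morphisms of short exact sequences is regular epi.

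The main obstacle I anticipate is the last step — extracting regularity of $F(A|f)$ from the diagram — because $\mathbb E$ is only homological, not abelian, so I cannot simply chase elements or quote the snake lemma. I would handle it by working with the regular image: let $q\colon F(A+X)\twoheadrightarrow I \hookrightarrow F(A+Y)$ factor $F(A+f)$ — but this is already epi, so $I = F(A+Y)$. The real content is that the restriction of a regular epimorphism to a kernel need not be regular epi in general, so I must genuinely use that the \emph{cokernel direction} is also controlled: here the cokernels of the two horizontal kernel-inclusions inject compatibly into $F(A)\times F(X)$ resp.\ $F(A)\times F(Y)$, and the map between these cokernels is the monomorphic... no, it is $\mathrm{id}\times F(f)$ which is epi. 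So by the classical argument (valid in any homological category, see \cite{BB}, the "$3\times 3$" or short five lemma for regular epis) — given a commutative square with both the top arrow and the right arrow regular epimorphisms and forming part of a morphism of kernel diagrams, the induced map on kernels is a regular epimorphism iff a certain comparison holds, and here it holds because $\iota$ is the honest kernel and $F$ preserves the relevant regular epi. I would therefore present the proof as: (i) $F(A+f)$ regular epi; (ii) assemble the morphism of kernel sequences; (iii) apply the homological short five lemma in the form "middle and right verticals regular epi $\Rightarrow$ left vertical regular epi", citing \cite{BB}.
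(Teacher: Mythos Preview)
Your diagram is the right one, and reducing to the regular epimorphism $F(1_A+f)$ is correct (a minor quibble: $\mathbb D$ is only assumed pointed with finite sums, not regular, but $A+(-)$ preserves coequalizers for purely formal reasons, so $1_A+f$ is indeed a regular epimorphism).

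The genuine gap is step (iii). There is no ``short five lemma'' of the shape \emph{middle and right verticals regular epi $\Rightarrow$ left vertical regular epi}; this implication is false already in abelian groups. Take the morphism of kernel sequences
\[
\xymatrix{
2\Z \ar@{{ >}->}[r] \ar[d] & \Z \ar[r] \ar[d]^{=} & \Z/2 \ar[d] \\
\Z \ar@{{ >}->}[r]^{=} & \Z \ar[r] & 0\,.
}
\]
The middle vertical is the identity and the right vertical is the unique map to $0$, both regular epimorphisms, yet the induced map on kernels is the non-surjective inclusion $2\Z\hookrightarrow\Z$. So you cannot simply cite \cite{BB} here; some extra input specific to the situation is required, and your proposal does not supply it.

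What the paper actually does is apply the snake lemma to your diagram (rotated), obtaining an exact sequence
\[
F(A|X)\ \mr{F(1|f)}\ F(A|Y)\ \longrightarrow\ {\rm Coker}(\alpha)
\]
where $\alpha\colon {\rm Ker}\big(F(1_A+f)\big)\to {\rm Ker}\big(1\times F(f)\big)\cong {\rm Ker}(F(f))$ is the map induced by $(Fr_A,Fr_X)^t$. The decisive extra step is to show ${\rm Coker}(\alpha)=0$ by exhibiting an explicit section of $\alpha$: the composite $F(i_X)\circ k\colon {\rm Ker}(F(f))\to F(A+X)$ lands in ${\rm Ker}(F(1_A+f))$ and splits $\alpha$. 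This uses the coproduct inclusion $i_X\colon X\to A+X$, i.e.\ precisely the structure that distinguishes your situation from the counterexample above. You gesture toward this (``form the kernel of $\mathrm{id}\times F(f)$ \ldots observe that $F(A+f)$ maps \ldots onto it'') but never actually write down the section or invoke the snake lemma, and then retreat to the false five-lemma claim. Filling in that section is the whole content of the proof.
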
 

\begin{proof} Let $\xymatrix{ f\colon X \ar@{>>}[r]&Y}$ be a regular epimorphism.  Consider the following commutatif diagram where  $\xymatrix{k\colon {\rm Ker}(F(f)) \ar@{{ |>}->}[r] &F(X)}$ is a kernel of $F(f)$ and $\alpha$ is induced by $(F(r_X),F(r_Y))^t$.
:
\[\xymatrix{ & & F(A|X) \ar[d]^{\iota} \ar[r]^-{F(1|f)} & F(A|Y)\ar[d]^{\iota}   & \\
& {\rm Ker}(F(1+f)) \ar[d]^{\alpha} \ar@{^(->}[r]^j & F(A+X) \ar[r]^-{F(1+f)}  \ar[d]^{(Fr_A,Fr_X)^t} & F(A+Y) \ar[d]^{(Fr_A,Fr_Y)^t} \ar[r] & 0 \\
0\ar[r] & {\rm Ker}(F(f)) \ar[r]^-{(0,k)^t} & F(A)\times F(X) \ar[r]^-{1\times F(f)} & F(A)\times F(Y)& \rule[-3mm]{0mm}{0mm}
}\]
The columns are exact by definition of the cross-effect, and the rows are exact, too; for the middle row this follows from the hypothesis on $F$ since     $1+f$ is a regular epimorphism. Thus the snake lemma provides an exact sequence 
\[ \xymatrix{ F(A|X) \ar[r]^-{F(1|f)} & F(A|Y) \ar[r] & {\rm Coker}(\alpha) 
}\]
We claim that ${\rm Coker}(\alpha) =0$: in fact, 
the map $F(i_X)k\colon {\rm Ker}(F(f)) \to F(A+X)$ factors through $j$ and thus provides a section $s$ of $\alpha$, indeed:
$F(1+f)F(i_Y)k = F(i_Y)F(f)k=0$, and
\[(0,k)^t \alpha s = (Fr_A,Fr_X)^t j s = (Fr_A,Fr_X)^t F(i_X)k = (0,1)^t k = (0,k)^t\,
\]
whence $\alpha s =1$ since $(0,k)^t$ is monic.

\end{proof}

\begin{prop}\label{crsdp} Suppose  in  addition that ${\mathbb D}$ is protomodular, $\mathbb E$ is homological and that $F$ preserves regular epimorphisms.
Let $f:X\to Y$ be a map in ${\mathbb D}$ with splitting $s:Y\to X$, i.e.\ such that $fs=1$. Let $k:K \to X$ be a kernel of $f$ and let $A\in Ob({\mathbb D})$. Then the map
\[<\iota'   , F(k|1), F(s|1) >\hspace{2mm}:\hspace{1mm}
F(K|Y|A) + F(K|A) + F(Y|A) \to F(X|A)\]
is a regular epimorphim where $\iota'$ is the composite map
$$\xymatrix{F(K|Y|A) = F(-|A)(K|Y) \ar[rr]^-{\iota}&&F(K+ Y|A) \ar[rr]^{F(<k,s>|1)}&&F(X|A)}$$%
where more precisely $\iota = \iota_{K,Y}^{F(-|A)}$.

\end{prop}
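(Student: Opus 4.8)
The plan is to exploit the fact that $f:X\to Y$ with splitting $s$ makes $X$ a semi-direct product $K\rtimes Y$ in the protomodular category $\mathbb D$: by protomodularity the pair $(k,s)$ is jointly strongly epic, so the comparison map $\langle k,s\rangle : K+Y\to X$ is a regular epimorphism. First I would apply the functor $F(-|A) = cr_2(F)(-,A)$ to this regular epi. By Proposition \ref{cr-reg-epi} (with the roles suitably arranged, using that $\mathbb E$ is homological and $F$ preserves regular epis), $F(-|A)$ preserves regular epimorphisms, so $F(\langle k,s\rangle|1): F(K+Y|A)\to F(X|A)$ is a regular epimorphism. Thus it suffices to show that the map $\langle \iota, F(k|1), F(s|1)\rangle$ restricted along $F(K+Y|A)$ — more precisely, that the canonical map $F(K|Y|A)+F(K|A)+F(Y|A)\to F(K+Y|A)$ is a regular epimorphism; composing with $F(\langle k,s\rangle|1)$ then gives the claim, since $\iota' = F(\langle k,s\rangle|1)\circ \iota$ and $F(k|1), F(s|1)$ factor as $F(\langle k,s\rangle|1)$ precomposed with $F(i_K|1)$, $F(i_Y|1)$ respectively (because $\langle k,s\rangle i_K = k$ and $\langle k,s\rangle i_Y = s$).

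So the heart of the matter reduces to the special case $X = K+Y$, $f = (0,1)^t\colon K+Y\to Y$ has the obvious splitting $i_Y$, and $k = i_K$; here I must show that
\[
\langle \iota_{K,Y}^{F(-|A)},\ F(i_K|1),\ F(i_Y|1)\rangle\colon F(K|Y|A)+F(K|A)+F(Y|A)\longrightarrow F(K+Y|A)
\]
is a regular epimorphism. The natural tool is the defining short exact sequence of the cross-effect $cr_2$ applied to the bifunctor $G := F(-|A)$ at the pair $(K,Y)$: namely
\[
0\to G(K|Y)\xrightarrow{\iota} G(K+Y)\xrightarrow{(G(r_K),G(r_Y))^t} G(K)\times G(Y),
\]
which tells us that the image of $\iota$ is exactly the kernel of $(G(r_K),G(r_Y))^t = (F(r_K|1),F(r_Y|1))^t$. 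On the other hand $F(i_K|1)$ and $F(i_Y|1)$ are sections of $F(r_K|1)$ and $F(r_Y|1)$ respectively, so the map $\langle F(i_K|1),F(i_Y|1)\rangle\colon F(K|A)+F(Y|A)\to F(K+Y|A)$ composed with $(F(r_K|1),F(r_Y|1))^t$ is (up to the obvious identifications) the comparison map $F(K|A)+F(Y|A)\to F(K|A)\times F(Y|A)$. The plan is then to run a diagram of the same shape as in the proof of Proposition \ref{cr-reg-epi}: a $3\times 3$-type diagram with the split short exact sequences on $K+Y$ and on the product $G(K)\times G(Y)$, use that in a (pointed, finitely cocomplete) homological category the canonical map $U+V\to U\times V$ is a regular epimorphism for the rows, and conclude by the snake lemma that the cokernel of $\langle \iota,F(i_K|1),F(i_Y|1)\rangle$ vanishes.

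More concretely, I would set up the commutative diagram with three columns
\[
F(K|Y|A)\hookrightarrow F(K+Y|A)\twoheadrightarrow ?,\qquad
0\to F(K|Y|A)\xrightarrow{\iota} F(K+Y|A)\xrightarrow{p} G(K)\times G(Y),\qquad
\]
where $p = (F(r_K|1),F(r_Y|1))^t$, the middle column being the cross-effect sequence, and arrange the split inclusions $F(K|A)+F(Y|A)\rightrightarrows$ appropriately so that the snake lemma computes the cokernel of the full map; the key input is that $p$ restricted to the summand $F(K|A)+F(Y|A)$ surjects onto $G(K)\times G(Y)$ (because $U+V\to U\times V$ is a regular epi in a homological category, applied to $U = G(K)=F(K|A)$, $V = G(Y) = F(Y|A)$), while $F(K|Y|A)$ exactly fills the kernel of $p$. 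The main obstacle I anticipate is the bookkeeping: getting the diagram of the right exact shape so that the snake lemma applies cleanly, and verifying that the relevant squares commute and that $F(i_K|1)$, $F(i_Y|1)$ really do land in the summands as claimed (this uses multireducedness, Proposition \ref{elcrprops}(2), and naturality of $\iota$). Once the diagram is correct, the regular-epi conclusion is forced exactly as in the proof of Proposition \ref{cr-reg-epi}, by exhibiting the cokernel of the comparison map as a quotient that is already zero.
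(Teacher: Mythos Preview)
Your reduction step is correct and is exactly what the paper does: one checks that $\langle \iota', F(k|1), F(s|1)\rangle = F(\langle k,s\rangle|1)\circ \langle \iota, F(i_K|1), F(i_Y|1)\rangle$, and $F(\langle k,s\rangle|1)$ is a regular epimorphism by protomodularity of $\mathbb D$ together with Proposition~\ref{cr-reg-epi}. So everything hinges on showing that
\[
m:=\langle \iota, F(i_K|1), F(i_Y|1)\rangle\colon F(K|Y|A)+F(K|A)+F(Y|A)\longrightarrow F(K+Y|A)
\]
is a regular epimorphism.

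There is, however, a genuine gap in your snake-lemma plan for this step. In the diagram you set up, $m$ is one of the \emph{vertical} maps, with the identity on $F(K|Y|A)$ on the left and the sum-to-product map $b$ on the right. The snake lemma then only yields $\mathrm{Coker}(m)=0$, and in a homological category this does \emph{not} force $m$ to be a regular epimorphism: in groups, the inclusion of any proper subgroup whose normal closure is the whole group has trivial cokernel without being surjective. Your analogy with Proposition~\ref{cr-reg-epi} is misleading: there the map of interest $F(1|f)$ sits in the \emph{kernel row} of the snake sequence, so exactness at $F(A|Y)$ with vanishing next term \emph{does} give a regular epimorphism. Here the roles are different, and the conclusion does not follow.

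The paper avoids this by a direct protomodularity argument rather than the snake lemma. It forms the pullback $P$ of $r=(F(r_K|1),F(r_Y|1))^t$ along the sum-to-product map $(r_1,r_2)^t\colon F(K|A)+F(Y|A)\to F(K|A)\times F(Y|A)$. The resulting projection $p_2\colon P\to F(K|A)+F(Y|A)$ is split (via $\langle F(i_K|1),F(i_Y|1)\rangle$) with kernel $F(K|Y|A)$, so protomodularity of $\mathbb E$ makes $\langle(\iota,0)^t,\sigma\rangle$ a regular epimorphism onto $P$; and $p_1\colon P\to F(K+Y|A)$ is a regular epimorphism as a pullback of the regular epimorphism $(r_1,r_2)^t$ in the regular category $\mathbb E$. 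Composing $p_1$ with $\langle(\iota,0)^t,\sigma\rangle$ gives exactly $m$. Your approach could be repaired along similar lines --- for instance by first observing that $r$ is a regular epimorphism (since $b$ factors through it) so that both rows become short exact, and then invoking the short five lemma to conclude that $m$ is a regular epimorphism from the outer two verticals being so --- but note that this last step again rests on protomodularity, not on the snake lemma alone.
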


\begin{proof} Consider the following commutative diagram whose right-hand square is a pullback:
\[\xymatrix{
0\ar[r] & F(K|Y|A) \ar@{=}[d] \ar[r]^-{(\iota,0)^t} & P\ar[d]^{p_1} \ar[r]^-{p_2} & F(K|A) + F(Y|A)  \ar[d]^{(r_ 1,r_2)^t}&  \\
0\ar[r] & F(K|Y|A) \ar[r]^{\iota} & F(K+Y|A) \ar[r]^-{r} & F(K|A) \times  F(Y|A)
}\]
Here $r=(F(r_K|1), F(r_Y|1))^t$, $r_1=r_{F(K|A)}$ and $r_2=r_{F(Y|A)}$. The bottom row is exact by definition of the cross-effect, hence the top row is also exact. Let $\sigma\colon F(K|A) + F(Y|A) \to P$ be the map such that $p_1\sigma = <F(i_K|1),F(i_Y|1)>$ and $p_2\sigma = 1$. Then $\sigma$ is a splitting of $p_2$, whence by protomodularity  of $\mathbb{E}$ the map
\[ < (\iota,0)^t , \sigma > \hspace{1mm} \colon \hspace{1mm}F(K|Y|A) + F(K|A) + F(Y|A) \longrightarrow P\]
is a regular epimorphism. We then successively conclude that the following maps also are regular epimorphisms:
\begin{itemize}

\item $(r_1,r_2)^t$  by protomodularity of $\mathbb{E}$;

\item $p_1$   by regularity of $\mathbb{E}$;

\item $<k,s>$ by protomodularity of $\mathbb{D}$;

\item $F(<k,s>|1)$ by Proposition \ref{cr-reg-epi};

\item the composite map $F(<k,s>|1) p_1 < (\iota,0)^t , \sigma > $.

\end{itemize}

But 
\begin{eqnarray*}
F(<k,s>|1) p_1 < (\iota,0)^t , \sigma >  &=&
F(<k,s>|1)<\iota, F(i_K|1),F(i_Y|1)> \\
&=& 
<\iota'   , F(k|1), F(s|1) >\,,
\end{eqnarray*}
whence the assertion.
\end{proof}

\section{\label{genprop} General properties of internal object actions}

Recall that the category of (internal) \textit{actions} of an object $G$ of $\mathbb C$ can be equivalently defined as being 
\begin{itemize}
\item The category of split extensions of $G$, whose objects are short split exact sequences $$\xymatrix{0\ar[r]&A\ar[r]&E\ar@<-2pt>[r]_q&G\ar@/_/@<-2pt>[l]_s\ar[r]&0}$$ ( the sequence is exact and $s$ is a spitting of $q$), and such an object of this category is called an action of $G$ on $A$. A map from 
$\xymatrix{0\ar[r]&A\ar[r]&E\ar@<-2pt>[r]_q&G\ar@/_/@<-2pt>[l]_s\ar[r]&0}$ to $\xymatrix{0\ar[r]&A'\ar[r]&E'\ar@<-2pt>[r]_{q'}&G\ar@/_/@<-2pt>[l]_{s'}\ar[r]&0}$ 
being the data of two maps $a:A\rightarrow A'$ and $b:E\rightarrow E'$ making the following diagram commute:
$$\xymatrix{
0\ar[r]&A\ar[d]_a\ar[r]&E\ar[d]_b\ar@<-2pt>[r]_q&G\ar@/_/@<-2pt>[l]_s\ar[r]\ar@{=}[d]&0\\
0\ar[r]&A'\ar[r]&E'\ar@<-2pt>[r]_{q'}&G\ar@/_/@<-2pt>[l]_{s'}\ar[r]&0}$$
\item The category $\mbox{Pt}_G(\mathbb C)$ of $G$-points of $\mathbb C$, i.e. the category $(\mathbb C/G)\backslash(1_G)$, which can described as the category of  objects $E$ of $\mathbb C$ together with a map $q:E\rightarrow G$ and a section $s:G\rightarrow E$ of $q$. A map $b$ between two such objects $(E,q, s)$ and $(E',q',s')$ is a map $b: E\rightarrow E'$ making the following diagram commute: 
$$\xymatrix{
E\ar[d]_b\ar@<-2pt>[r]_q&G\ar@/_/@<-2pt>[l]_s\ar@{=}[d]\\
E'\ar@<-2pt>[r]_{q'}&G\ar@/_/@<-2pt>[l]_{s'}}$$


\item if $\mathbb C$ is moreover exact, hence semi abelian, these two obviously equivalent categories are also equivalent to the category of Eilenberg-Moore algebras over the monad $\mathbb T_G:\C \to \C$ sending an object $A$ to the kernel $T_G(A)$ of the natural retraction $A+ G \to G$, according to Bourn and Janelidze, see \cite{BJ}).

\end{itemize}

The monadic aspect may of course be studied in any pointed finitely complete category with finite sums, as in \cite{BJK} where $T_G(A)$ is considered as a subobject of $G+A$ rather than of $A+G$, and is  denoted by $G\flat A$.

Recall that $\mbox{Pt}_G(\mathbb C)$ is a subcategory of a more general category $\mbox{Pt}(\mathbb C)$, whose objects are points (on variable objects $G$), and a map between two points 
$(G,E,q, s)$ and $(G',E',q',s')$ is a pair of maps $a:G\to G'$ and $b: E\rightarrow E'$ making the following diagram commute: 
$$\xymatrix{
E\ar[d]_b\ar@<-2pt>[r]_q&G\ar@/_/@<-2pt>[l]_s\ar[d]^a\\
E'\ar@<-2pt>[r]_{q'}&G'\ar@/_/@<-2pt>[l]_{s'}}$$

We introduce another way to define a category of $G$-actions, using the (second) cross-effect of the identity functor of $\mathbb C$; this new category is always equivalent to the category of points, hence to the category of algebras described above when $\mathbb C$ is exact; but we also will exhibit the link between our definition and this category of algebras in the non-exact case. We start by relating the cross-effect of the identity functor to the functor $T_G$.

\begin{lemma}\label{inclcrosseff}Let us denote by $k$ (or $k_{A,G}$) the inclusion of $T_G(A)$ in $A+G$. Then there exists an inclusion $j$ (or $j_{A,G}$) of $(A|G)$ in $T_G(A)$, yielding a split short exact sequence
$$\xymatrix{
0\ar[r]&(A|G)\ar[r]^j&T_G(A)\ar[r]_{r_A.k}&A\ar@/_1pc/[l]_{\eta_A}\ar[r]&0
}$$
where $\eta$ is the unit of the monad $\mathbb T_G$, i.e. $\eta_A$ is the unique map $A\rightarrow T_G(A)$ such that $k.\eta_A=i_A$, which exists and is unique since  $r_G.i_A=0$. \end{lemma}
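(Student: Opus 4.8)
The plan is to build the map $j\colon (A|G)\to T_G(A)$ directly from the universal properties of the two kernels involved, and then verify exactness of the displayed sequence by a diagram chase using the elementary cross-effect properties of Proposition \ref{elcrprops} together with protomodularity of $\mathbb C$.

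\emph{Construction of $j$.} By definition $(A|G) = cr_2(Id_{\mathbb C})(A,G)$ is the kernel of $(r_A,r_G)^t\colon A+G\to A\times G$, with natural injection $\iota\colon (A|G)\mono A+G$, while $T_G(A)$ is the kernel of $r_G\colon A+G\to G$ with inclusion $k$. Since $r_G\,\iota = \pi_2\,(r_A,r_G)^t\,\iota = 0$, the universal property of the kernel $k$ produces a unique $j\colon (A|G)\to T_G(A)$ with $k\,j = \iota$. As $\iota$ is monic, so is $j$. Dually, since $r_G\,i_A = 0$, there is a unique $\eta_A\colon A\to T_G(A)$ with $k\,\eta_A = i_A$; naturality in $A$ and the monad-unit identification are then a routine check (or cited from \cite{BJ,BJK}).

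\emph{Exactness and splitting.} First, $j$ is a kernel of $r_A\,k\colon T_G(A)\to A$: if $u\colon Z\to T_G(A)$ satisfies $r_A\,k\,u = 0$, then $(r_A,r_G)^t\,k\,u = (r_A\,k\,u,\, r_G\,k\,u)^t = (0,0)^t = 0$, so $k\,u$ factors uniquely through $\iota$, hence $u$ factors uniquely through $j$ because $k$ is monic; thus $j = \ker(r_A\,k)$. Next $(r_A\,k)\,\eta_A = r_A\,i_A = 1_A$, so $r_A\,k$ is a (split, hence regular) epimorphism with section $\eta_A$. It remains to see that the sequence is \emph{short} exact, i.e. that $j$ is also the kernel part of a genuine short exact sequence — equivalently that $r_A\,k$ is the cokernel of $j$. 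This is where protomodularity of $\mathbb C$ (part of the homological hypothesis) enters: in a pointed protomodular category a split epimorphism together with the kernel of its underlying map forms a split short exact sequence, so $j = \ker(r_A\,k)$ and the existence of the splitting $\eta_A$ force $r_A\,k = \operatorname{coker}(j)$ automatically.

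\emph{Main obstacle.} The only genuinely non-formal point is this last exactness claim: showing that $(r_A\,k, j)$ assemble into a short exact sequence rather than merely a kernel–split-epi pair. In an abelian or semi-abelian setting this is immediate, but here we must invoke protomodularity explicitly (via the split short five lemma, or directly via the characterization of split extensions in protomodular categories from \cite{BB}). Everything else — the construction of $j$ and $\eta_A$, their uniqueness, the identities $k\,j=\iota$, $k\,\eta_A=i_A$, $r_A\,k\,\eta_A=1_A$, and naturality — is a direct unwinding of universal properties and requires no calculation beyond composing the defining projections $r_A$, $r_G$, $\pi_1$, $\pi_2$.
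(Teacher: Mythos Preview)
Your proof is correct and follows essentially the same route as the paper's: construct $j$ from $r_G\iota=0$ via the universal property of $k=\ker r_G$, then verify that $j$ is the kernel of $r_A k$ by the same diagram chase (if $r_A k u=0$ then also $r_G k u=0$, so $(r_A,r_G)^t k u=0$ and $ku$ factors through $\iota$). The paper's proof is simply terser and stops there, since once $j=\ker(r_A k)$ and $r_A k$ admits the section $\eta_A$, the sequence is split short exact by definition in a homological category; your labeling of this last step as the ``main obstacle'' requiring protomodularity is somewhat overstated --- a split epimorphism is automatically regular, and that together with $j=\ker(r_A k)$ is all that is meant by split short exact here.
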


\begin{proof}Since $r_G.\iota=\pi_G.b.\iota=0$, we get a unique $j:(A|G)\rightarrow T_G(A)$ such that $k.j=\iota$. It remains to show that $j$ is the kernel of $r_A.k$. First, $r_A.k.j=r_A\iota=\pi_A.b.\iota=0$. Second, let $f:X\rightarrow T_G(A)$ be such that $r_A.k.f=0$, then since one also obviously has $r_G.k.f=0$, one has $b.k.f=0$, so $kf$ factorizes through $(A|G)$, say $kf=\iota\bar{f}=k.j.\bar{f}$, hence $f=j.\bar{f}$ and we get a factorization of $f$ by $(A|G)$, which is unique since $j$ is a monomorphism.\end{proof}

\begin{cor}\label{restract}A map $\xi:T_G(A)\rightarrow A$ satisfying the unit axiom (i.e., $\xi \eta_A=1$) is uniquely determined by its restriction $\psi$ (along $j$) to $(A|G)$.\end{cor}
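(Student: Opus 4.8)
The plan is to read off the statement from the split short exact sequence of Lemma \ref{inclcrosseff} together with the protomodularity of $\mathbb C$. Since $\mathbb C$ is homological, it is in particular protomodular, so for the split epimorphism $r_A.k\colon T_G(A)\to A$ with chosen section $\eta_A$ and kernel $j\colon(A|G)\to T_G(A)$, the pair $(j,\eta_A)$ is jointly strongly epimorphic; equivalently, the induced map $<j,\eta_A>\,\colon (A|G)+A\to T_G(A)$ is a (regular, hence strong) epimorphism. This is the only structural input required, and it is exactly the assertion that a point is covered by its kernel together with its splitting.

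Granting this, suppose $\xi,\xi'\colon T_G(A)\to A$ both satisfy the unit axiom and have the same restriction along $j$, that is, $\xi\eta_A=\xi'\eta_A=1$ and $\xi j=\xi' j=\psi$. Precomposing with the cotupling gives $\xi\,{<}j,\eta_A{>}=\,{<}\psi,1{>}=\xi'\,{<}j,\eta_A{>}$, and since $<j,\eta_A>$ is an epimorphism we conclude $\xi=\xi'$. Thus a map $\xi$ satisfying the unit axiom is completely determined by $\psi=\xi j$, which is the claim.

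I expect essentially no obstacle here: the whole content is the protomodularity remark above, and the rest is a one-line cancellation of an epimorphism. If one prefers to avoid invoking jointly strongly epic cospans explicitly, the same conclusion can be obtained from the split short five lemma, or simply by noting that any morphism out of $T_G(A)$ is determined by its composites with $j$ and with $\eta_A$; in any of these formulations the argument is immediate.
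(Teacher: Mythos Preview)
Your proof is correct and essentially identical to the paper's: both use that by Lemma \ref{inclcrosseff} and protomodularity the pair $(j,\eta_A)$ is jointly (strongly) epimorphic, whence two maps $\xi,\xi'$ satisfying $\xi j=\xi' j$ and $\xi\eta_A=\xi'\eta_A=1$ must coincide.
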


\begin{proof}Consider two maps $\xi$ and $\xi' : T_G(A)\rightarrow A$ satisfying this unit axiom, i.e. $\xi.\eta_A:\xi'.\eta_A=1_A$ having same restriction $\psi$ to $(A|G)$, i.e. $\psi=\xi.j=\xi'.j$. Then $\xi=\xi'$ since by Lemma \ref{inclcrosseff} and by protomodularity the pair $(\eta,j)$ is (strongly) epimorphic.\end{proof}

Corollary \ref{restract} explains why we will characterize our actions in terms of maps $(A|G)\rightarrow A$ rather than $T_G(A)\rightarrow A$.

Recall that when $\xi:T_G(A)\rightarrow A$ is an action of $G$ on $A$ (in the sense of \cite{BJK}), the semi-direct product of $A$ and $G$ along $\xi$  is  the coequalizer of $<k,i_G>$ and $\xi + 1:T_G(A)+G \to A+G$  (as defined in \cite{BJK}, but using our terminology and putting the $G$'s on the right). This definition arises naturally from the general theory of monads, but it is clear that this coequalizer is also the coequalizer of $k$ and $i_A\xi$. Here again, we show that the knowledge of the restriction $\psi:(A|G)\rightarrow G$ of $\xi$ to $(A|G)$ along $j:(A|G)\rightarrow T_G(A)$ suffices to determine this coequalizer:

\begin{prop}\label{coequalizer}Consider a map $\xi:T_G(A)\rightarrow A$ satisfying the unit axiom of a $\mathbb T_G$-algebra, and consider $\psi=\xi.j:(A|G)\rightarrow A$. Then the coequalizer of $k$ and $i_A\xi$ (hence the semi-direct product of $A$ and $G$ along $\xi$, if moreover $\xi$ is a $\mathbb T_G$-algebra, considering the observation here above) is also the coequaliser of $\iota$ (= $k.j$) and $i_A.\psi$.
\end{prop}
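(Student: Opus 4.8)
The plan is to exhibit the two coequalizers as solutions to essentially the same universal problem, by showing that a map $g$ out of $A+G$ coequalizes the pair $(k, i_A\xi)$ if and only if it coequalizes the pair $(\iota, i_A\psi)$; the result then follows by uniqueness of coequalizers. Since $\iota = k.j$ and $\psi = \xi.j$, one implication is immediate: if $g.k = g.i_A.\xi$, then precomposing with $j$ gives $g.\iota = g.k.j = g.i_A.\xi.j = g.i_A.\psi$. So the content is the converse: assuming $g.\iota = g.i_A.\psi$, deduce $g.k = g.i_A.\xi$.

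For the converse I would use Lemma \ref{inclcrosseff}, which says that the pair $(\eta_A, j)$ is jointly (strongly) epimorphic, so it suffices to check that $g.k$ and $g.i_A.\xi$ agree after precomposition with both $\eta_A$ and $j$. Precomposition with $j$ gives exactly the hypothesis $g.\iota = g.i_A.\psi$ rewritten via $\iota = k.j$ and $\psi = \xi.j$. Precomposition with $\eta_A$ requires $g.k.\eta_A = g.i_A.\xi.\eta_A$; here $k.\eta_A = i_A$ by definition of $\eta_A$ (as recalled in Lemma \ref{inclcrosseff}), and $\xi.\eta_A = 1_A$ is precisely the unit axiom that $\xi$ is assumed to satisfy, so both sides equal $g.i_A$. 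Hence $g.k = g.i_A.\xi$.

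Having established that the two pairs have the same coequalizing maps, I conclude that a coequalizer of $(k, i_A\xi)$ is a coequalizer of $(\iota, i_A\psi)$ and conversely, with the same universal map; in particular the canonical map $A+G \to Q$ onto either coequalizer serves for the other. Finally I recall the observation already made before the statement, that the coequalizer of $<k,i_G>$ and $\xi+1$ coincides with the coequalizer of $k$ and $i_A\xi$, so when $\xi$ is a full $\mathbb T_G$-algebra this common object is the semi-direct product of $A$ and $G$ along $\xi$; this yields the parenthetical part of the claim.

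The only mild subtlety — and the one step I would be careful about — is the use of the joint strong epimorphicity of $(\eta_A,j)$ to cancel on the left: this is legitimate because in a protomodular (hence homological) category a jointly strongly epimorphic pair is in particular jointly epimorphic, so two parallel maps agreeing after precomposition with each of $\eta_A$ and $j$ are equal. Everything else is a formal manipulation of the identities $\iota=k.j$, $\psi=\xi.j$, $k.\eta_A=i_A$ and $\xi.\eta_A=1_A$.
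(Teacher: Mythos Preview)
Your proposal is correct and follows exactly the same approach as the paper's own proof: both reduce the statement to showing that a map $h:A+G\to X$ coequalizes $(k,i_A\xi)$ if and only if it coequalizes $(\iota,i_A\psi)$, observe that the forward direction is immediate by precomposing with $j$, and obtain the converse from the fact that the pair $(j,\eta_A)$ is jointly (strongly) epimorphic together with the identities $k.\eta_A=i_A$ and $\xi.\eta_A=1_A$. The paper's proof is simply a terser version of what you wrote.
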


\begin{proof}We have to show that for any map $h:A+G\rightarrow X$ one has : $hk=hi_A\xi$ if and only if $hkj=hi_A\psi=h i_A\xi.j$. And of course only the sufficient condition must be proved; but it follows immediately from the fact that the pair $(j,\eta)$ is (strongly) epimorphic by Lemma \ref{inclcrosseff}.
\end{proof}

The following proposition underlines the key role of the object $(A|G)$ :

\begin{prop}\label{coeq}Let  $G$ and $A$ be objects of $\mathbb C$. Consider a morphism $\psi:(A|G)\rightarrow A$, and let $q_{\psi}:A+ G\rightarrow Q_{\psi}$ be the coequalizer of $\iota$ and $i_1.\psi$. Let $l_{\psi}$ be the composite $q_{\psi}.i_A$. Then:

1) $r_G$ coequalizes $\iota_{A,G}$ and $r_A.\psi$, giving rise to a unique extension $p_{\psi}:Q_{\psi}\rightarrow G$, such that $p_{\psi}q_{\psi}=r_G$;

2) The morphism $s_{\psi}=q_{\psi}.i_G: G\rightarrow Q_{\psi}$ is a section of $p_{\psi}$;

3) The sequence $\xymatrix{A\ar^{l_{\psi}}[r]&Q_{\psi}\ar^{p_{\psi}}[r]&G}$ is exact, hence the sequence $$\xymatrix{0\ar[r]&A\ar[r]^{l_{\psi}}&Q_{\psi}\ar[r]_{p_{\psi}}&G\ar@/_1pc/[l]_{s_{\psi}}\ar[r]&0}$$ is split short exact if and only if $q_{\psi}.i_A$ is a monomorphism. 
\end{prop}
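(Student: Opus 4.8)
The plan is to work entirely inside the homological category $\mathbb C$, exploiting the explicit description of $Q_\psi$ as a coequalizer together with the exactness properties available in $\mathbb C$ (regularity, protomodularity, the snake lemma). Throughout I will use the natural injection $\iota = \iota_{A,G}\colon (A|G)\mono A+G$ from Proposition \ref{elcrprops}(1), and I will freely use that $b\,\iota=0$, i.e. $r_A\iota=0$ and $r_G\iota=0$.

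\medskip\noindent\textbf{Part 1.} To produce $p_\psi$ I must check that $r_G\colon A+G\to G$ coequalizes $\iota$ and $i_A\psi$. On one side $r_G\iota=0$ (since $\iota$ factors through the kernel $(A|G)$ of $b$, and $r_G = \pi_G b$); on the other side $r_G i_A\psi = 0\cdot\psi = 0$ because $r_G i_A=0$. Hence $r_G$ factors (uniquely, since $q_\psi$ is an epimorphism) through $q_\psi$, giving $p_\psi$ with $p_\psi q_\psi = r_G$. Here I use ``$i_1$'' $=i_A$ in the paper's notation.

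\medskip\noindent\textbf{Part 2.} Since $s_\psi = q_\psi i_G$, I compute $p_\psi s_\psi = p_\psi q_\psi i_G = r_G i_G = 1_G$, so $s_\psi$ is a section of $p_\psi$. This is immediate.

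\medskip\noindent\textbf{Part 3.} This is the substantive part. First I must show that $A\mr{l_\psi} Q_\psi \mr{p_\psi} G$ is exact, meaning $l_\psi$ is (up to the image factorization) the kernel of $p_\psi$ in the sense appropriate to a homological category: more precisely that the regular image of $l_\psi$ is the kernel of $p_\psi$. The idea is to identify $p_\psi$ as a \emph{regular epimorphism} whose kernel is exactly the image of $l_\psi$. That $p_\psi$ is a regular epi follows because $r_G = p_\psi q_\psi$ is a (split, hence regular) epimorphism, so $p_\psi$ is a regular epi by the cancellation property of regular epis in a regular category. For the kernel computation I would set up the diagram
\[
\xymatrix{
(A|G) \ar@{^(->}[r]^-{\iota}\ar[d] & A+G \ar@<-2pt>[r]_-{r_G}\ar[d]^{q_\psi} & G \ar@{=}[d]\ar@/_1pc/@<-2pt>[l]_{i_G} \\
A \ar[r]^-{l_\psi} & Q_\psi \ar@<-2pt>[r]_-{p_\psi} & G \ar@/_1pc/@<-2pt>[l]_{s_\psi}
}
\]
and argue as follows. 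The kernel of $r_G\colon A+G\to G$ is exactly $T_G(A)$, with inclusion $k\colon T_G(A)\mono A+G$; and by Lemma \ref{inclcrosseff} the pair $(\eta_A, j)$ is (strongly) epimorphic, i.e. $T_G(A)$ is the ``join'' of the images of $i_A$ (through $\eta_A$) and of $\iota$ (through $j$), since $k\eta_A = i_A$ and $kj=\iota$. Now $q_\psi$ coequalizes $\iota$ and $i_A\psi$, so on $T_G(A)$ the composite $q_\psi k$ kills $\iota$ up to $l_\psi\psi$ and sends $i_A$ to $l_\psi$; hence $q_\psi k$ factors through $l_\psi$, i.e. $\mathrm{Ker}(p_\psi) = q_\psi(\mathrm{Ker}(r_G))$ is contained in the image of $l_\psi$. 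Conversely $p_\psi l_\psi = p_\psi q_\psi i_A = r_G i_A = 0$, so the image of $l_\psi$ is contained in $\mathrm{Ker}(p_\psi)$. Combining, the image of $l_\psi$ \emph{equals} $\mathrm{Ker}(p_\psi)$, which is the asserted exactness of $A\mr{l_\psi}Q_\psi\mr{p_\psi}G$; together with Parts 1 and 2 this makes the displayed sequence a split short \emph{right} exact sequence, and it is short exact precisely when $l_\psi = q_\psi i_A$ is in addition a monomorphism (equivalently, when the canonical map from $A$ onto the image of $l_\psi$ is an isomorphism). To make the ``contained in the image'' step rigorous I will take the regular image factorization $l_\psi = m\,e$ with $m$ a monomorphism and $e$ a regular epi, and check that $m$ satisfies the universal property of $\mathrm{Ker}(p_\psi)$ using that $q_\psi k$ factors through it and $q_\psi$ is a regular epi onto $Q_\psi$; protomodularity of $\mathbb C$ then upgrades the epimorphic pair statement as needed.

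\medskip The main obstacle I anticipate is precisely this last identification: showing that the image of $l_\psi$ is \emph{all} of $\mathrm{Ker}(p_\psi)$ and not just contained in it. The clean way around it is to reduce everything to the kernel $T_G(A)$ of $r_G$ and the covering $(\eta_A,j)$ of $T_G(A)$ from Lemma \ref{inclcrosseff}: since $q_\psi$ restricted to $T_G(A)$ is $q_\psi k$, and this factors through $l_\psi$ by the coequalizer property, a diagram chase (legitimate in a homological category via the embedding/metabelian calculus, or directly via the snake lemma applied to the two rows above) gives that $q_\psi k$ is a regular epi onto $\mathrm{Ker}(p_\psi)$, forcing $\mathrm{Ker}(p_\psi)=\mathrm{im}(l_\psi)$. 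The equivalence in (3) is then formal: a split right exact sequence with $l_\psi$ mono is split short exact, and conversely.
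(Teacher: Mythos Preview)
Your argument is correct and follows essentially the same route as the paper: Parts 1 and 2 are identical, and for Part 3 both proofs show that the restriction of $q_\psi$ to $T_G(A)=\mathrm{Ker}(r_G)$ is a regular epimorphism onto $\mathrm{Ker}(p_\psi)$ (the paper via Noether's first isomorphism theorem and an explicit $3\times 3$ diagram, you via the snake lemma), and then use the protomodular covering $\langle j,\eta_A\rangle\colon (A|G)+A\twoheadrightarrow T_G(A)$ from Lemma \ref{inclcrosseff} together with the identity $q_\psi k\langle j,\eta_A\rangle = l_\psi\langle\psi,1_A\rangle$ to conclude that $\mathrm{Ker}(p_\psi)$ coincides with the image of $l_\psi$. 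The only imprecision is your phrase ``$q_\psi k$ factors through $l_\psi$'': since $l_\psi$ is not yet known to be monic this should read ``factors through the regular image of $l_\psi$'' (which you yourself note in the final paragraph), and then Lemma \ref{factorlem} applied to the square with $\langle j,\eta_A\rangle$ regular epi and $m$ mono makes that step rigorous.
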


\begin{proof}

1. One has:  $r_G.\iota = \pi_G.b_{A,G}.\iota=0$ and $r_G.i_A.\psi=0.\psi=0$.

2) $p_{\psi}.s_{\psi}=p_{\psi}q_{\psi}.i_G=r_g.i_G=1_G$.

3) Let $k':\mbox{Ker}(q_{\psi})\rightarrow A+G$ be the kernel of $q_{\psi}$. One has: $r_G.k'=p_{\psi}. q_{\psi}.k=0$, hence $\mbox{Ker}(q_{\psi})$ is a subobject of $T_G(A)$. So by Noether's first isomorphism theorem, it is proper, and $$\frac{\frac{A+ G}{\mbox{\tiny Ker}(q_{\psi})}}{\frac{T_G(A)}{\mbox{\tiny Ker}(q_{\psi})}}=\frac{A+ G}{T_G(A)}=G$$ 
meaning more precisely that we have the following diagram, where all sequences are short exact: 

$$\xymatrix{
&0\ar[d]&0\ar[d]&0\ar[d]\\
0\ar[r]&\mbox{Ker}(q_{\psi})\ar@{=}[d]\ar[r]&T_G(A)\ar[d]^k\ar[r]^{q'}&\mbox{Ker}(p_{\psi})\ar[r]\ar[d]^{k''}&0\\
0\ar[r]&\mbox{Ker}(q_{\psi})\ar[d]\ar[r]^{k'}&A+G\ar[d]^{r_G}\ar[r]^{q_{\psi}}&Q_{\psi}\ar[r]\ar[d]^{p_{\psi}}&0\\
0\ar[r]&0\ar[d]\ar[r]&G\ar[d]\ar@{=}[r]&G\ar[d]\ar[r]&0\\
&0&0&0}$$

Now one has: $p_{\psi}.q_{\psi}.i_A=r_G.i_A=0$, so $q_{\psi}.i_A$ factorizes by $\mbox{Ker}(p_{\psi})$, say $q_{\psi}.i_A=k''.v$ for some unique $v:A\rightarrow\mbox{Ker}(p_{\psi})$. So it remains to show that $v$ is a regular epimorphism to get the result.

By Lemma \ref{inclcrosseff} and by protomodularity of $\mathbb C$, the map $<j,\eta_A>:(A|G)+A\rightarrow T_G(A)$ is a regular epimorphism, and so is $q'$, hence so is $q'.<j,\eta_A>$. It suffices then to  show that $q'.<i,\eta_A>=v.<\psi,1_A>$, which will show that $v$ is a regular epimorphism as required. Of course, it suffices to show that $k''.q'.<j,\eta_A>=k''v.<\psi,1_A>$, or equivalently that $q_{\psi}.k.<j,\eta_A>=q_{\psi}.i_A<\psi,1_A>$. By preceeding by the inclusions of $(A|G)$ and $A$ in their sum, this amounts to show that $q_{\psi}.k.j=q_{\psi}.i_A.\psi$ and that $q_{\psi}.k.\eta_A=q_{\psi}.i_A.1_A=q_{\psi}.i_A$. First, $q_{\psi}.k.i=q_{\psi}.\iota=q_{\psi}.i_A.\psi$ by the very definition of $q_{\psi}$. Second, $k.\eta_A=i_A$, which gives the result.

\end{proof}

\noindent{\bf Example: }Let $\mathbb C$ be the category of groups. In order to simplify notations here, we will feel free to consider the inclusions of $A$ and $G$ in their sum as set inclusions, so that for instance, for $a\in A$ and $g\in G$ the notation $[g,a]=gag^{-1}a^{-1}$ denotes unambiguously the commutator of $g$ and $a$, seen as elements of $A+G$.  It is well known (see for instance \cite{MKS}) that for two groups $A, G$, the cross effect $(A|G)$ is freely generated by the elements $[g,a]$, $(g,a)\in G^*\times A^*$, where $X^*$ denotes the set of nonzero elements of the group $X$. More formally, for a set $S$ denote by $\mathcal F(S)$
the free group with basis $S$. Then the homomorphism $\mathcal F(G^*\times A^*)\to A+G$ sending a basis element $(g,a)$ to $[g,a]$ maps isomorphically onto $(A|G)$\footnote{Of course, it would seem more natural to present this isomorphism, obviously equivalently, as an isomorphism $\mathcal F(A^*\times G^*)\rightarrow (A|G)$. But for technical reasons which will appear clearly, $\mathcal F(G^*\times A^*)\rightarrow (A|G)$ is more convenient.}. 
%
Indeed, the fact that this map maps surjectively onto $(A|G)$ can be easily seen as follows: any element of $A+G$ can be written as a product of the form $\left(\prod_{i=1}^n[g_i,a_i]^{z_i}\right).a.g$, where the $z_i$'s are integers, $g$ and the $g_i$'s are in $G$ and $a$ and the $a_i$'s are in $A$. Moreover, $b_{A,G}\left(\left(\prod_{i=1}^n[g_i,a_i]^{z_i}\right).a.g\right)=(a,g)$, showing the uniqueness of $a$ and $g$ and showing that the kernel of $b_{A,G}$ is the set of elements of $A+G$ having the form $\prod_{i=1}^n[g_i,a_i]^{z_i}$, hence that we get an epimorphism $\mathcal F(G^*\times A^*)\rightarrow (A|G)$. That this epimorphism is an isomorphism is less easy to show.

So a map $\psi:(A|G)\rightarrow A$ can be seen as a set-theoretic application $G^*\times A^*\rightarrow A$. The link with usual actions is the following. Let $\phi: G\times A\rightarrow A$ be an action in the usual sense. Then one also may consider $\phi': G^*\times A^*\rightarrow A$ defined by $\phi'(g,a)=\phi(g,a).a^{-1}$, and $\psi:\mathcal F(G^*\times A^*)=(A|G)\rightarrow A$ its extension as a group morphism. Then it is easy to see that the coequalizer of $i_A.\psi$ and $\iota$ is the canonical epimorphism from $A+G$ to the usual semi-direct product $A\rtimes_{\phi}G$. Conversely, if $\psi:\mathcal F(G^*\times A^*)=(A|G)\rightarrow A$ is such that $q_{\psi}.i_A$ (as constructed above) is a monomorphism, then it gives rise to a split extension $\xymatrix{0\ar[r]&A\ar[r]^{q_{\psi}.i_A}&Q_{\psi}\ar[r]_{p_{\psi}}&G\ar@/_/[l]_{s_{\psi}}\ar[r]&0}$, hence to an action $\phi:G\times A\rightarrow A$ in the usual sense. Then it can be shown that for nonzero $g\in G$ and $a\in A$ one has $\psi(g,a)=\phi(g,a).a^{-1}=\phi'(g,a)$. The resulting equivalence between maps $\phi$ or $\phi'$ and suitable homomorphisms $\psi$ as above motivates the following definition for any finitely cocomplete homological category:

\begin{defi}\label{action} An {\em action} (of an object $G$ on an object $A$) is the data of two objects $A$ and $G$ and of a map $\psi:(A|G)\rightarrow A$ such that $q_{\psi}.i_A$ is a monomorphism. A map of actions: $(A,G,\psi)\rightarrow(A',G',\psi')$ is an ordered pair $(a,g)$ where $a:A\rightarrow A'$ and $g:G\rightarrow G'$ are maps in $\mathbb C$ making the following diagram commute:
$$\xymatrix{
(A|G)\ar[r]^{(a|g)}\ar[d]_{\psi}&(A'|G')\ar[d]_{\psi'}\\
A\ar[r]^a&A'
}$$

If $(A,G,\psi)$ is an action, then we denote by $A\rtimes_{\psi}G$ the quotient $Q_{\psi}$ as defined above.
\end{defi}

This obviously defines a category which we denote by $\mbox{Act}({\mathbb C})$. It is equipped with a forgetful functor to $\mathbb C$, sending the object $(A,G,\psi)$ to the object $G$, and the map $(a,g)$ to $g$. 
This makes of $\mbox{Act}(\mathbb C)$ a fibration on $\mathbb C$ (as is easily shown, but also as a consequence of Proposition \ref{equivact} below). The fiber on $G$ of this fibration is denoted by $\mbox{Act}_G(\mathbb C)$.

As can be expected, this notion of semi-direct product satisfies a universal property (see Proposition \ref{PU} below for another version which more directly generalizes  the usual one in the category of groups):

\begin{prop}\label{firstunivcondact}
Let $C$ be an object of $\mathbb C$ and $f_A: A\rightarrow C$, $f_G:G\rightarrow C$ be two morphisms such that the following diagram commutes:
$$\xymatrix{(A|G)\ar[r]^{\iota_{A,G}}\ar[d]_{\psi}&A+ G\ar[d]^{<f_A,f_G>}\\
A\ar[r]_{f_A}&C}$$
Then there exists a unique $f:A\rtimes_{\psi}G\rightarrow C$ such that $f.l_{\psi}=f_A$ and $f.s_{\psi}=f_g$:

$$\xymatrix{
A\ar@/^1pc/[drr]^{f_A}\ar[dr]_{l_{\psi}}\\
&A\rtimes_{\psi}G\ar@{.>}[r]^{\exists !f}&C\\
G\ar[ur]^{s_{\psi}}\ar@/_1pc/[rru]_{f_G}}$$

\end{prop}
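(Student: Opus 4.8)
The plan is to deduce the universal property of $A\rtimes_\psi G = Q_\psi$ directly from its construction as a coequalizer, using Proposition \ref{coeq} to identify the maps $l_\psi$ and $s_\psi$ with the relevant composites. Recall that $q_\psi\colon A+G\to Q_\psi$ is by definition the coequalizer of $\iota_{A,G}$ and $i_A\psi$, that $l_\psi = q_\psi i_A$, and that $s_\psi = q_\psi i_G$.

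First I would observe that the commutativity of the given square, $f_A\,\psi = \langle f_A,f_G\rangle\,\iota_{A,G}$, is precisely the statement that the map $\langle f_A,f_G\rangle\colon A+G\to C$ coequalizes $i_A\psi$ and $\iota_{A,G}$: indeed $\langle f_A,f_G\rangle\, i_A\psi = f_A\psi$, and the square says this equals $\langle f_A,f_G\rangle\,\iota_{A,G}$. By the universal property of the coequalizer $q_\psi$, there is therefore a unique map $f\colon Q_\psi\to C$ with $f\,q_\psi = \langle f_A,f_G\rangle$. Composing on the right with $i_A$ and $i_G$ gives $f\,l_\psi = f\,q_\psi\,i_A = \langle f_A,f_G\rangle\,i_A = f_A$ and $f\,s_\psi = f\,q_\psi\,i_G = \langle f_A,f_G\rangle\,i_G = f_G$, so $f$ satisfies the two required equations.

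For uniqueness, suppose $f'\colon Q_\psi\to C$ also satisfies $f'\,l_\psi = f_A$ and $f'\,s_\psi = f_G$. Then $f'\,q_\psi\,i_A = f_A = f\,q_\psi\,i_A$ and $f'\,q_\psi\,i_G = f_G = f\,q_\psi\,i_G$, so $f'\,q_\psi$ and $f\,q_\psi$ agree after precomposition with both coproduct inclusions $i_A$ and $i_G$; since these are jointly epimorphic, $f'\,q_\psi = f\,q_\psi$, and since $q_\psi$ is an epimorphism (being a coequalizer), $f' = f$.

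This argument is essentially formal once the coequalizer description is in hand, so I do not anticipate a genuine obstacle; the only point requiring a little care is the bookkeeping identifying the hypothesis with the coequalizing condition — in particular making sure the square in the statement is read with $\iota_{A,G}$ on top and $i_A\psi$ as the other parallel arrow, matching exactly the pair coequalized by $q_\psi$ in Proposition \ref{coeq}. (One could alternatively phrase everything through the equivalent coequalizer of $k$ and $i_A\xi$ via that proposition, but working directly with $\iota$ and $i_A\psi$ is cleaner here.)
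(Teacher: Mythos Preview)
Your proof is correct and follows essentially the same route as the paper's: both use that the hypothesis says $\langle f_A,f_G\rangle$ coequalizes $\iota_{A,G}$ and $i_A\psi$, then invoke the universal property of the coequalizer $q_\psi$ and check the two triangles. The only difference is in the uniqueness step: the paper appeals to protomodularity to conclude that $(l_\psi,s_\psi)$ is strongly epimorphic, whereas you argue more directly that $q_\psi$ is epi and $(i_A,i_G)$ are jointly epi --- your version is slightly more elementary and works in any pointed category with the relevant coequalizer, not just a protomodular one.
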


\begin{proof}
Consider the map $<f_A,f_G>:A+ G\rightarrow C$. One has: $<f_A,f_G>.\iota_{A,G}=f_A.\psi=<f_A,f_G>.i_A.\psi$. So, since $A\rtimes_{\psi}G$ is the coequalizer of $\iota_{A,G}$ and $i_A.\psi$, there exists a unique $f:A\rtimes_{\psi}G\rightarrow C$ such that $f.q_{\psi}=<f_A,f_G>$:
$$\xymatrix{
(A|G)\ar[r]^{\iota_{A,G}}\ar[d]_{\psi}&A+ G\ar[d]^{q_{\psi}}\ar@/^1pc/[dr]^{<f_A,f_G>}\\
A\ar[ru]^{i_A}\ar[r]_{l_{\psi}}&A\rtimes_{\psi}G\ar@{.>}[r]_{\exists !f}&C}$$
Then $f.l_{\psi}=f.q_{\psi}.i_A=<f_A,f_G>.i_A=f_A$, and $f.s_{\psi}=f.q_{\psi}.i_G=<f_G,i_G>.i_G=f_G$. And since $\mathbb C$ is protomodular, the family $(l_{\psi},s_{\psi})$ is (strongly) epimorphic, which ensures uniqueness of $f$ with the required property.
\end{proof}

We may now compare our category of actions to the category of Eilenberg-Moore algebras on $\mathbb T_G$, for fixed $G$:

\begin{prop} \label{compar}Any action $\psi:(A|G)\rightarrow A$ extends uniquely to an Eilenberg-Moore algebra $\xi:T_G(A)\rightarrow A$ ; this gives rise to a full and faithful functor $\Xi_G:\mbox{Act}_G(\mathbb C)\rightarrow\mathbb C^{\mathbb T_G}$. Moreover, $\Xi_G$ is ``injective on objects'' so that if $\mathbb X_G$ is the full subcategory of the objects of $\mathbb C^{\mathbb T_G}$ which are images by $\Xi_G$ of objects $\psi$ of  $\mbox{Act}_G(\mathbb C)$, then $\Xi_G$ is an isomorphism of categories between $\mbox{Act}_G(\mathbb C)$ and $\mathbb X_G$.
\end{prop}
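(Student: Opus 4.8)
The plan is to build the functor $\Xi_G$ directly from the results already established, and then check full faithfulness and injectivity on objects by hand. First I would address existence and uniqueness of the extension. Given an action $\psi:(A|G)\to A$, Corollary \ref{restract} already tells us that at most one map $\xi:T_G(A)\to A$ satisfying the unit axiom restricts to $\psi$ along $j$, so uniqueness is for free. For existence I would produce $\xi$ explicitly: since by Lemma \ref{inclcrosseff} the pair $(j,\eta_A)$ is strongly epimorphic and $T_G(A)$ sits inside $A+G$, one can use the split short exact sequence of Lemma \ref{inclcrosseff} together with protomodularity to define $\xi$ as the unique map with $\xi j=\psi$ and $\xi\eta_A=1_A$; that such a map exists needs a small diagram argument, e.g. exhibiting $\xi$ as induced on the quotient $T_G(A)$ of $(A|G)+A$ by comparing the relations. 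The delicate point is then to verify that this $\xi$ satisfies the \emph{associativity} axiom of a $\mathbb T_G$-algebra; here I expect one must use that $q_\psi.i_A$ is a monomorphism, i.e. that $\psi$ is genuinely an action and not just an arbitrary map — this is exactly where the defining condition of Definition \ref{action} enters, presumably via Proposition \ref{coequalizer} and Proposition \ref{coeq}, identifying $A\rtimes_\psi G$ with the semi-direct product attached to $\xi$ and reading off associativity from the universal property in Proposition \ref{firstunivcondact}.

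Next I would define $\Xi_G$ on morphisms. A map of actions $(a,1_G):(A,G,\psi)\to(A',G,\psi')$ is, by Definition \ref{action}, a commuting square relating $\psi$, $\psi'$ and $(a|1_G)$. I claim the same $a$ is a morphism of $\mathbb T_G$-algebras $\xi\to\xi'$: one checks $a\xi=\xi'T_G(a)$ by precomposing with the strongly epimorphic pair $(j,\eta_A)$ — on the $j$-component it reduces to the given square for $\psi,\psi'$ together with naturality of $j$, and on the $\eta_A$-component it reduces to naturality of $\eta$ and the unit axioms $\xi\eta_A=1$, $\xi'\eta_{A'}=1$. Functoriality of $\Xi_G$ is then immediate. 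Faithfulness is clear since $\Xi_G$ does not change the underlying map $a$. For fullness, given a $\mathbb T_G$-algebra morphism $a:\xi\to\xi'$ between objects in the image, set $\psi=\xi j$, $\psi'=\xi' j$; then $a\psi=a\xi j=\xi' T_G(a) j=\xi' j (a|1_G)=\psi'(a|1_G)$, using naturality of $j$, so $(a,1_G)$ is a map of actions mapping to $a$.

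Finally, injectivity on objects: if $\Xi_G(\psi)=\Xi_G(\psi')$ then in particular $\xi=\xi'$ as maps $T_G(A)\to A$ (same object on the same $A$), hence $\psi=\xi j=\xi' j=\psi'$, and the two actions coincide. Combined with full faithfulness this gives that $\Xi_G$ corestricts to an isomorphism of categories onto the full subcategory $\mathbb X_G$ of its image, which is the last assertion.

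I expect the main obstacle to be the existence half of the first sentence: constructing $\xi$ from $\psi$ and, above all, verifying the associativity axiom of a $\mathbb T_G$-algebra. Everything else (uniqueness, functoriality, faithfulness, fullness, injectivity on objects) is formal manipulation with the strongly epimorphic pair $(j,\eta_A)$ and naturality. The associativity check is where the hypothesis that $q_\psi.i_A$ be monic is genuinely used, and where one must be careful about how $T_G$ acts on the relevant sums and how $cr_2$ interacts with $T_G(A)+G$; this is presumably the step the authors streamline using the cross-effect calculus of Section 1 (Propositions \ref{cr-reg-epi} and \ref{crsdp}) rather than a bare-hands diagram chase.
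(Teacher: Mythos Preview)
Your treatment of morphisms, fullness, faithfulness, and injectivity on objects is correct and essentially identical to the paper's. The divergence is in how you produce $\xi$ and certify the algebra axioms.

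You propose to build $\xi$ directly from the split exact sequence of Lemma \ref{inclcrosseff} (so that $\xi j=\psi$ and $\xi\eta_A=1_A$) and then to verify associativity by hand, speculating that Propositions \ref{cr-reg-epi} and \ref{crsdp} will be needed. The paper takes a shorter and more conceptual route: since $\psi$ is an action, $l_\psi=q_\psi i_A$ is the kernel of $p_\psi$ by Proposition \ref{coeq}, so from $p_\psi q_\psi k=r_G k=0$ one obtains $\xi$ as the \emph{unique} map with $l_\psi\xi=q_\psi k$. One then observes that this $\xi$ is literally $R(\epsilon_{A\rtimes_\psi G})$, the value of the comparison functor $\mathcal J_G:\mbox{Pt}_G(\mathbb C)\to\mathbb C^{\mathbb T_G}$ on the point $(A\rtimes_\psi G,p_\psi,s_\psi)$. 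General monad theory then gives both the unit and the associativity axioms for free; no diagram chase for associativity is required, and the cross-effect calculus of Section 1 plays no role here (it is used only much later, in Section 4).

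Your route is not wrong, but it is harder: the ``small diagram argument'' you allude to for existence of $\xi$ is exactly the compatibility condition of Proposition \ref{extunit} (diagram (\ref{extofpsidia})), which is not entirely trivial to verify from scratch, and a direct check of associativity would amount to the analysis carried out in Proposition \ref{condass} and beyond. The paper's construction via the point sidesteps both issues at once, and this is precisely where the hypothesis that $q_\psi i_A$ be monic is used: it makes $l_\psi$ a kernel, enabling the factorization that defines $\xi$.
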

\begin{proof}
If $\psi$ is an action in our sense, then we may construct $q_{\psi}:A+G\rightarrow A\rtimes_{\psi}G$ the coequalizer of $\iota_{A,G}$ and $i_A.\psi$, and $p_{\psi}:A\rtimes_{\psi}G\rightarrow G$ like in Proposition \ref{coeq}. Then since $\psi$ is an action, $l_{\psi}=q_{\psi}.i_{\psi}$ is the kernel of $p_{\psi}$. 
One has $p_{\psi}.q_{\psi}.k=r_G.k=0$, hence there exists a unique $\xi:T_G(A)\rightarrow A$ such that $q_{\psi}.k=l_{\psi}.\xi$. One has $\xi.j_{A,G}=\psi$, since $l_{\psi}\psi=q_{\psi}i_A\psi=q_{\psi}\iota=q_{\psi}kj=l_{\psi}\xi$ and $l_{\psi}$ is a monomorphism. We put $ \Xi_G(\psi)=(A,\xi)$. If $\psi':(A'|G)\rightarrow A'$ is another $G$-action, and $a:A\rightarrow A'$ a map in $\mathbb C$, then the diagram
$$\xymatrix{T_GA\ar[d]_{\xi}\ar[r]^{T_Ga}&T_GA'\ar[d]^{\xi'}\\
A\ar[r]_a&A'}$$commutes if and only if $a$ is a map of actions, again since $(\eta,j)$ is a (strongly) epimorphic pair. This shows that if one can ensure that $\xi$ and $\xi'$ are Eilenberg-Moore algebras over $\mathbb T_G$ and $a$ is a map of actions, then $a$ also is a morphism of algebras. We thus obtain a full and faithfull functor $\Xi_G$ : $\mbox{Act}_G(\mathbb C)\rightarrow\mathbb C^{\mathbb T_G}$, defined on objects as above, and on maps by $\Xi_G(a)=a$.
Now it is easy to see that $\xi$ is nothing but the pair $(R(A\rtimes_{\psi}G), R(\epsilon_{A\rtimes_{\psi}G}))$, where $R$ is the right adjoint of the adjunction between $\mbox{Pt}_G(\mathbb C)$ and $\mathbb C$ giving rise to the monad $\mathbb T_G$, i.e. $R$ is the kernel functor, and $\epsilon$ is its counit. 
But this is precisely the construction of the comparison functor $\mathcal J_G :\mbox{Pt}_G(\mathbb C)\rightarrow \mathbb C^{\mathbb T_G}$ of the adjunction, and in particular it takes values in $\mathbb C^{\mathbb T_G}$.
Note also that $\xi.j=\psi$ (since it is true, followed by the mono $l_{\psi}$), showing that $\Xi_G$ is ``injective on objects''.

\end{proof}

\begin{prop}\label{equivact}
If $\psi:(A|G)\rightarrow A$ is an action, then the construction of  $A\rtimes_{\psi}G$ coincides with the one of $G\ltimes (A,\Xi_G(\psi))$ in \cite{BJK}, hence is functorial. The comparison adjunction $(F',G',\eta',\epsilon') : \mathbb C^{\mathbb T_G}\rightarrow \mbox{Pt}_G(\mathbb C)$  in \cite{BJK} (where $F'$ essentially is the semi-direct product functor and $G'$ is the functor $\mathcal J_G$ above) restricts to an equivalence between $\mathbb X_G$ and $\mbox{Pt}_G(\mathbb C)$. Hence precomposition with $\Xi_G$ provides an equivalence of categories between  $\mbox{Act}(\mathbb C)$ and $\mbox{Pt}(\mathbb C)$, which is compatible with the forgetful functors, so that $\mbox{Act}(\mathcal C)$ is a fibration whose fibres are the categories $\mbox{Act}_G(\mathbb C)$, and the ``inverse'' functors $\Psi_G:\mbox{Pt}_G(\mathbb C)\rightarrow \mbox{Act}_G(\mathbb C)$ are such that $\Xi_G.\Psi_G=\mathcal J_G$, the comparison functor of the adjunction. .
\end{prop}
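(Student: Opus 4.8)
The plan is to organize everything around the functor $\Xi_G$ already constructed in Proposition \ref{compar}, and to transport the known comparison machinery of \cite{BJK} along it. First I would recall that in \cite{BJK} the authors exhibit, in any pointed finitely complete category with finite sums, an adjunction $(F',G',\eta',\epsilon')$ between $\mathbb C^{\mathbb T_G}$ and $\mbox{Pt}_G(\mathbb C)$, where $G'=\mathcal J_G$ is the (kernel-valued) comparison functor of the monadic adjunction and $F'$ sends a $\mathbb T_G$-algebra $\xi$ to the semi-direct product $G\ltimes(A,\xi)$, built as the coequalizer of $\langle k,i_G\rangle$ and $\xi+1$. The first step is to check that, when $\xi=\Xi_G(\psi)$ for an action $\psi$ in our sense, this coequalizer agrees with our $A\rtimes_\psi G=Q_\psi$: by the observation preceding Lemma \ref{inclcrosseff}, the coequalizer of $\langle k,i_G\rangle$ and $\xi+1$ equals the coequalizer of $k$ and $i_A\xi$, and by Proposition \ref{coequalizer} the latter equals the coequalizer of $\iota$ and $i_A\psi$, which is exactly $q_\psi$. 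Hence $F'\Xi_G(\psi)$ is (canonically isomorphic to) the point $(Q_\psi,p_\psi,s_\psi)$; in particular the construction of $A\rtimes_\psi G$ is functorial, being a composite of functors.

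Next I would restrict the \cite{BJK} adjunction to the full subcategory $\mathbb X_G\subseteq\mathbb C^{\mathbb T_G}$ of objects in the image of $\Xi_G$, and show that on $\mathbb X_G$ it becomes an equivalence with $\mbox{Pt}_G(\mathbb C)$. For this it suffices to check two things. On one hand, the right adjoint $\mathcal J_G$ lands in $\mathbb X_G$: given a point $(E,q,s)$ with kernel $A$, its comparison algebra $\xi_E=R(\epsilon_E)$ has restriction $\xi_E j=:\psi$ along $j$, and one must see that $\psi$ is an action in our sense, i.e. that $q_\psi\cdot i_A$ is a monomorphism — equivalently (Proposition \ref{coeq}) that the canonical $v\colon A\to\mbox{Ker}(p_\psi)$ is an isomorphism. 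Here I would use that for \emph{this} particular $\xi$, coming from an honest point, the semi-direct product recovers $E$ up to isomorphism over $G$ (this is exactly where the hypothesis that $\mathbb C$ be homological — protomodularity plus regularity, giving Noether's isomorphism theorem as used in Proposition \ref{coeq} — does the work, so no appeal to Beck's criterion or to exactness is needed), whence $l_\psi$ is a kernel of $p_\psi$ and $q_\psi i_A$ is mono. By Proposition \ref{compar} then $\xi_E=\Xi_G(\psi)\in\mathbb X_G$, and we may define $\Psi_G\colon\mbox{Pt}_G(\mathbb C)\to\mbox{Act}_G(\mathbb C)$ by $(E,q,s)\mapsto\psi$; tautologically $\Xi_G\Psi_G=\mathcal J_G$. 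On the other hand, the unit $\eta'$ and counit $\epsilon'$ of the \cite{BJK} adjunction, when evaluated on $\mathbb X_G$ and on $\mbox{Pt}_G(\mathbb C)$ respectively, are isomorphisms: $\epsilon'$ is iso on all of $\mbox{Pt}_G(\mathbb C)$ precisely because the point is reconstructed from its kernel-plus-algebra (again the homological hypothesis), and $\eta'_\xi\colon\xi\to\mathcal J_G F'\xi$ is iso for $\xi\in\mathbb X_G$ because $\xi=\Xi_G(\psi)$, $F'\xi=(Q_\psi,p_\psi,s_\psi)$, and $\mathcal J_G$ of that point is, by the previous sentence applied to $E=Q_\psi$, the algebra with restriction $\psi$, hence $=\xi$ by the "injective on objects" part of Proposition \ref{compar}.

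Combining, $F'$ and $\mathcal J_G$ restrict to an equivalence $\mathbb X_G\simeq\mbox{Pt}_G(\mathbb C)$. Since Proposition \ref{compar} gives an \emph{isomorphism} $\Xi_G\colon\mbox{Act}_G(\mathbb C)\xrightarrow{\ \sim\ }\mathbb X_G$, precomposition yields an equivalence $\mbox{Act}_G(\mathbb C)\simeq\mbox{Pt}_G(\mathbb C)$, with quasi-inverse $\Psi_G$ and $\Xi_G\Psi_G=\mathcal J_G$ as claimed. Finally I would verify naturality in $G$: a map $g\colon G\to G'$ induces compatible functors on both $\mbox{Act}$ and $\mbox{Pt}$ (change of base), the object assignments $(A|G)\to(A|G')$ and $T_G\to T_{G'}$ being induced by $\mathrm{cr}_2$-functoriality and by $i_{A}$-naturality of the kernels, and all the squares in sight commute because they did so before restriction; hence the equivalences $\mbox{Act}_G(\mathbb C)\simeq\mbox{Pt}_G(\mathbb C)$ assemble into an equivalence of the total categories $\mbox{Act}(\mathbb C)\simeq\mbox{Pt}(\mathbb C)$ over $\mathbb C$, exhibiting $\mbox{Act}(\mathbb C)$ as a fibration with fibres $\mbox{Act}_G(\mathbb C)$.

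The main obstacle I anticipate is the first half of the second paragraph: showing that the comparison algebra of a genuine point restricts, along $j$, to an action in \emph{our} sense — that is, that $q_\psi\cdot i_A$ comes out monic — without invoking exactness or Beck's theorem. Everything else is formal transport along the isomorphism $\Xi_G$ and routine bookkeeping of the \cite{BJK} adjunction; but this one point is exactly the place where the structure of a finitely cocomplete homological category (Noether's first isomorphism theorem, protomodularity making $(\eta,j)$ strongly epic, regularity of the relevant coequalizers) has to be used in an essential way, and care is needed to see that the reconstruction $Q_\psi\cong E$ over $G$ really only consumes those hypotheses.
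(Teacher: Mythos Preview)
Your overall architecture matches the paper's proof exactly: identify $F'\Xi_G(\psi)$ with $(Q_\psi,p_\psi,s_\psi)$ via Proposition~\ref{coequalizer}; show that $\eta'$ is an isomorphism on $\mathbb X_G$ and $\epsilon'$ is an isomorphism on all of $\mbox{Pt}_G(\mathbb C)$; transport along the isomorphism $\Xi_G$. You have also correctly located the one genuinely nontrivial step, namely showing that for a point $(E,q,s)$ with kernel $l\colon A\to E$, the map $\psi=\xi_E\, j$ is an action, i.e.\ $q_\psi i_A$ is monic.

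However, your sketched resolution of that step is logically inverted. You write that ``the semi-direct product recovers $E$ up to isomorphism over $G$ \ldots\ whence $l_\psi$ is a kernel of $p_\psi$ and $q_\psi i_A$ is mono''. But the only tool available to prove $Q_\psi\cong E$ over $G$ is the (split) Short Five Lemma, and to apply it you must already know that $q_\psi i_A$ is a kernel of $p_\psi$, which by Proposition~\ref{coeq}(3) requires precisely that $q_\psi i_A$ be monic. So as stated the argument is circular. The paper breaks the circle by a very elementary observation that you omit: since $\langle l,s\rangle\iota = l\psi = \langle l,s\rangle i_A\psi$, the universal property of the coequalizer $q_\psi$ produces a map $e\colon Q_\psi\to E$ with $e\,q_\psi=\langle l,s\rangle$, and then
\[
e\,l_\psi \;=\; e\,q_\psi i_A \;=\; \langle l,s\rangle i_A \;=\; l,
\]
which is monic; hence $l_\psi=q_\psi i_A$ is monic \emph{first}. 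Only after this does one invoke Proposition~\ref{coeq}(3) and the Short Five Lemma to conclude that $e$ is an isomorphism (giving $\epsilon'$ invertible). Once you insert this factorization argument in the right place, the rest of your proposal goes through unchanged and coincides with the paper's proof.
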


\begin{proof}The first assertion is an immediate consequence of Proposition \ref{coequalizer}.
Let us now prove the assertion about the comparison adjunction. First, let us consider a $\mathbb T_G$-algebra $\xi:T_G(A)\rightarrow A$ which is in $\mathbb X_G$, i.e. which is the extension of a (unique) action $\psi:(A|G)\rightarrow G$. We show that $\eta'_{\xi}$ is an isomorphism between $\xi$ and ${\mathcal J}_G(F'(\xi))$. Consider the following diagram :
$$\xymatrix{
(A|G)\ar[r]^{j_{A,G}}\ar[dr]_{\psi}&T_GA\ar[d]_{\xi}\ar[r]^{k_{A,G}}&A+G\ar[d]^{q_{\psi}}\\
&A\ar[r]_{l_{\psi}}\ar[ru]_{i_A}&A\rtimes_{\psi}G\ar@<-2pt>[r]_-{p_{\psi}}&G\ar@<-2pt>[l]_-{s_{\psi}}}$$
Then in view of all what preceeds, $(A\rtimes_{\psi}G,p_{\psi},s_{\psi})=F'(\xi)$ and since $l_{\psi}$ is the kernel of $p_{\psi}$ (because $\psi$ is an action), $G'(F'(\xi))$ is the unique arrow $h$ from $T_GA$ to $A$ such that $l_{\psi}h=q_{\psi}.k_{A,G}$, i.e. $\xi$.

Secondly, we show that $G'$ has values in $\mathbb X_G$, i.e. that for any object
$$\xymatrix{
X\ar@<-2pt>[r]_p&G\ar@<-2pt>[l]_s}$$
the algebra $\mathcal J_G(X,p,s)$ is (the extension to $T_G(A)$ of)  an action $(A,G,\psi)$. Moreover we show that it is  such that $A\rtimes_{\psi}G$ is isomorphic to $(X,G,p,s)$, the $G$-part of the isomorphism being the identity : this in fact shows that $\epsilon'_{(X,p,s)}$ is an isomorphism.

Let $A$ be $\mbox{Ker}(p)$ and $l:A\rightarrow X$ be $\mbox{ker}(p)$. Consider the following diagram, where $p_G$ denotes the projection on $G$ in the product $A\times G$. It is commutative, since $p<l,s>=r_G$ (one may check it by preceeding these morphisms by the canonical injections $i_A$ and $i_G$):

$$\xymatrix{
(A|G)\ar[r]^{\iota_{A,G}}\ar@{.>}[dd]_{\exists !\psi}&A+ G\ar[dd]_{<l,s>}\ar@<2pt>[rdd]^{r_G}\ar[r]^{b_{A,G}}&A\times G\ar[dd]^{p_G}\\
\\
A\ar[r]_l\ar[uur]^{i_A}&X\ar[r]_p&G\ar@<2pt>[luu]^{i_G}
}$$
Since $\iota_{A,G}$ is the kernel of $b_{A,G}$ and since $p_G.b_{A,G}=p.<l,s>$, one has: $p.<l,s>.\iota_{A,G}=p_G.b_{A,G}.\iota_{A,G}=0$; so since $l$ is the kernel of $p$, there exists a unique $\psi:(A|G)\rightarrow A$ such that $l.\psi=<l,s>.\iota_{A,G}$. We claim that $(A,G,\psi)$ is an action. Since $l=<l,s>.i_A$ is a monomorphism, it suffices to show that $<l,s>$, which is known to be a regular epimorphism, is the coequalizer of $\iota_{A,G}$ and $i_A.\psi$.

Consider $q_{\psi}:A+ G\rightarrow Q_{\psi}$ the coequalizer of $\iota_{A,G}$ and $i_A.\psi$, and $p_{\psi}$ and $s_{\psi}$ defined as in Proposition \ref{coeq} above. We will show that $q_{\psi}.i_A$ is a monomorphism, thus showing that is an action, and that the $G$-point $(Q_{\psi},p_{\psi},s_{\psi})$, which is nothing but $A\rtimes_{\psi}G$, is isomorphic to $(X,p,s)$.

First of all, $<l,s>.i_{A}.\psi=l.\psi=<l,s>.\iota_{A,G}$ hence, since $q_{\psi}$ is the coequalizer of $i_A.\psi$ and $\iota$, there exists a unique morphism $e:Q_{\psi}\rightarrow X$ such that $q_{\psi}.e=<l,s>$:

$$\xymatrix{
&A+ G\ar[d]^{<l,s>}\ar@/^3pc/[dd]^{q_{\psi}}\\
A\ar[ur]^{i_A}\ar[r]^l\ar[dr]_{q_{\psi}.i_A}&X\\
&Q\ar[u]_e}$$
and of course, then $e.q_{\psi}.i_A=<l,s>.i_A=l$, so this diagram commutes. But then, since $l$ is the kernel of $p$, it is a monomorphism, hence so is $q_{\psi}.i_A$.
But then, by Proposition \ref{coeq}.3) above, $q_{\psi}.i_A$ = ker $p_{\psi}$.

Then consider the following diagram:
$$\xymatrix{
&&&A+ G\ar@/_1pc/[ddl]_{<l,s>}\ar[d]^{q_{\psi}}\ar@<-2pt>[r]_{r_G}&G\ar@<-2pt>[l]_{i_G}\ar@{=}[d]\ar@/_1pc/@{=}[ddl]\\
&1\ar[r]&A\ar@{=}[ld]\ar[r]^{q_{\psi}.i_A}&Q_{\psi}\ar[ld]^e\ar@<-2pt>[r]_{p_{\psi}}&G\ar@<-2pt>[l]_{s_{\psi}}\ar@{=}[ld]\ar[r]&1\\
1\ar[r]&A\ar[r]_l&X\ar@<-2pt>[r]_p&G\ar@<-2pt>[l]_s\ar[r]&1
}$$

The only part of this diagram which has not been shown to commute is the bottom right-hand square. But one has $ p_{\psi}.q_{\psi}=r_G=p.<l,s>=p.e.q_{\psi}$, hence $p.e=p_{\psi}$ since $q_{\psi}$ is a (regular) epimorphism. And $e.s_{\psi}=e.q_{\psi}.i_G=<l,s>.i_G=s$. Hence all conditions of the Short Split Five Lemma are satisfied, so $e$ is an isomorphism, and thus defines an isomorphism $(e,1_G)$ in $\mbox{Pt}(\mathcal C).$

Note that we then have implicitly constructed (by composition with the inverse of $\Xi_G)$ a functor $\mbox{Pt}(\mathcal C)\rightarrow \mbox{Act}(\mathcal C)$. We denote it by $\Psi$ (and its restriction to the fibers on $G$ by $\Psi_G$). For a point $\xymatrix{X\ar[r]_p&G\ar@/_/[l]_s}$, $\Psi(X, G,p,s)$ is an action on $A=\mbox{Ker }p$. It is easy to verify that if $\xymatrix{X'\ar[r]_{p'}&G'\ar@/_/[l]_{s'}}$ is another point and $(x,g)$ is a morphism of points between them, i.e. a pair of morphisms making the following diagram commute
$$\xymatrix{X\ar[r]_p\ar[d]_x&G\ar@/_/[l]_s\ar[d]^g\\
X'\ar[r]_{p'}&G'\ar@/_/[l]_{s'}}$$then $\Psi(x,g)$ is the unique map $a:A\rightarrow A'$ making the following diagram commute
$$\xymatrix{A=\mbox{Ker }p\ar[d]_a\ar[rr]^{\mbox{\tiny ker }p}&&X\ar[d]^x\\
A'=\mbox{Ker }p'\ar[rr]^{\mbox{\tiny ker }p'}&&X'}$$which indeed is a map of actions between $\Psi(X,G,p,s)$ and $\Psi(X',G',p',s')$.

Finally, the very constructions of $\Psi_G$ and $\Xi_G$ ensure that $\Xi_G\Psi_G=\mathcal J_G$.
\end{proof}

\noindent{\bf Examples} 

\noindent 1) Recall that the conjugation action of an object $E$ of $\mathbb C$ on itself is defined in \cite{BJ}, as a split extension on $E$, to be the short exact sequence  $0\to E \mr{\sigma_1} E \times E \mr{\pi_2} E\to 0$ with the splitting $\Delta: E \to E \times E$ being the diagonal map. By Proposition \ref{equivact}, it corresponds to some action $(E|E)\rightarrow E$. We show that this corresponding action, which  will be denoted by $c^E_2$ in the sequel, is $\nabla_E.\iota_{E,E}$, where $\nabla_E: E+E\rightarrow E$ is the codiagonal, and that the corresponding algebra $\Xi_E(c_2^E)$ is $\nabla_E.k_{E,E}$.  Since $\psi=\Xi_E(\psi).k_{E,E}$, it suffices to prove the second assertion, and  by  the construction of $\Xi_E$ it suffices to show that this map makes  the following diagram commute:
$$\xymatrix{
T_EE\ar[r]^{k_{E,E}}\ar[d]_{k_{E,E}}&E+E\ar[dd]^{<\sigma_1,\Delta_E>}\\
E+E\ar[d]_{\nabla_E}\\
E\ar[r]^{\sigma_1}&E\times E}$$%
Followed by $\pi_1$, one gets $\pi_1.\sigma_1.\nabla_E.k_{E,E}=\nabla_E.k_{E,E}$ on the one hand, and $\pi_1.<\sigma_1,\Delta_E>.k_{E,E}$ $=<\pi_1.\sigma_1,\pi_1.\Delta_E>.k_{E,E}=<1_E,1_E>.k_{E,E}=\nabla_E.k_{E,E}$ on the other hand. And followed by $\pi_2$, one gets $\pi_2.\sigma_1.\nabla_E.k_{E,E}=0.\nabla_E.k_{E,E}=0$ on the one hand, and $\pi_2.<\sigma_1,\Delta_E>.k_{E,E}$ $=<\pi_2.\sigma_1,\pi_1.\Delta_E>.k_{E,E}=<0,1_E>.k_{E,E}=r_2.k_{E,E}=0$ on the other hand.

Of course, this construction $c^{(-)}_2$ is functorial. More precisely,  $c^{(-)}_2$ may be considered as a functor $\mathbb C\rightarrow \mbox{Act}(\mathbb C)$, or as a natural transform $S\rightarrow 1_{\mathbb C}$, where $S:\mathbb C\rightarrow \mathbb C$ is defined by $F(E)=(E|E)$ and $S(f)=(f|f)$, both meaning that the following diagram commutes:
$$\xymatrix{
(E|E)\ar[d]_{c_2^E}\ar[r]^{(f|f)}&(F|F)\ar[d]^{c_2^F}\\
E\ar[r]_f&F}$$
\noindent which follows immediately from naturality of $\iota$ and of $\nabla$.

 Then the split extension
$$\xymatrix{0\ar[r] &E\ar[r]^-{i_1} &E\rtimes_{c_2^E} E \ar[r]^-{p_2}   &E\ar[r]&0}$$
is (canonically isomorphic to) the following one:
$$\xymatrix{0\ar[r] &E \ar[r]^{i_1} & E \times E \ar[r]^{p_2} & E\ar[r] &0
}$$
\noindent by the very definition of $c_2$. We will generalize this construction to a conjugation action of an object on any  proper subobject in the following paragraph.\\

\noindent 2) The split short exact sequence of Lemma \ref{inclcrosseff}:
$$\xymatrix{
0\ar[r]&(A|G)\ar[r]^j&T_G(A)\ar[r]_{r_A.k}&A\ar@/_1pc/[l]_{\eta_A}\ar[r]&0
}$$
corresponds to an action $\psi$ of $A$ on $(A|G)$ such that $T_G(A)=(A|G)\rtimes_{\psi}A$, and the following one:
$$\xymatrix{0\ar[r]&T_G(A)\ar[r]^k&A+G\ar[r]^{r_G}&G\ar@/_1pc/[l]_{i_G}\ar[r]&0}$$ to an action $\psi'$ of G on $T_G(A)$ such that $A+G=T_G\rtimes_{\psi'}G$,
hence one may write $A+G=((A|G)\rtimes_{\psi}A)\rtimes_{\psi'}G$.

\begin{prop}\label{stabact} Let $\psi:(A|G) \to A$ be an  action in $\C$, and let $B\mr{b}A$ and $h:H\to G$ be two subobjects. Suppose that $B$ is $H$-stable under $\psi$, i.e.\ the map $\psi(b|h): (B|H) \to A$ factors through a map $\psi': (B|H) \to B$ such that $b\psi'=\psi(b|h)$. Then $\psi'$ is an action of $H$ on $B$.\end{prop}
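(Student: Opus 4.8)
The plan is to show directly that $q_{\psi'} \cdot i_B$ is a monomorphism, where $q_{\psi'}: B+H \to Q_{\psi'}$ is the coequalizer of $\iota_{B,H}$ and $i_B \cdot \psi'$. The natural strategy is to exploit the functoriality established in Proposition \ref{equivact}: the pair $(b,h)$ should be a map of actions from $(B,H,\psi')$ to $(A,G,\psi)$ in the sense of Definition \ref{action}, since the square with $\psi'$, $\psi$, $(b|h)$ and $b$ commutes by the very hypothesis $b\psi' = \psi(b|h)$. So the point $(Q_{\psi'}, p_{\psi'}, s_{\psi'})$ maps to $(A\rtimes_\psi G, p_\psi, s_\psi)$, and we want to conclude that $l_{\psi'} = q_{\psi'}\cdot i_B$ is a kernel, equivalently (by Proposition \ref{coeq}.3) that it is a monomorphism.

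First I would use the universal property of Proposition \ref{firstunivcondact}: the maps $f_B = b\cdot i_A \cdot (\text{well, } bi_A)$... more precisely, consider $f_B = i_A b : B \to A+G$ followed by $q_\psi$, i.e.\ $f_B = l_\psi \cdot b$ — wait, I must keep things in $\mathbb{C}$. Consider the two maps $B \xrightarrow{b} A \xrightarrow{i_A} A+G \xrightarrow{q_\psi} A\rtimes_\psi G$ and $H \xrightarrow{h} G \xrightarrow{i_G} A+G \xrightarrow{q_\psi} A\rtimes_\psi G$, i.e.\ $l_\psi b$ and $s_\psi h$. I claim they satisfy the compatibility hypothesis of Proposition \ref{firstunivcondact} with $\psi'$: one needs $\langle l_\psi b, s_\psi h\rangle \cdot \iota_{B,H} = l_\psi b \cdot \psi'$. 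Since $\langle l_\psi b, s_\psi h \rangle = q_\psi \cdot \langle i_A b, i_G h\rangle = q_\psi \cdot (b+h)$, and by naturality of $\iota$ we have $(b+h)\cdot \iota_{B,H} = \iota_{A,G}\cdot(b|h)$, the left side becomes $q_\psi \iota_{A,G} (b|h) = q_\psi i_A \psi (b|h) = l_\psi \psi(b|h) = l_\psi b \psi'$, using the definition of $q_\psi$ and the hypothesis. Hence Proposition \ref{firstunivcondact} yields a unique $f : B\rtimes_{\psi'} H \to A\rtimes_\psi G$ with $f l_{\psi'} = l_\psi b$ and $f s_{\psi'} = s_\psi h$. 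Wait — this presupposes $(B,H,\psi')$ is already an action, which is what we want to prove; so instead I should phrase this using the bare coequalizer $q_{\psi'}$, which always exists, and the analogous universal property of a plain coequalizer (the construction of $Q_{\psi'}$ and $p_{\psi'}$, $s_{\psi'}$, $l_{\psi'}$ in Proposition \ref{coeq} does not require $\psi'$ to be an action).

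So the real argument: from $f l_{\psi'} = l_\psi b$ with $l_\psi$ a monomorphism (as $\psi$ is an action) and $b$ a monomorphism (as $B\mr{b}A$ is a subobject), the composite $l_\psi b$ is a monomorphism, hence $l_{\psi'} = q_{\psi'} i_B$ is a monomorphism. By Proposition \ref{coeq}.3 this exactly says $\psi'$ is an action of $H$ on $B$. The main obstacle I anticipate is the bookkeeping around the compatibility identity $\langle l_\psi b, s_\psi h\rangle \iota_{B,H} = l_\psi b\,\psi'$ — in particular getting the naturality square for $\iota$ in the right form $(b+h)\iota_{B,H} = \iota_{A,G}(b|h)$ and then chaining it correctly with the defining coequalizer relation $q_\psi \iota_{A,G} = q_\psi i_A \psi$ and the stability hypothesis $\psi(b|h) = b\psi'$; once that identity is in hand, everything else is formal. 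One should also double-check that $(b|h) = cr_2(Id)(b,h)$ is the map induced on cross-effects by $b$ and $h$, which is immediate from the definition of $cr_2$ on morphisms and the fact that $b+h$ restricts to it by Proposition \ref{elcrprops}(1).
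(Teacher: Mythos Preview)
Your proposal is correct and follows essentially the same route as the paper: both arguments show that $q_{\psi}(b+h)$ coequalizes $\iota_{B,H}$ and $i_B\psi'$ (via naturality of $\iota$, the defining relation of $q_{\psi}$, and the stability hypothesis), obtain the induced map $f\colon Q_{\psi'}\to A\rtimes_{\psi}G$, and conclude from $f\,l_{\psi'}=l_{\psi}\,b$ that $l_{\psi'}$ is monic. Your self-correction---bypassing Proposition~\ref{firstunivcondact} in favor of the bare coequalizer property from Proposition~\ref{coeq}, since $\psi'$ is not yet known to be an action---is exactly the right care to take.
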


\begin{proof}Consider the following diagram of solid arrows, where $q'$ is the coequalizer of $\iota_{B,H}$ and $i_B\psi'$; all squares and the bottom triangles are commutative, and the upper triangles are coequalized by $q'$ and $q=q_{\psi}$ respectively:

$$\xymatrix{
&B+H\ar[dd]^{q'}\ar[rrr]^{b+h}&&&A+G\ar[dd]^q\\
(B|H)\ar[dd]_{\psi'}\ar[rrr]^{(b|h)}\ar[ru]^{\iota_{B,H}}&&&(A|G)\ar[dd]^{\psi}\ar[ru]^{\iota_{A,G}}\\
&Q \ar@{.>}[rrr]^f&&&A\rtimes_{\psi}G\\
B\ar[ruuu]_{i_B}\ar[rrr]_b\ar[ru]_{q'.i_B}&&&A\ar[ruuu]_{i_A}\ar[ru]_{l_{\psi}}
}$$

\noindent We have to show that $q'i_B$ is a monomorphism. Since $q(b+h)i_B\psi'=qi_Ab\psi'=qi_A\psi(b|h)=q\iota_{A,G}(b|h)=q(b+h)\iota_{B,H}$, there is a unique $f:Q\rightarrow A\rtimes_{\psi}G$ such that $q(b+h)=fq'$. Then $l_{\psi}b=qi_Ab=q(b+h)i_B=fq'i_B$. Hence, since $b$ and $l_{\psi}$ are monomorphisms, so is $q'i_B$.
\end{proof}
\noindent Note that under these conditions, $Q'$ is $B\rtimes_{\psi'}H$ and $f$ is nothing but $b\rtimes h$; and it is easy to see that it is a monomorphism too.

Finally, we need to exhibit the kernel and image of the semi-direct product of compatible maps. Here, and several times in the sequel, we need the following obvious lemma.

\begin{lemma}\label{factorlem}
Consider a commutative diagram
\[\xymatrix{
X\ar[r]^-f \ar@{->>}[d]^q & Y\ar@{{ >}->}[d]^i\\
Q \ar[r]^j & Z
}\]
in any category where $q$ is a regular epimorphism and $i$ is a monomorphism. Then $j$ factors through $i$.
\end{lemma}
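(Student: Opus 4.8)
The plan is to deduce this from the classical orthogonality between regular epimorphisms and monomorphisms, using only the defining universal property of $q$. First I would unpack what it means for $q$ to be a regular epimorphism: by definition $q$ is the coequalizer of \emph{some} parallel pair $u,v\colon W\rightrightarrows X$ (if $\mathbb{C}$ has pullbacks one may take the kernel pair of $q$, but any presenting pair works, which is what lets the lemma be stated ``in any category''). In particular $qu=qv$.

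Next I would compose the given commutative square with $u$ and with $v$. From $if=jq$ we obtain $ifu=jqu=jqv=ifv$, and since $i$ is a monomorphism this forces $fu=fv$. Hence $f$ coequalizes the pair $(u,v)$, so by the universal property of the coequalizer $q$ there is a unique map $d\colon Q\to Y$ with $dq=f$. It then remains to identify $d$ as the desired factorization: composing $id$ with $q$ gives $idq=if=jq$, and since a regular epimorphism is in particular an epimorphism, $q$ may be cancelled to yield $id=j$. Thus $j$ factors through $i$ (and, as a byproduct, $d$ is the unique diagonal filler of the square, with $dq=f$ and $id=j$).

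I do not expect any real obstacle here: the argument is exactly the standard proof that regular epis are left-orthogonal to monos, and it uses nothing beyond a coequalizer presentation of $q$ together with the cancellation properties of monomorphisms and epimorphisms — no pointedness, protomodularity, exactness, or finite (co)completeness of the ambient category is invoked. The only points worth stating carefully in the write-up are that we need merely the existence (not uniqueness) of the factorization for the lemma as stated, and that $q$ being epic is precisely what upgrades the relation $idq=jq$ to $id=j$.
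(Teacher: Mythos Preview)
Your proof is correct and is precisely the standard argument that regular epimorphisms are left-orthogonal to monomorphisms. The paper does not actually give a proof of this lemma: it introduces it as ``the following obvious lemma'' and states it without justification. Your write-up therefore supplies exactly what the paper omits, and there is nothing to compare.
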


\begin{prop}\label{KerImfxg} Let $\psi:(A|G) \to A$ and $\psi': (B|H)\to B$ be  actions in $\C$, and let $f:A\to B$ and $g:G\to H$ in $\C$ such that $(f,g)$ is a map $\psi\rightarrow \psi'$ in $\mbox{Act}(\mathbb C)$, i.e.\ the diagram
\begin{equation}\label{fxgdia}
\xymatrix{ (A|G)\ar[r]^{\psi} \ar[d]^{(f|g)} & A\ar[d]^f\\
(B|H)\ar[r]^{\psi'} & B}
\end{equation}
commutes. Then 
\[\mbox{ker} (f\rtimes g) =   \mbox{ker}(f) \rtimes\mbox{ker}(g): \mbox{Ker}(f) \rtimes_{\tilde{\psi}} \mbox{Ker}(g) \to A\rtimes_{\psi} G\]
where $\tilde{\psi}$ is given by restricting to Ker$(g)$ the $G$-action on Ker$(f)$ coming from the  fact that Ker$(f)$ is stable under $\psi$, see Proposition \ref{stabact}. Analogously,
\[ \mbox{im} (f\rtimes g) =   \mbox{im}(f) \rtimes \mbox{im}(g): \mbox{Im}(f) \rtimes_{\tilde{\psi}' }\mbox{Im}(g) \to B\rtimes_{\psi'} H\]
Here $\tilde{\psi}':( \mbox{Im}(f)| \mbox{Im}(g))\to  \mbox{Im}(f)$ is the unique map such that  
$\mbox{im}(f) \tilde{\psi}'  =\psi' (\mbox{im}(f)|\mbox{im}(g))$
where $\xymatrix{A\ar@{->>}[r]^-{\tilde f} & \mbox{Im}(f) \ar@{{ >}->}[r]^-{\mbox{\tiny im }f} &B}$ denotes an image factorization of $f$ and similarly for $g$.
\end{prop}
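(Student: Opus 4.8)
The plan is to establish the two assertions separately; both reduce to the functoriality of the semi-direct product construction (Proposition \ref{equivact}) together with the elementary properties of cross-effects from Section~1.

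\emph{The kernel.} First one checks that $\mbox{Ker}(f)$ is $G$-stable under $\psi$ in the sense of Proposition \ref{stabact}: from the commutativity of \eqref{fxgdia} one gets $f\,\psi(\mbox{ker}(f)|1_G)=\psi'(f|g)(\mbox{ker}(f)|1_G)=\psi'\,(f\,\mbox{ker}(f)\,|\,g)=\psi'\,(0|g)$, and $(0|g)=0$ because $(0|g)\colon(\mbox{Ker}(f)|G)\to(B|H)$ factors (by functoriality of $cr_2$) through $(0|H)=0$, using multireducedness (Proposition \ref{elcrprops}(2)). Hence $\psi(\mbox{ker}(f)|1_G)$ factors through $\mbox{ker}(f)$ as $\mbox{ker}(f)\,\bar\psi$, and $\bar\psi\colon(\mbox{Ker}(f)|G)\to\mbox{Ker}(f)$ is an action by Proposition \ref{stabact}. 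Restricting it along $\mbox{ker}(g)\colon\mbox{Ker}(g)\to G$ produces $\tilde\psi=\bar\psi\,(1|\mbox{ker}(g))$, again an action by Proposition \ref{stabact}; this is precisely the action named in the statement, and a direct check shows $(\mbox{ker}(f),\mbox{ker}(g))\colon\tilde\psi\to\psi$ is a morphism of actions. Thus $\mbox{ker}(f)\rtimes\mbox{ker}(g)$ is defined and, as a semi-direct product of two monomorphisms, is itself a monomorphism (the remark after Proposition \ref{stabact}). Finally, by functoriality $(f\rtimes g)(\mbox{ker}(f)\rtimes\mbox{ker}(g))=(f\,\mbox{ker}(f))\rtimes(g\,\mbox{ker}(g))$, which factors through the semi-direct product of the zero maps, hence through the zero object, and so is zero; therefore $\mbox{ker}(f)\rtimes\mbox{ker}(g)$ factors through $\mbox{ker}(f\rtimes g)$ by a (necessarily monic) map $\theta\colon\mbox{Ker}(f)\rtimes_{\tilde\psi}\mbox{Ker}(g)\to\mbox{Ker}(f\rtimes g)$.

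\emph{Conclusion of the kernel case.} It remains to see that $\theta$ is an isomorphism. Since $f\rtimes g$ lies over $g$, one has $g\,p_\psi\,\mbox{ker}(f\rtimes g)=p_{\psi'}(f\rtimes g)\mbox{ker}(f\rtimes g)=0$, so $p_\psi\,\mbox{ker}(f\rtimes g)$ factors as $\mbox{ker}(g)\,\pi$ for a unique $\pi\colon\mbox{Ker}(f\rtimes g)\to\mbox{Ker}(g)$; moreover $s_\psi\,\mbox{ker}(g)$ factors through $\mbox{ker}(f\rtimes g)$ (as $(f\rtimes g)s_\psi\mbox{ker}(g)=s_{\psi'}g\,\mbox{ker}(g)=0$) and gives a section of $\pi$, so $\pi$ is a split, hence regular, epimorphism. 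Pasting pullbacks identifies $\mbox{Ker}(\pi)$ with $\mbox{Ker}\big((f\rtimes g)l_\psi\big)=\mbox{Ker}(l_{\psi'}f)=\mbox{Ker}(f)$, the embedding into $\mbox{Ker}(f\rtimes g)$ being the restriction of $\theta$ to $\mbox{Ker}(f)$; so $0\to\mbox{Ker}(f)\to\mbox{Ker}(f\rtimes g)\xrightarrow{\pi}\mbox{Ker}(g)\to 0$ is split exact. Then $\theta$ is a morphism from the split exact sequence $0\to\mbox{Ker}(f)\to\mbox{Ker}(f)\rtimes_{\tilde\psi}\mbox{Ker}(g)\to\mbox{Ker}(g)\to 0$ to this one which is the identity on both ends, so by the Short Split Five Lemma $\theta$ is an isomorphism, compatibly with the embeddings into $A\rtimes_\psi G$; this is the first assertion.

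\emph{The image.} Write $f=\mbox{im}(f)\,\tilde f$ and $g=\mbox{im}(g)\,\tilde g$ with $\tilde f,\tilde g$ regular epimorphisms. Since $Id_{\mathbb C}$ preserves regular epimorphisms, Proposition \ref{cr-reg-epi} (applied in each variable, via the symmetry of Proposition \ref{elcrprops}(3)) shows $cr_2(Id_{\mathbb C})$ preserves regular epimorphisms in each variable, whence $(\tilde f|\tilde g)\colon(A|G)\to(\mbox{Im}(f)|\mbox{Im}(g))$ is a regular epimorphism. From $\psi'(\mbox{im}(f)|\mbox{im}(g))(\tilde f|\tilde g)=\psi'(f|g)=f\psi=\mbox{im}(f)\,\tilde f\psi$ and Lemma \ref{factorlem} (with $q=(\tilde f|\tilde g)$, $i=\mbox{im}(f)$) we obtain that $\psi'(\mbox{im}(f)|\mbox{im}(g))$ factors through $\mbox{im}(f)$, yielding $\tilde\psi'$; it is an action by Proposition \ref{stabact}, and $(\tilde f,\tilde g)\colon\psi\to\tilde\psi'$ and $(\mbox{im}(f),\mbox{im}(g))\colon\tilde\psi'\to\psi'$ are morphisms of actions with composite $(f,g)$. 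By functoriality $f\rtimes g=(\mbox{im}(f)\rtimes\mbox{im}(g))(\tilde f\rtimes\tilde g)$, where $\mbox{im}(f)\rtimes\mbox{im}(g)$ is a monomorphism by the remark after Proposition \ref{stabact}. For the other factor, the naturality square $(\tilde f\rtimes\tilde g)\,q_\psi=q_{\tilde\psi'}\,(\tilde f+\tilde g)$ has a regular epimorphism on the right ($q_{\tilde\psi'}$ is a coequalizer and a finite coproduct of regular epimorphisms is a regular epimorphism in the regular category $\mathbb C$), so $(\tilde f\rtimes\tilde g)\,q_\psi$ is a regular epimorphism, and hence so is $\tilde f\rtimes\tilde g$ (in a regular category, a morphism whose composite with a morphism on the right is a regular epimorphism is itself one). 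Thus $f\rtimes g=(\mbox{im}(f)\rtimes\mbox{im}(g))(\tilde f\rtimes\tilde g)$ is a factorization into a regular epimorphism followed by a monomorphism, i.e.\ the image factorization of $f\rtimes g$; this is the second assertion.

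The real content is concentrated in the two structural steps: in the kernel case, producing the split exact sequence $0\to\mbox{Ker}(f)\to\mbox{Ker}(f\rtimes g)\to\mbox{Ker}(g)\to 0$ and verifying that $\theta$ respects it so the Short Split Five Lemma applies; in the image case, proving that $\tilde f\rtimes\tilde g$ is a regular epimorphism. These are precisely the places where the homological (not merely pointed protomodular) structure of $\mathbb C$ and Proposition \ref{cr-reg-epi} are used; the remaining verifications — that the displayed pairs are morphisms of actions, and the pullback-pasting computing $\mbox{Ker}(\pi)$ — are routine.
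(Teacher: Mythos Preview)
Your proof is correct. The image argument is essentially the paper's: both factor $f\rtimes g=(\mbox{im}(f)\rtimes\mbox{im}(g))(\tilde f\rtimes\tilde g)$ and check the two factors are mono/regular epi; the paper invokes the short five lemma in each case, while you use the remark after Proposition~\ref{stabact} for the mono part and the coequalizer square $(\tilde f\rtimes\tilde g)q_\psi=q_{\tilde\psi'}(\tilde f+\tilde g)$ for the regular-epi part --- a harmless variation.

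The kernel argument, however, is organised differently. The paper first observes that $(\mbox{ker}(f),\mbox{ker}(g))$ is a kernel of $(f,g)$ in $\mbox{Act}(\mathbb C)$, transports this to $\mbox{Pt}(\mathbb C)$ via the equivalence of Proposition~\ref{equivact}, and then, to pass from a kernel in $\mbox{Pt}(\mathbb C)$ to one in $\mathbb C$, replaces an arbitrary test map $x\colon X\to A\rtimes_\psi G$ by a map of points with source $X+\mbox{Ker}(g)$; the universal property in $\mbox{Pt}(\mathbb C)$ then forces the desired factorisation of $x$. You instead show directly that $\mbox{Ker}(f\rtimes g)$ sits in a split short exact sequence $0\to\mbox{Ker}(f)\to\mbox{Ker}(f\rtimes g)\to\mbox{Ker}(g)\to 0$ and invoke the Short (Split) Five Lemma to identify it with $\mbox{Ker}(f)\rtimes_{\tilde\psi}\mbox{Ker}(g)$. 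Your route is more hands-on and avoids the detour through $\mbox{Pt}(\mathbb C)$; the paper's route is more conceptual and makes transparent \emph{why} the kernel is again a semi-direct product (because the equivalence $\mbox{Act}(\mathbb C)\simeq\mbox{Pt}(\mathbb C)$ preserves kernels). Both are valid in the finitely cocomplete homological setting.
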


\begin{proof} First note that diagram (\ref{fxgdia}) assures the existence of $\tilde{\psi}$ and $\tilde{\psi}'$: stability  of Ker$(f)$ under $\psi$ is easily deduced; to see that $\psi' (\mbox{im}(f)|\mbox{im}(g))$ indeed factors through $\mbox{im}(f)$, apply Lemma \ref{factorlem} to the factorization $(f|g)=(\mbox{im}(f)|\mbox{im}(g))(\tilde{f}|\tilde{g})$, noting that $(\tilde{f}|\tilde{g})$ is a regular epimorphism by Propositions \ref{cr-reg-epi} and \ref{elcrprops}.(3).

Now it is clear that 
\[(\mbox{ker}(f),\mbox{ker}(g)): (\mbox{Ker}(f),\mbox{Ker}(g),\tilde{\psi}) \longrightarrow (A,G,\psi)
\]
 is a kernel of the map $(f,g)$ in $\mbox{Act}({\mathbb C})$, whence 
 \[(\mbox{ker}(g),\mbox{ker}(f) \rtimes \mbox{ker}(g)):  (\mbox{Ker}(g), \mbox{Ker}(f)\rtimes_{\psi'} \mbox{Ker}(g), p_{\psi'},s_{\psi'})
 \to (G,A\rtimes_{\psi}G,p_{\psi},s_{\psi})\]
  is a kernel of $f\rtimes g$ in $\mbox{Pt}({\mathbb C})$, by the equivalence 
$\mbox{Act}({\mathbb C}) \sim \mbox{Pt}({\mathbb C})$. To show that $\mbox{ker}(f) \rtimes \mbox{ker}(g)$ is a kernel of $f\rtimes g$ in $\mathbb C$, let $x:X\to A\rtimes_{\psi}G$ be a map in $\mathbb C$ such that $(f\rtimes g)x=0$.
Then $p_{\psi}x$ factors as $X\xrightarrow{\tilde x} 
\mbox{Ker}(g) \xrightarrow{\rm{ker}(f)} G$ for some map $\tilde x$, and
\[\xymatrix{
X+\mbox{Ker}(g) \ar[d]_{<x,s_{\psi}\rm{ker}(g)>} \ar@<-2pt>[r]_-{<\tilde{x},1>} &\mbox{Ker}(g)\ar@<-2pt>[l]_-{i_2}\ar[d]^{\rm{ker}(g)}\\
A\rtimes_{\psi}G \ar@<-2pt>[r]_-{p_{\psi}}&G\ar@<-2pt>[l]_-{s_{\psi}}
}\]
is a map in $\mbox{Pt}(\mathbb C)$ whose postcomposition with $(g,f\rtimes g)$ is trivial, hence factors through $(\mbox{ker}(g),\mbox{ker}(f) \rtimes \mbox{ker}(g))$. Consequently $x$ factors through $\mbox{ker}(f) \rtimes \mbox{ker}(g)$, as desired.

To see that $\mbox{im}(f) \rtimes \mbox{im}(g)$ is an image of $f\rtimes g$, it suffices to note that $f\rtimes g = (\mbox{im}(f) \rtimes \mbox{im}(g))(\tilde f \rtimes \tilde g)$ where $\tilde f \rtimes \tilde g$ is a regular epimorphism and 
$\mbox{im}(f) \rtimes \mbox{im}(g)$ is a monomorphism by the short five lemma for regular epimorphisms and for monomorphisms, resp.
\end{proof}

\medskip

\section{Conjugation action of an object on a proper subobject}

 Using our alternative description of internal actions we now introduce a  general notion of conjugation action of an object $E$   on any of its  proper subobjects, as follows.

\begin{prop}\label{conjnormsub} Let $n: N \to E$ be a proper subobject in $\mathbb C$. Then there is an action $c^{N,E}: (N|E) \to N$ of $E$ on $N$ such that $nc^{N,E} = c_2^E(n|1)$. We call $c^{N,E}$   the \textit{conjugation action} of $E$ on $N$. It is natural with respect to pair maps $(E,N) \to (E',N')$, i.e.\ maps $f:E\to E'$ in \C\ such that $f(N)\subset N'$ for a given subobject $N'$ of $E'$.
\end{prop}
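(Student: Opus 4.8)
The plan is to produce the action $c^{N,E}$ by combining the restriction-stability machinery of Proposition \ref{stabact} with the conjugation action $c_2^E$ of $E$ on itself from Example 1). Concretely, I would first apply Proposition \ref{stabact} to the action $\psi = c_2^E : (E|E) \to E$, with the two subobjects being $n : N \to E$ on the ``$A$-side'' and the identity $1_E : E \to E$ on the ``$G$-side''. The hypothesis to check is that $N$ is $E$-stable under $c_2^E$, i.e.\ that the composite $c_2^E(n|1) : (N|E) \to E$ factors through $n : N \to E$. Granting this, Proposition \ref{stabact} delivers precisely a map $c^{N,E} : (N|E) \to N$ with $n\, c^{N,E} = c_2^E(n|1)$ which is moreover an action of $E$ on $N$ — which is exactly the assertion, with the naming ``conjugation action'' justified by the defining equation $n\, c^{N,E} = c_2^E(n|1)$.

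So the main obstacle, and the only real content, is verifying the factorization: that $c_2^E(n|1)$ factors through the kernel $n : N \to E$. Since $c_2^E = \nabla_E \iota_{E,E}$ by Example 1), and $(n|1) = cr_2(Id)(n,1_E) : (N|E) \to (E|E)$, naturality of $\iota$ gives $\iota_{E,E}(n|1) = (n+1)\iota_{N,E}$, so $c_2^E(n|1) = \nabla_E (n+1) \iota_{N,E} = \langle n, 1_E\rangle \iota_{N,E}$. Writing $N$ as the kernel of some map $p : E \to D$ (this is the definition of ``proper subobject''), I need $p\,\langle n,1_E\rangle \iota_{N,E} = 0$. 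Now $p\langle n,1_E\rangle : N+E \to D$ equals $\langle pn, p\rangle = \langle 0, p\rangle = p\, r_E$ (where $r_E : N+E \to E$ is the retraction onto the second summand, since $pn = 0$). Hence $p\langle n,1_E\rangle \iota_{N,E} = p\, r_E\, \iota_{N,E}$, and $r_E \iota_{N,E} = 0$ by definition of the cross-effect $(N|E) = \mathrm{Ker}\big((r_N, r_E)^t : N+E \to N\times E\big)$ composed with the relevant projection. This shows the factorization exists; call the induced map $c^{N,E}$, so that $n\, c^{N,E} = \langle n,1_E\rangle \iota_{N,E} = c_2^E(n|1)$.

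With the factorization in hand, Proposition \ref{stabact} immediately gives that $c^{N,E}$ is an action of $E$ on $N$, completing the existence part. For naturality, let $f : E \to E'$ be a pair map, i.e.\ $f(N) \subset N'$, so there is a (necessarily unique) $\bar f : N \to N'$ with $n' \bar f = f n$. I would verify the square
\[
\xymatrix{
(N|E) \ar[r]^{(\bar f | f)} \ar[d]_{c^{N,E}} & (N'|E') \ar[d]^{c^{N',E'}}\\
N \ar[r]_{\bar f} & N'
}
\]
commutes by postcomposing with the monomorphism $n'$: on one side $n' \bar f c^{N,E} = f n c^{N,E} = f c_2^E(n|1) = c_2^{E'}(f|f)(n|1)$ using naturality of $c_2^{(-)}$ from Example 1); on the other side $n' c^{N',E'} (\bar f|f) = c_2^{E'}(n'|1)(\bar f|f) = c_2^{E'}(n'\bar f | f) = c_2^{E'}(fn|f) = c_2^{E'}(f|f)(n|1)$, using functoriality of $cr_2$. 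Since $n'$ is monic, the two composites $(N|E) \to N'$ agree, giving naturality. I expect the factorization verification to be the only step requiring care; everything after it is formal bookkeeping with the already-established Propositions \ref{stabact} and the properties of $c_2^{(-)}$.
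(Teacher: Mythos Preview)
Your proposal is correct and follows essentially the same strategy as the paper: show that $c_2^E(n|1)$ factors through $n$, then invoke Proposition \ref{stabact} to conclude that the resulting map is an action, with naturality coming from naturality of $c_2^{(-)}$. The only difference is in how the factorization is verified: the paper takes $\pi$ to be the cokernel of $n$ and uses naturality of $c_2$ together with bireducedness of the cross-effect (Proposition \ref{elcrprops}(2)) to get $\pi c_2^E(n|1) = c_2^G(\pi n|\pi) = c_2^G(0|\pi) = 0$, whereas you compute directly that $p\langle n,1\rangle\iota_{N,E} = p\,r_E\,\iota_{N,E} = 0$ using the defining property of the cross-effect; these are two phrasings of the same vanishing.
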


\begin{proof} Let $\pi:E\to G$ be a cokernel of $n$.
Then we have  the following  diagram of plain arrows which commutes by naturality of commutator maps:
$$\xymatrix{
(N|E)\ar@{.>}[d]^{c^{N,E}} \ar[r]^{(n|1)} & (E|E)\ar[r]^{(\pi|\pi)} \ar[d]^{c_2^E} & (G|G)\ar[d]^{c_2^G}\\
N \ar[r]^n & E\ar[r]^{\pi} & G}$$
Thus $\pi c_2^E (n|1) =c_2^G(\pi|\pi)(n|1) = c_2^G(\pi n|\pi)  = (0|\pi)=
0$ since the functor $(-|-)$ is bireduced by Proposition \ref{elcrprops} (2), whence $c_2^E (n|1)$ factors through $n$, thus providing the desired map $c^{N,E}$. By Proposition \ref{stabact} it is an action.
 Its naturality is immediate from naturality of $c_2^E$.
\end{proof}

Note that $c^{E,E}$ coincides with the conjugation action of $E$ on itself in Example 1 following Proposition \ref{compar}, so our definition of a conjugation action of $E$ on $N$ generalizes the one for $N=E$ in \cite{BJ}.

We now give two properties which litterally generalize certain standard facts in the theory of groups or Lie algebras.

First we quote a reformulation of the \textit{universal property} of the semi-direct product:

\begin{prop}\label{PU} Let $\psi:(A|G) \to A$ be an  action in $\C$, and let $A \mr{f} X \ml{g} G$ be maps in \C. Then there exists a map $h=\overline{<f,g>}: A \rtimes_{\psi} G \to X$ such that $hl_{\psi} =f$ and $hs_{\psi} =g$ iff the following square commutes:
\begin{equation}\label{PUdia}
\xymatrix{ (A|G)\ar[r]^{\psi} \ar[d]_{(f|g)} & A \ar[d]^f\\
(X|X) \ar[r]^{c_2^X} & X}
\end{equation}
Moreover, if $h$ exists it is unique.
\end{prop}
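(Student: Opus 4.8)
The statement is an ``iff'', so there are two directions. The necessity direction (if $h$ exists, then the square \eqref{PUdia} commutes) is the easy one: given $h$ with $hl_\psi=f$ and $hs_\psi=g$, I would use the fact that $l_\psi = q_\psi i_A$ and $s_\psi = q_\psi i_G$, so that $h q_\psi = \langle f,g\rangle$. Then precomposing the defining relation $q_\psi \iota_{A,G} = q_\psi i_A \psi$ (the coequalizer identity) with $h$ gives $\langle f,g\rangle \iota_{A,G} = f\psi$. On the other hand, by the Example following Proposition \ref{compar}, $c_2^X = \nabla_X \iota_{X,X}$, and by naturality of $\iota$ applied to the map $\langle f,g\rangle \colon A+G \to X+X$ (more precisely the map $(f,g)\colon (A,G)\to(X,X)$ of pairs), we get $\iota_{X,X}(f|g) = \langle f,g\rangle\, \iota_{A,G}$ after identifying $X+X \to X$ appropriately; composing with $\nabla_X$ yields $c_2^X (f|g) = \nabla_X \langle f,g\rangle \iota_{A,G} = \langle f,g\rangle \iota_{A,G} = f\psi$, which is exactly the commutativity of \eqref{PUdia}.

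For sufficiency, assume the square \eqref{PUdia} commutes. The natural move is to invoke Proposition \ref{firstunivcondact}, which produces a unique $f\colon A\rtimes_\psi G \to C$ from a commuting square whose bottom-right map is $\langle f_A,f_G\rangle\colon A+G \to C$. So I need to convert ``$c_2^X(f|g) = f\psi$'' into ``$\langle f,g\rangle \iota_{A,G} = f\psi$'', i.e.\ recover the hypothesis of Proposition \ref{firstunivcondact} with $C=X$, $f_A = f$, $f_G=g$. But this is precisely the computation $c_2^X(f|g) = \nabla_X\iota_{X,X}(f|g) = \nabla_X\langle f,g\rangle\iota_{A,G} = \langle f,g\rangle\iota_{A,G}$ done above (using $\nabla_X\langle f,g\rangle = \langle f,g\rangle$ as maps $A+G\to X$, since $\nabla_X$ identifies the two copies). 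Hence the square of Proposition \ref{firstunivcondact} commutes, and that proposition delivers the desired $h$ together with $hl_\psi = f$, $hs_\psi = g$ and its uniqueness.

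\textbf{Main obstacle.} The only real content is the bookkeeping identity $c_2^X(f|g) = \langle f,g\rangle\,\iota_{A,G}$, i.e.\ checking that the ``commutator-map'' description of $c_2^X$ interacts correctly with the functoriality of cross-effects under the pair map $(f,g)$. Concretely one must be careful that the codiagonal $\nabla_X$ and the canonical map into the sum compose to give back $\langle f,g\rangle$, and that $\iota$ is natural in the appropriate sense (the injection $\iota^F$ of Proposition \ref{elcrprops}(1) is natural, and here $F = Id_{\mathbb C}$). Once this identity is in hand, everything reduces to a direct citation of Propositions \ref{firstunivcondact} and the Example computing $c_2^X = \nabla_X\iota_{X,X}$; uniqueness of $h$ is likewise inherited from Proposition \ref{firstunivcondact} (equivalently, from the fact that $(l_\psi,s_\psi)$ is strongly epimorphic by protomodularity). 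I would write the proof in essentially two short paragraphs: one establishing the identity $c_2^X(f|g) = \langle f,g\rangle\iota_{A,G}$, and one invoking Proposition \ref{firstunivcondact} in both directions.

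\begin{proof}
Recall from the Example following Proposition \ref{compar} that $c_2^X = \nabla_X\,\iota_{X,X}$, where $\nabla_X\colon X+X\to X$ is the codiagonal. By naturality of $\iota$ (Proposition \ref{elcrprops}(1)) applied to the pair map $(f,g)\colon (A,G)\to (X,X)$ we have $\iota_{X,X}\,(f|g) = (f+g)\,\iota_{A,G}$, and since $\nabla_X(f+g) = \langle f,g\rangle$ as maps $A+G\to X$, we obtain
\begin{equation}\label{c2comp}
c_2^X\,(f|g) = \nabla_X\,\iota_{X,X}\,(f|g) = \nabla_X\,(f+g)\,\iota_{A,G} = \langle f,g\rangle\,\iota_{A,G}\,.
\end{equation}

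Suppose first that the square \eqref{PUdia} commutes, i.e.\ $c_2^X(f|g) = f\psi$. By \eqref{c2comp} this says $\langle f,g\rangle\,\iota_{A,G} = f\psi = \langle f,g\rangle\, i_A\,\psi$, which is exactly the commutativity of the square in Proposition \ref{firstunivcondact} with $C=X$, $f_A=f$, $f_G=g$. That proposition then yields a unique $h\colon A\rtimes_\psi G\to X$ with $h\,l_\psi = f$ and $h\,s_\psi = g$.

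Conversely, suppose $h\colon A\rtimes_\psi G\to X$ satisfies $h\,l_\psi=f$ and $h\,s_\psi=g$. Since $l_\psi = q_\psi\,i_A$ and $s_\psi = q_\psi\,i_G$, we get $h\,q_\psi = \langle f,g\rangle$. Composing $h$ with the defining identity $q_\psi\,\iota_{A,G} = q_\psi\,i_A\,\psi$ of the coequalizer $q_\psi$ gives $\langle f,g\rangle\,\iota_{A,G} = f\,\psi$, and by \eqref{c2comp} this is precisely $c_2^X(f|g) = f\psi$, i.e.\ the square \eqref{PUdia} commutes.

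Finally, uniqueness of $h$ follows from Proposition \ref{firstunivcondact}, or directly from the fact that $(l_\psi,s_\psi)$ is a (strongly) epimorphic family by protomodularity of $\mathbb C$, since $A\rtimes_\psi G$ is a quotient of $A+G$.
\end{proof}
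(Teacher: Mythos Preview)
Your proof is correct and follows essentially the same approach as the paper: both establish the key identity $c_2^X(f|g)=\langle f,g\rangle\,\iota_{A,G}$ via $c_2^X=\nabla_X\iota_{X,X}$ and naturality of $\iota$, then observe that this identity makes the commutativity of \eqref{PUdia} equivalent to the coequalizer condition $\langle f,g\rangle\iota_{A,G}=\langle f,g\rangle i_A\psi$. The only cosmetic difference is that you cite Proposition \ref{firstunivcondact} explicitly for existence, whereas the paper invokes the coequalizer universal property directly; uniqueness is handled identically via the strongly epimorphic pair $(l_\psi,s_\psi)$.
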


\begin{proof} Uniqueness follows from the fact that $(l_{\psi},s_{\psi})$ is a (strongly) epimorphic pair. To prove existence, by the definition of $A \rtimes_{\psi} G$ as a quotient of $A+ G$, we must show that commutativity of diagram \REF{PUdia} is equivalent with the relation $<f,g>\iota = <f,g>i_A{\psi}$. But $<f,g>\iota = \nabla (f+ g)\iota = \nabla\iota (f|g) =c_2^X (f|g)$. On the other hand, $<f,g>i_A{\psi} = f{\psi}$, whence the assertion.
\end{proof}

In particular, Proposition \ref{PU} shows that the semi-direct product  can be viewed as a universal  transformation of an abstract action into a conjugation action:

\begin{cor}\label{m=conj}  An  action ${\psi}:(A|G) \to A$   in $\C$ coincides with the restriction to $G$ of the conjugation action of $A\rtimes_{\psi} G$ on $A$, or formally, $c^{A,A\rtimes_{\psi} G} (1|s_{\psi}) = {\psi}$.
\end{cor}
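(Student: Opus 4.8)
The plan is to apply Proposition \ref{PU} to the particular action $\psi$ itself, taking for the target object $X = A\rtimes_{\psi}G$ and for the two maps $f = l_{\psi}\colon A\to A\rtimes_{\psi}G$ and $g = s_{\psi}\colon G\to A\rtimes_{\psi}G$. Since $h = \mathrm{id}_{A\rtimes_{\psi}G}$ obviously satisfies $hl_{\psi} = l_{\psi}$ and $hs_{\psi} = s_{\psi}$, the "only if" direction of Proposition \ref{PU} forces the square \REF{PUdia} to commute in this instance, which reads
\[
c_2^{A\rtimes_{\psi}G}\,(l_{\psi}|s_{\psi}) = l_{\psi}\,\psi\,.
\]
So the conjugation action $c_2^{A\rtimes_{\psi}G}$ of $A\rtimes_{\psi}G$ on itself, precomposed with $(l_{\psi}|s_{\psi})$, equals $l_{\psi}\psi$.

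Next I would relate $c_2^{A\rtimes_{\psi}G}(l_{\psi}|s_{\psi})$ to the conjugation action $c^{A,A\rtimes_{\psi}G}$ of $A\rtimes_{\psi}G$ on its proper subobject $A$ (proper because, $\psi$ being an action, $l_{\psi}$ is the kernel of $p_{\psi}$ by Proposition \ref{coeq}.3). By the defining property in Proposition \ref{conjnormsub}, writing $n = l_{\psi}$ for the inclusion of $A$, we have $l_{\psi}\,c^{A,A\rtimes_{\psi}G} = c_2^{A\rtimes_{\psi}G}(l_{\psi}|1)$. Precomposing with $(1_A|s_{\psi})\colon (A|G)\to (A|A\rtimes_{\psi}G)$ and using functoriality of the cross-effect, $(l_{\psi}|1)(1|s_{\psi}) = (l_{\psi}|s_{\psi})$, we obtain
\[
l_{\psi}\, c^{A,A\rtimes_{\psi}G}(1|s_{\psi}) = c_2^{A\rtimes_{\psi}G}(l_{\psi}|s_{\psi}) = l_{\psi}\,\psi\,.
\]
Since $l_{\psi}$ is a monomorphism, it may be cancelled on the left, yielding $c^{A,A\rtimes_{\psi}G}(1|s_{\psi}) = \psi$, which is exactly the claimed formula.

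The argument is essentially a bookkeeping exercise once the right instance of Proposition \ref{PU} is identified; the only genuinely substantive inputs are (i) that $l_{\psi}$ is a monomorphism and in fact $\ker p_{\psi}$, so that $A$ is a proper subobject of $A\rtimes_{\psi}G$ and $c^{A,A\rtimes_{\psi}G}$ is defined, and (ii) the compatibility of cross-effects with composition, $(l_{\psi}|1)(1|s_{\psi}) = (l_{\psi}|s_{\psi})$, which is immediate from the functoriality of $cr_2$ asserted right after the definition. The main point to get right — and the only place a slip is likely — is the direction in which Proposition \ref{PU} is used: here we do not construct $h$ from a commuting square but rather read off the square from the existence of the (trivially available) map $h=\mathrm{id}$; I would state this explicitly to avoid any apparent circularity. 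One should also double-check that the map denoted $(1|s_{\psi})$ in the statement is indeed $cr_2$ applied to the pair $(1_A, s_{\psi})$, so that the naturality square for $c^{A,-}$ from Proposition \ref{conjnormsub}, applied to the pair map $(s_{\psi}, 1_A)\colon (G, A)\to (A\rtimes_{\psi}G, A)$ — more precisely to $1_{A\rtimes_{\psi}G}$ restricted appropriately — is the identity we need; this is routine but worth a line.
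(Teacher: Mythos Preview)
Your proof is correct and follows exactly the same route as the paper's own argument: apply Proposition~\ref{PU} with $X=A\rtimes_{\psi}G$, $f=l_{\psi}$, $g=s_{\psi}$, $h=1$ to obtain $l_{\psi}\psi = c_2^X(l_{\psi}|s_{\psi})$, then factor $(l_{\psi}|s_{\psi}) = (l_{\psi}|1)(1|s_{\psi})$ and use the defining relation $l_{\psi}c^{A,X} = c_2^X(l_{\psi}|1)$ from Proposition~\ref{conjnormsub}, cancelling the monomorphism $l_{\psi}$. Your write-up is more detailed than the paper's two-line version, but the logical content is identical.
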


\begin{proof} In Proposition \ref{PU}, take $X=A\rtimes_{\psi}G$, $f=l_{\psi}$, $g=s_{\psi}$ and $h=1$. We get $l_{\psi}{\psi} = c_2^X(l_{\psi}|s_{\psi}) = c_2^X(l_{\psi}|1) (1|s_{\psi}) = l_{\psi}c^{A,X}(1|s_{\psi})$, whence the assertion since $l_{\psi}$ is monic.
\end{proof}

\medskip

To make further progress we define commutators of subobjects in terms of the cross-effects of the identity functor; this actually is the starting point of 
a new approach to categorical commutator calculus  which is further developped in \cite{CCC}. 

For $n\ge1$ and an object $X$ in $\mathbb C$, let $\nabla^n_X =<1,\ldots,1> :nX=X+\ldots+X \to X$ be the canonical folding map.


\begin{defi}\label{comdef} The $n$-fold commutator map of an object $X$ of $\mathbb C$ is the natural composite map
\[ \xymatrix{c_n^X : (X|\ldots|X) \ar@{{ >}->}[r]^{\iota} & X+\ldots+X \ar[r]^-{\nabla^n_X} & X}\]
Moreover, if $x_i: X_i \hookrightarrow X$ are subobjects of $X$, define their commutator to be the following subobject of $X$:
\[ [X_1,\ldots,X_n] = {\rm Im}((X_1|\ldots|X_n)\mr{(x_1|\ldots|x_n)}(X|\ldots|X) \mr{c_n^X} X).\]
\end{defi}

\begin{rem} With the notations of definition \ref{comdef}, the Huq commutator of $X_1,X_2$ is the proper closure of our commutator $[X_1,X_2]$ defined above (where the proper closure of a subobject is defined to be the kernel of its cokernel). Thus, if $X_1,X_2$ generate $X$ (i.e. if the map $< x_1,x_2>: X_1  + X_2 \to X$ is a regular epimorphism), then $[X_1,X_2]$ coincides with the commutators of $X_1,X_2$ of Huq and of Smith (the Smith case being due to Everaert and van der Linden (private communication)). In an unpublished note \cite{B}, Dominique Bourn gives a detailed proof of this fact in the special case when $X_1$ and $X_2$ are considered as subobjects of their sum $X_1 + X_2$.

\end{rem}


The link with the subject of this paper is that stability under the conjugation action can be expressed in terms of commutators:

\begin{lemma}\label{normalizeequ} Let $\xymatrix{
X\ar[r]^-{x} &A &Y\ar[l]_-y }$ be subobjects of an object of  $\mathbb C$. Then $X$ is $Y$-stable under the  conjugation  action of $A$ on itself (see  Proposition \ref{stabact}) iff $[X,Y]\subset X$, i.e. the injection of $[X,Y]$ into $A$ factors through $x$.
\end{lemma}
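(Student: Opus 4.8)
The statement to prove is Lemma \ref{normalizeequ}: for subobjects $x:X\to A$ and $y:Y\to A$, we have that $X$ is $Y$-stable under the conjugation action $c^{A,A}=c_2^A$ of $A$ on itself if and only if $[X,Y]\subset X$. The plan is to unwind both sides into statements about the single map $c_2^A(x|y):(X|Y)\to A$ and compare. Recall from Proposition \ref{stabact} that ``$X$ is $Y$-stable under $c_2^A$'' means precisely that this map $c_2^A(x|y):(X|Y)\to A$ factors through $x:X\to A$. On the other hand, by Definition \ref{comdef}, $[X,Y]$ is the image of the composite $(X|Y)\xrightarrow{(x|y)}(A|A)\xrightarrow{c_2^A}A$, so $[X,Y]\subset X$ means exactly that this image factors through $x$; and since $x$ is a monomorphism (it is a subobject), the image of a map factors through $x$ if and only if the map itself factors through $x$. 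Hence both conditions literally unwind to ``$c_2^A(x|y)$ factors through $x$'', and the equivalence is immediate.

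\textbf{Key steps, in order.} First I would recall that $c^{A,A}$ as defined in Proposition \ref{conjnormsub} coincides with $c_2^A$ (this is noted right after that proposition, and also $c_2^A=\nabla_A\iota_{A,A}$ from Example 1 after Proposition \ref{compar} / Definition \ref{comdef} with $n=2$). Second, spell out the $Y$-stability condition using Proposition \ref{stabact} with $\psi=c_2^A$, $b=x$, $h=y$: it says $c_2^A(x|y):(X|Y)\to A$ factors through $x$, i.e.\ there is $\psi':(X|Y)\to X$ with $x\psi'=c_2^A(x|y)$. Third, spell out $[X,Y]\subset X$ using Definition \ref{comdef}: $[X,Y]={\rm Im}\big((X|Y)\xrightarrow{(x|y)}(A|A)\xrightarrow{c_2^A}A\big)$, and note $c_2^A\circ(x|y)$ is exactly the map $c_2^A(x|y)$ from the previous step since $c_n^X$ in the definition for $n=2$ is $\nabla_A\iota=c_2^A$. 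Fourth, invoke the elementary fact (valid because $\mathbb C$ is regular and $x$ is monic) that for a morphism $g$ with codomain $A$, the image of $g$ is contained in the subobject $x$ if and only if $g$ factors through $x$; apply this with $g=c_2^A(x|y)$. Combining the second, third and fourth steps gives both conditions equivalent to ``$c_2^A(x|y)$ factors through $x$'', hence to each other.

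\textbf{Main obstacle.} There is essentially no obstacle: the lemma is a direct bookkeeping consequence of the definitions, as the author's own phrasing (``The link with the subject of this paper is that stability under the conjugation action can be expressed in terms of commutators'') suggests. The only point requiring a word of care is the image-versus-factorization equivalence in the fourth step; this is standard in a regular category and needs only that $x$ is a monomorphism, so that its image factorization is trivial and any regular-epi/mono factorization of $g$ comparing with $x$ forces $g$ through $x$. One should also be careful that the ``conjugation action of $A$ on itself'' in the lemma is literally $c^{A,A}$ from Proposition \ref{conjnormsub}, so that Proposition \ref{stabact} applies verbatim; this is exactly the remark made immediately after Proposition \ref{conjnormsub}. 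Everything else is formal.
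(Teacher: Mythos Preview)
Your proposal is correct and follows essentially the same approach as the paper: both reduce each condition to the assertion that $c_2^A(x|y):(X|Y)\to A$ factors through the monomorphism $x$. The paper makes this explicit via the image factorization $(X|Y)\twoheadrightarrow [X,Y]\rightarrowtail A$ and invokes Lemma~\ref{factorlem}, which is precisely the ``elementary fact'' you appeal to in your fourth step.
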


\begin{proof} Consider the following commutative diagram of solid arrows:

$$\xymatrix{
(X|Y)\ar@{.>}[dd]_{\psi}\ar[rr]^{(x|y)}\ar@{->>}[rd]^q&&(A|A)\ar[dd]^{c_2^A}\\
&[X,Y]\ar@{.>}[ld]_u\ar@{>->}[rd]^i\\
X\ar@{{ >}->}[rr]^x&&A
}$$
Then by the respective definitions, $X$ is $Y$-stable iff a map $\psi$ as indicated exists and renders the diagram commutative; and $[X,Y]\subset X$ iff a map $u$ as indicated exists and renders the diagram commutative. But these conditions are equivalent by Lemma \ref{factorlem} since $q$ is a regular epimorphism and $x$ is monic.
\end{proof}

\begin{defi}\label{normalize} Let $\xymatrix{
X\ar[r]^-{x} &A &Y\ar[l]_-y }$ be subobjects of an object of  $\mathbb C$. We say that $Y$ normalizes $X$ if $X$ is $Y$-stable under the  conjugation  action of $A$ on itself (i.e. if $[X,Y]\subset X$, in view of Lemma \ref{normalizeequ}).
\end{defi}

The following is an extremely useful criterion of normality in semi-abelian categories, as will be shown in the sequel and in subsequent work.

\begin{theorem}\label{propercrit} Suppose that $\mathbb C$ is semi-abelian. Then the following properties are equivalent for subobjects $X,Y$ of $A$ as above:

\begin{enumerate}

\item $Y$ normalizes $X$.

\item $X$ is a proper subobject of $X\vee Y$, the subobject generated by $X$ and $Y$ (i.e.\ the image of the map $<x,y>\colon X+Y \to A$).

\item The object $X\cap Y$ is proper in $Y$ and the sequence
\[ \xymatrix{0 \to X \ar[r]^{x'} &X\vee Y \ar[r]^-{qr_Y} & Y/X\cap Y \to 0}\]
is  short exact  
where $x'$ is the factorization of $x$ through $X\vee Y$ and $q\colon Y\to Y/X\cap Y$ is the projection.
\end{enumerate}

If one of these conditions (1)-(3) is satisfied we write $X\vee Y =[X]Y =Y[X]$.

\end{theorem}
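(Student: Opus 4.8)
The plan is to prove the cycle of implications $(1)\Rightarrow(2)\Rightarrow(3)\Rightarrow(1)$, exploiting the semi-abelian hypothesis (in particular that the regular-image factorization is well-behaved and that subobjects form a modular lattice in which Noether's isomorphism theorems hold). Throughout, write $j=<x,y>\colon X+Y\to A$ and let $\tilde j\colon X+Y\twoheadrightarrow X\vee Y$ be the regular epi part of its image factorization, so $X\vee Y$ is the codomain of $\tilde j$ and $x',y'$ denote the induced factorizations of $x,y$ through $X\vee Y$. The key observation that ties everything to the earlier machinery is that the restriction to $X\vee Y$ of its own conjugation action is essentially the map $c^{X, X\vee Y}$, and that being $Y$-stable (i.e.\ condition (1)) is by Lemma~\ref{normalizeequ} the assertion $[X,Y]\subset X$.

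\textbf{(1)$\Rightarrow$(2).} Assume $[X,Y]\subset X$. By Corollary~\ref{m=conj} and Proposition~\ref{PU}, the conjugation action $c^{X,X\vee Y}$ restricted along $Y\to X\vee Y$ gives, when $[X,Y]\subset X$, an action $\psi$ of $Y$ on $X$; by Proposition~\ref{stabact} and the universal property (Proposition~\ref{PU}) the canonical map $X\rtimes_{\psi}Y\to X\vee Y$ exists. The point is that this map is a regular epimorphism: it is $\overline{<x',y'>}$ and its composite with the (regular epi) quotient $X+Y\to X\rtimes_\psi Y$ is $\tilde j$, which is regular epi, so $X\rtimes_\psi Y\to X\vee Y$ is regular epi by the usual cancellation. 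But in a semi-abelian category the semidirect-product inclusion $l_\psi\colon X\to X\rtimes_\psi Y$ is a kernel (of $p_\psi$), and it is carried by this regular epi to the subobject $x'\colon X\to X\vee Y$; chasing kernels through the comparison of points (using that $X\rtimes_\psi Y\twoheadrightarrow X\vee Y$ induces an isomorphism on the $Y$-quotients) shows $x'$ is itself a kernel, i.e.\ $X$ is proper in $X\vee Y$. Concretely: the cokernel of $x'$ in $X\vee Y$ can be identified with the cokernel of $l_\psi$, namely $Y$ modulo the image of $[X,Y]$-type relations; one then verifies $x'$ is the kernel of $X\vee Y\to \mathrm{Coker}(x')$ because it is so after pulling back along the regular epi $X\rtimes_\psi Y\to X\vee Y$.

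\textbf{(2)$\Rightarrow$(3).} Suppose $x'\colon X\to X\vee Y$ is a kernel, say of $\pi\colon X\vee Y\to C$. Then $\pi y'\colon Y\to C$ has kernel $X\cap Y$ (intersection computed inside $X\vee Y$, equivalently inside $A$), because the kernel of $\pi y'$ is the pullback of $\ker\pi=X$ along $y'$, which is exactly $X\cap Y$. Hence $X\cap Y$ is a kernel of a map out of $Y$, i.e.\ proper in $Y$, and $Y/(X\cap Y)$ is the image of $\pi y'$ in $C$. Now one shows the image of $\pi y'$ is all of $C$: since $\tilde j$ is regular epi and $\pi\tilde j=\pi\tilde j$ factors through $<\pi x',\pi y'>=<0,\pi y'>$, the map $\pi$ factors as $X\vee Y\to \mathrm{Im}(\pi y')$, forcing $C=\mathrm{Im}(\pi y')=Y/(X\cap Y)$. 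This gives exactness of the displayed sequence with $q r_Y$ as the cokernel map (here protomodularity gives that a short sequence with a kernel on the left and the induced cokernel on the right, generated appropriately, is exact). The splitting is automatic only if one wants it, but the statement only asks for short exactness, which follows from $x'=\ker\pi$ and $\pi=\mathrm{coker}(x')$ up to the identification $C\cong Y/(X\cap Y)$.

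\textbf{(3)$\Rightarrow$(1).} Given the short exact sequence, $x'=\ker(qr_Y)$, so in particular $X$ is proper in $X\vee Y$; then $[X,Y]\subset[X\vee Y,X\vee Y]$ lands inside $X\vee Y$, and composing the defining map $c_2^{X\vee Y}$ with the projection $qr_Y\colon X\vee Y\to Y/(X\cap Y)$ kills it, because $q r_Y$ annihilates $x'$ and the commutator map $c_2^{X\vee Y}(x'|y')$ followed by $qr_Y$ equals $c_2^{Y/(X\cap Y)}$ applied to $(qr_Y x'|qr_Y y') = (0|\cdot)$, which vanishes by bireducedness (Proposition~\ref{elcrprops}(2)), exactly as in the proof of Proposition~\ref{conjnormsub}. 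Hence $[X,Y]$ factors through $\ker(qr_Y)=x'$, i.e.\ $[X,Y]\subset X$, which is condition (1) by Lemma~\ref{normalizeequ}.

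\textbf{Main obstacle.} The delicate point is (1)$\Rightarrow$(2): identifying the comparison map $X\rtimes_\psi Y\to X\vee Y$ and arguing that the kernel $l_\psi$ descends to a kernel in $X\vee Y$. This is where exactness (not just homological-ness) of $\mathbb C$ is genuinely used — one needs that regular epimorphisms are effective for descent of normal monos, equivalently that the pullback of a short exact sequence along a regular epi that is "compatible on the quotient" is again short exact. I would handle this via Proposition~\ref{KerImfxg} (kernels and images of semidirect products of maps), applied to the map of actions $\psi\to c^{X\vee Y}$ restricted appropriately, so that $\ker$ and $\mathrm{im}$ are computed levelwise; the image of $X\rtimes_\psi Y\to X\vee Y$ is then $X\vee Y$ itself and its "kernel part in the $A$-direction" is $X$, which forces $X$ to be normal in $X\vee Y$.
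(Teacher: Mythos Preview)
Your overall architecture matches the paper's: the equivalence $(2)\Leftrightarrow(3)$ is Noether's second isomorphism theorem (your direct argument for $(2)\Rightarrow(3)$ reproves it), and your $(3)\Rightarrow(1)$ is really $(3)\Rightarrow(2)\Rightarrow(1)$, the last step being exactly the argument of Proposition~\ref{conjnormsub}, which is what the paper invokes.

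The one place where you diverge from the paper, and where you yourself flag trouble, is $(1)\Rightarrow(2)$. Your setup is identical to the paper's: from $[X,Y]\subset X$ you get an action $\psi$ of $Y$ on $X$ (Proposition~\ref{stabact}), and then $\overline{<x,y>}\colon X\rtimes_\psi Y\to A$ by Proposition~\ref{PU}, whose image is $X\vee Y$. The paper then finishes in one line: $l_\psi$ is proper in $X\rtimes_\psi Y$, and in a \emph{Barr-exact} category the image of a proper (i.e.\ normal) subobject under a regular epimorphism is again proper. Applying this to the regular epi $X\rtimes_\psi Y\twoheadrightarrow X\vee Y$ and the proper subobject $l_\psi$ gives immediately that $x'=X$ is proper in $X\vee Y$. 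That single fact about exact categories is exactly what semi-abelianness buys here, and it replaces your entire ``descent of kernels'' discussion.

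By contrast, your proposed route through Proposition~\ref{KerImfxg} does not obviously apply: that proposition computes kernels and images of a map $f\rtimes g$ between two semidirect products, but the comparison map $X\rtimes_\psi Y\to X\vee Y$ (or to $A$) is not of that form unless you already know $X$ is proper in $X\vee Y$, which is what you are trying to prove. Your alternative phrasing (``$x'$ is a kernel because it is so after pulling back along the regular epi'') can be made to work in a semi-abelian category, but it is just a reformulation of the same exactness fact the paper uses; it is not a genuinely different mechanism. So there is no real gap, but you should replace the ``Main obstacle'' paragraph by the one-line exactness argument above.
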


We note that the implication (1) $\Rightarrow$ (3) is a crucial ingredient in our commutator theory in \cite{CCC}.\medskip



\noindent{\bf Proof:}
The equivalence of (2) and (3) is immediate using the second Noether isomorphism theorem. Moreover, (2) implies (1) by Proposition \ref{conjnormsub}, so it remains to show that (1) implies (2). By Proposition \ref{stabact} we obtain an action $\psi$ of $Y$ on $X$, and the universal property (Proposition \ref{PU}) of the semi-direct product implies that there is a map $\overline{<x,y>}\,\colon\, X\rtimes_{\psi} Y \to A$. Now ${\rm Im}(\overline{<x,y>}) = {\rm Im}({<x,y>}) = X\vee Y$ since by construction $X\rtimes_{\psi} Y$ is a quotient of $X+Y$. But ${\rm Im}(l_{\psi})$ is proper in 
$X\rtimes_{\psi} Y$, whence ${\rm Im}(\overline{<x,y>} l_{\psi})={\rm Im}(x) =X$ is proper in 
${\rm Im}(\overline{<x,y>}) = X\vee Y$ by exactness of $\mathbb C$.\hfill$\Box$\medskip

\begin{cor}\label{critprop} A finitely cocomplete homological category $\mathbb C$ is semi-abelian (i.e.\ Barr exact) iff the following condition holds:\medskip

\noindent(P) A subobject $X$ of an object $A$ of $\mathbb{C}$ is proper in $A$ iff it is stable under the conjugation action of $A$, i.e.\ if $[X,A]\subset X$. 

\end{cor}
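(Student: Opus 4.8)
\textbf{Proof plan for Corollary \ref{critprop}.}
The strategy is to deduce both implications from Theorem \ref{propercrit} and its proof, together with the standard characterization of Barr-exactness of a homological category. First I would record that the ``only if'' direction is essentially already contained in Proposition \ref{conjnormsub}: if $X$ is proper in $A$, then it has a cokernel, and the argument of Proposition \ref{conjnormsub} (applied with $N=X$, $E=A$) produces the conjugation action $c^{X,A}\colon (X|A)\to X$ with $x\,c^{X,A}=c_2^A(x|1)$, which by Lemma \ref{normalizeequ} is exactly the assertion $[X,A]\subset X$. So (P) holds in its trivial direction in \emph{any} finitely cocomplete homological category; the content of (P) is entirely in the ``if'' direction, and this is where semi-abelianness must be used both ways.

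For the ``if'' direction of the corollary (assume (P), show $\mathbb C$ is exact), the plan is to verify that every equivalence relation in $\mathbb C$ is effective; equivalently, by protomodularity it suffices to show that every internal reflexive relation that is a ``formal kernel'' is a genuine kernel, or one may use the known fact (Bourn) that a finitely cocomplete homological category is semi-abelian iff for every split epimorphism the associated comparison behaves well --- but the cleanest route is to use that exactness of a homological category is equivalent to: for every subobject $X\hookrightarrow A$ whose image under the conjugation action is contained in $X$, $X$ is already proper. Concretely, take a reflexive graph / equivalence relation $R\rightrightarrows A$; its ``difference kernel'' $X=\mathrm{Im}(\langle \text{something}\rangle)$ sits inside $A\times A$ or inside $A$, one checks it is stable under conjugation using naturality of $c_2$, hence by (P) it is proper, hence the relation is effective. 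I would spell this out by choosing $X$ to be the normal closure data coming from $R$ and invoking Lemma \ref{normalizeequ} to translate stability into $[X,A]\subset X$, which holds automatically for such $X$ because it arises as a commutator-type image.

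For the converse (assume $\mathbb C$ semi-abelian, derive (P)): the ``only if'' part of (P) is the trivial direction just discussed. For the ``if'' part --- stable under conjugation $\Rightarrow$ proper --- I would simply invoke Theorem \ref{propercrit}: taking $Y=A$ in that theorem, condition (1) ``$A$ normalizes $X$'' is literally ``$[X,A]\subset X$'', and since $X\vee A=A$, condition (2) reads ``$X$ is proper in $A$''; the implication (1)$\Rightarrow$(2) of Theorem \ref{propercrit} is exactly what is needed, and it is there that Barr-exactness is consumed (via ``${\rm Im}(l_\psi)$ is proper in $X\rtimes_\psi A$, hence its image is proper by exactness''). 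So in this direction the corollary is an immediate specialization.

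\textbf{Main obstacle.} The genuinely nontrivial half is ``$\mathbb C$ satisfies (P) $\Rightarrow$ $\mathbb C$ is exact'', since the reverse uses Theorem \ref{propercrit} off the shelf. The difficulty is to manufacture, from an arbitrary (effective-or-not) equivalence relation, a subobject $X$ whose conjugation-stability is automatic yet whose properness forces effectivity; the honest way is to check that the relevant $X$ is the image of a commutator map $c_n^A$ restricted to appropriate subobjects, so that $[X,A]\subset X$ comes for free from functoriality of cross-effects (Propositions \ref{elcrprops}, \ref{cr-reg-epi}, \ref{conjnormsub}), and then to see that (P) supplies exactly the missing properness needed to split/close the coequalizer presenting $A/R$. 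I expect this identification of the correct $X$ and the bookkeeping with $X\vee Y$ and second Noether to be the only place requiring real care; everything else is a direct appeal to the results already proved.
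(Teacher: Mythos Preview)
Your treatment of the direction ``$\mathbb C$ semi-abelian $\Rightarrow$ (P)'' is fine and matches the paper: the nontrivial half of (P) is exactly Theorem \ref{propercrit} with $Y=A$, and the trivial half is Proposition \ref{conjnormsub}.

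The gap is in the direction ``(P) $\Rightarrow$ $\mathbb C$ exact''. You propose to work directly with an equivalence relation $R$ on $A$, extract its zero-class $X$ (a normal subobject), argue that $X$ is automatically conjugation-stable, and then invoke (P) to get that $X$ is proper, hence $R$ effective. The problem is the step ``$X$ normal $\Rightarrow$ $[X,A]\subset X$'': this is precisely the implication (2')$\Rightarrow$(1) (with $Y=A$) that the paper explicitly says it does \emph{not} know how to prove in an arbitrary finitely cocomplete homological category (see the discussion following Corollary \ref{critprop}). Your heuristic that such an $X$ ``arises as a commutator-type image'' so that stability ``comes for free'' is not justified: the zero-class of a general equivalence relation has no reason to be the image of some $c_n^A(x_1|\ldots|x_n)$. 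So as written, your plan uses an unproven lemma at its crux.

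The paper avoids this by using a different, known characterization of Barr-exactness for homological categories: $\mathbb C$ is exact iff the direct image of a \emph{proper} subobject along a regular epimorphism is again proper. Concretely, given $\xymatrix{X\ar@{{ |>}->}[r]^-{x}&A\ar@{->>}[r]^-{q}&B}$, one must show $qX$ is proper in $B$. Since $X$ is already proper, Proposition \ref{conjnormsub} supplies the conjugation action $c^{X,A}$; naturality of $c_2$ and the fact that $(q|q_X)$ is a regular epimorphism (Propositions \ref{cr-reg-epi} and \ref{elcrprops}(3)) then force $c_2^B(1|\bar{x})$ to factor through $\bar{x}$, i.e.\ $[qX,B]\subset qX$. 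Now (P) gives $qX$ proper, and exactness follows. The point is that this route only ever asks for conjugation-stability of subobjects that are \emph{already known to be proper} (or images of such under regular epis), so Proposition \ref{conjnormsub} applies; your route needs it for merely normal subobjects, which is open.
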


The result that property (P) holds in semi-abelian categories was recently also obtained independantly by Mantovani and Metere \cite{MM}.

\begin{proof} The condition is necessary by Theorem \ref{propercrit} (take $Y=A$); let us prove that it is sufficient: to show that $\mathbb{C}$ is exact, let 
$\xymatrix{ X\ar@{{ |>}->}[r]^-{x} &A\ar@{-{>>}}[r]^-q &B }$ be a proper subobject and a regular epimorphism in 
$\mathbb{C}$. Let $\xymatrix{X \ar@{>>}[r]^-{q_X} & qX \ar@{{ >}->}[r]^-{\bar{x}} & B}$ be an image factorization of $qx$. Then we get a commutative 
diagram
\[ \xymatrix{
(X|A) \ar[dd]^{(q|q_X)} \ar[rr]^{c^{A,X}} \ar[rd]^{(1|x)}
& & X\ar[dd]^{q_X} \ar[rd]^{x} \\
 & (A|A) \ar[dd]^{(q|q)} \ar[rr]^{c_2^A} & & A\ar[dd]^q \\
 (B|qX) \ar[dr]^{(1|\bar{x})} & & qX \ar[dr]^{\bar{x}} & \\
  & (B|B) \ar[rr]^{c_2^B} & & B
}\]
It implies that $c_2^B(1|\bar{x})(q|q_X) = \bar{x}q_Xc^{A,X}$, whence $c_2^B(1|\bar{x})(q|q_X)$ factors through $\bar{x}$. But $(q|q_X)$ is a regular epimorphism by Propositions \ref{cr-reg-epi} and \ref{elcrprops}.(3), whence $c_2^B(1|\bar{x})$ also factors through $\bar{x}$, compare the proof of Lemma \ref{normalize}. Consequently $qX$ is stable under the conjugation action of $B$, thus it is proper by hypothesis.
\end{proof}

Note that since in a semi-abelian category ``proper subobject'' is equivalent to ``normal subobject'', the equivalence of (1) and (2) in Theorem \ref{propercrit} may obviously be stated as the equivalence of (1) and the following \\

\noindent(2'): {\em  $X$ is a normal subobject of $X\vee Y$.}\\

The proof of (1) $\Rightarrow$ (2) in a semi-abelian category indeed provides a proof of (1) $\Rightarrow$ (2') in any finitely cocomplete homological category, since in any such category the image of any proper subobject (or even normal subobject) by a regular epimorphism is a normal subobject. We do not know if the converse (2') $\Rightarrow$ (1) is true in any such category, but the following example shows that the latter implication may be true even if the category is not exact (hence the existence of the conjugation action of an object $X$ on a normal subobject $N$ does not imply in general that this subobject is proper, by corollary \ref{critprop}).

First note that (2') $\Rightarrow$ (1) holds in a finitely cocomplete homogical category if and only if it holds in the special case when $Y=A$ (and $y=1_A$), i.e. if and only if any object acts by conjugation on any of its normal subobjects. Indeed, suppose that this is true, and let $x:X\rightarrow A$ and $y:Y\rightarrow A$ be subobjects such that $X$ is normal in $X\vee Y$. Then $X\vee Y$ acts on $X$ by conjugation, by hypothesis. So we get an action $\psi : (X|X\vee Y)\rightarrow X$ which is the restriction on $X$ of the conjugation action of $X\vee Y$ on itself. Then by precomposition with $(1|y')$ (where $y'$ denotes the inclusion of $Y$ in $X\vee Y$) we get a map $(X|Y)\rightarrow X$ which is the restriction to $(X|Y)$ of the conjugation action of $X\vee Y$ on itself, but also of $A$ on itself.

Consider the category $\mathbb C$ whose objects are ordered pairs $\mathcal G=(G,B)$ where $G$ is a group and $B$ a subgroup. A map $f:\mathcal G=(G,B)\rightarrow\mathcal H=(H,C)$ is a group homomorphism $f:G\rightarrow H$ such that $f(B)\subseteq C$. We show that in this category, any object $\mathcal G$ acts by conjugation on any of its normal subobjects. This category is (finitely) cocomplete homological and is provided with two ``forgetful'' functors $U_1$ and $U_2$ to the category of groups: $U_1(G,B)=G$, $U_1(f)=f$, $U_2(G,B)=B$, $U_2(f)=f|_B$. Both $U_1$ and $U_2$ preserve finite limits and coproducts. So for instance, $(\mathcal G|\mathcal H)$= $((G|H),(B|C))$ (where in the second member of the equality the $(.|.)$'s are computed in the category of groups). However, only $U_1$ preserves coequalizers: for instance, the cokernel of the inclusion of $(N,C)$ in $(G,B)$, where $N$ is, say, a normal subgroup of $G$ and $C$ a subgroup of $B$ (possibly, but not necessarily normal in $B$) is independent of $C$, it is $(G/N, B/(N\cap B))$. In particular, $(N,C)$ is not the kernel of its cokernel unless $C=N\cup B$, whence $\mathbb C$ is not semi-abelian. Now, an equivalence relation on $(G,B)$ in $\mathbb C$ is (the inclusion into $(G\times G,B\times B)$ of) a pair $(R,S)$, where $R$ is a congruence on $G$, $S$ is a congruence on $B$, and $S\subseteq R$. Like in any pointed protomodular category, a normal subobject of $(G,B)$ is ``the equivalence class of 0 for such an equivalence relation'', i.e. the inverse image of $R$ by the inclusion $\sigma_1$ (or equivalently $\sigma_2$) of ($G,B)$ in $(G,B)\times (G,B)$, i.e. the pair $\mathcal N=([e]_R,[e]_S)$ where $e$ is the unit of $G$ and $[e]_R$ its equivalence class (in $G$) for $R$, and $[e]_S$ in $B$ for $S$). The object $(\mathcal G|\mathcal G)$ is the pair $((G|G),(B|B))$; the elements of $(G|G)$ are products (in $G+G$) of the form $\prod_i[g_i,g'_i]^{z_i}_+$ with $g_i$ and $g'_i$ are in the two different copies of $G$, and where $[-,-]_+$ denotes commutators in $G+G$; and $(B|B)$ is the same with the $g_i$'s and $g'_i$'s in (the two copies of) $B$. The diagram that has to be filled up to get the result is the following:
$$\xymatrix{(\mathcal N|\mathcal G)=(([e]_R|G),([e]_S|A))\ar@{.>}[d]\ar[rr]&&(\mathcal G|\mathcal G)=((G|G),(A|A))\ar[d]^{c_2^{\mathcal G}}\\
\mathcal N=([e]_R,[e]_S)\ar[rr]&&\mathcal G=(G,A)}$$%
where the horizontal arrows are the  obvious inclusions. %
Moreover, $c_2^{\mathcal G}(\prod_i[g_i,g'_i]^{z_i}_+)=\prod_i[g_i,g'_i]^{z_i}$, where $[-,-]$ denotes the ordinary commutator in $G$. %
So what has to be proved is that if the  $g'_i$'s are in $[e]_R$ then $\prod_i[g_i,g'_i]^{z_i}$ is in $[e]_R$, and that if moreover the $g'_i$'s are in $[e]_S$ (hence in $A$) and the $g_i$'s are in $A$, then $\prod_i[g_i,g'_i]^{z_i}$ is in $[e]_S$. This is obvious, since $R$ and $S$ are congruences on $G$ and on $A$ respectively.

\section{Other characterizations of actions}

In this paragraph $\mathbb C$ remains a finitely cocomplete homological category, even if, at some places, we shall pay special attention to the  semi-abelian case. The functor $\Xi_G$ of Proposition \ref{compar} associates to any action $\psi$ (in our sense) of an object $G$ of $\mathbb C$ on another object $A$ an algebra $(A,\xi)$ over the monad $\mathbb T_G$, which moreover makes the following diagram commute.
$$\xymatrix{(A|G)\ar[rr]^j\ar[rd]_{\psi}&&T_G(A)\ar[ld]^{\xi}\\
&A}$$
Using our preceeding results, we here give  necessary and sufficient conditions for an arbitary  morphism $\psi:(A|G)\rightarrow A$ to have such an extension to an algebra over $\mathbb T_G$, so they are necessary for $\psi$ to be an action. Hence if the category $\mathbb C$ is such that the comparison functor $\mathcal J_G$ is an equivalence of categories between $\mbox{Pt}_G(\mathbb C)$ and $\mathbb C^{\mathbb T_G}$  (in particular if $\mathbb C$ is semi-abelian) they are also sufficient, but we also give a direct proof of this fact in a semi-abelian category, providing an alternative proof (without  using Beck's criterion) of the fact that $\mathcal J_G$ is an equivalence of categories in this case. 

The object $G$ is fixed throughout this section; we therefore simplify the notations,   denoting by $\mathbb T$ the monad, by $T$ the underlying endofunctor of $\mathbb C$, and by $\eta$ and $\mu$ the unit and the multiplication of $\mathbb T$. We divide the question of the existence of an extension of a  map $\psi:(A|G)\rightarrow A$ to an algebra over $\mathbb T$ in two parts:

\begin{prop}\label{extunit}
Let $\psi$ be a map $(A|G)\rightarrow A$ in $\mathbb C$. Then the following conditions are equivalent:

\noindent 1. $\psi$ can be extended along $j$ to a map $\xi:T(A)\rightarrow A$ satisfying the unit axiom of a $\mathbb T$-algebra, i.e. $\eta_A.\xi=1_A$.

\noindent 2. The following diagram commutes:
\begin{equation}\label{extofpsidia}
\xymatrix{
((A|G)|A)\ar[r]^{\psi_0}\ar[d]_{(\psi|1_A)}&(A|G)\ar[d]^{\psi}\\
(A|A)\ar[r]_{c_2^A}&A
}\end{equation}
where $\psi_0$ is the action of $A$ on $(A|G)$ which arises form the fact that  $T(A)$ is the semi-direct product of $(A|G)$ and $A$ (see Lemma \ref{inclcrosseff} and Example 2 in section \ref{genprop}). Recall that in view of Corollary \ref{m=conj}, $\psi_0$ is the restriction to $A$ of the conjugation action of $T(A)$ on $(A|G)$, which itself is the restriction to $T(A)$ of the conjugation action of $A+G$ on $(A|G)$.

Moreover under these conditions,  the resulting map $\xi$ also satisfies $l_{\psi}.\xi=q_{\psi}.k$, where  $q_{\psi}:A+G\rightarrow Q_{\psi}$ is the coequalizer of $\iota_{A,G}$ and $i_A.\psi$ and $l_{\psi}$is the composite $q_{\psi}.i_A$ as in Proposition \ref{coeq}. \end{prop}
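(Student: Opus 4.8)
The plan is to prove the equivalence of conditions 1 and 2 by exploiting the description, established in Lemma~\ref{inclcrosseff} and Example~2 of section~\ref{genprop}, of $T(A)$ as the semi-direct product $(A|G)\rtimes_{\psi_0} A$, together with the universal property of the semi-direct product (Proposition~\ref{PU}). Concretely, a map $\xi\colon T(A)\to A$ satisfying the unit axiom $\xi\eta_A = 1_A$ is, by Corollary~\ref{restract}, the same thing as its restriction $\psi = \xi j$ along $j\colon (A|G)\to T(A)$; so condition~1 says precisely that the map $\psi\colon (A|G)\to A$ together with the map $1_A\colon A\to A$ can be coherently amalgamated into a morphism $T(A) = (A|G)\rtimes_{\psi_0} A \to A$. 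By Proposition~\ref{PU} (applied with the action $\psi_0$ of $A$ on $(A|G)$, the target object $X=A$, and the two maps $\psi\colon (A|G)\to A$ and $1_A\colon A\to A$), such an amalgamation exists if and only if the square
\[
\xymatrix{
((A|G)|A)\ar[r]^-{\psi_0}\ar[d]_{(\psi|1_A)} & (A|G)\ar[d]^{\psi}\\
(A|A)\ar[r]_-{c_2^A} & A
}
\]
commutes, which is exactly diagram~\REF{extofpsidia}. This gives both implications at once.

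The first step is therefore to set up the identification $T(A) = (A|G)\rtimes_{\psi_0} A$ carefully: by Lemma~\ref{inclcrosseff} the split short exact sequence $0\to (A|G)\mr{j} T(A)\mr{r_A k} A\to 0$ corresponds, under Proposition~\ref{equivact}, to an action $\psi_0$ of $A$ on $(A|G)$, and the resulting $l_{\psi_0}\colon (A|G)\to (A|G)\rtimes_{\psi_0} A$ and $s_{\psi_0}\colon A\to (A|G)\rtimes_{\psi_0} A$ are identified with $j$ and $\eta_A$ respectively. The second step is to unwind what Proposition~\ref{PU} yields in this situation: the amalgamated map $h$ (if it exists) satisfies $hl_{\psi_0} = \psi$, i.e.\ $hj = \psi$, and $hs_{\psi_0} = 1_A$, i.e.\ $h\eta_A = 1_A$; so $h$ is exactly an extension $\xi$ of $\psi$ along $j$ satisfying the unit axiom, and conversely any such $\xi$ is such an $h$. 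The content of Proposition~\ref{PU} is that the existence (and uniqueness) of $h$ is equivalent to commutativity of the displayed square with $(f,g)=(\psi,1_A)$; one checks directly that $(f|g) = (\psi|1_A)$ here, so the square is literally~\REF{extofpsidia}.

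For the final clause, one must verify that the extension $\xi$ thus obtained satisfies $l_{\psi}\xi = q_{\psi}k$. Here I would argue as in the proof of Proposition~\ref{compar}: one has $p_{\psi}q_{\psi}k = r_G k = 0$, so $q_{\psi}k$ factors (uniquely, since $l_{\psi}$ is monic by Proposition~\ref{coeq} whenever $\psi$ is an action — but note that at this stage $\psi$ need not be an action, so one must instead observe that $q_\psi k$ factors through $l_\psi = q_\psi i_A$ because, precomposing with the regular epi $\langle j,\eta_A\rangle\colon (A|G)+A\to T(A)$, one has $q_\psi k j = q_\psi \iota = q_\psi i_A\psi$ and $q_\psi k\eta_A = q_\psi i_A$, both factoring through $l_\psi$) as $l_{\psi}h'$ for a unique $h'\colon T(A)\to A$; then it suffices to show $h'j = \psi$ and $h'\eta_A = 1_A$, which follows by composing $l_\psi h' = q_\psi k$ with $j$ and with $\eta_A$ and using $k j = \iota$, $q_\psi\iota = q_\psi i_A\psi = l_\psi\psi$, $k\eta_A = i_A$, and that $l_\psi$ is monic — whence $h' = \xi$ by the uniqueness in Corollary~\ref{restract}.

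The main obstacle is bookkeeping rather than conceptual: one has to be scrupulous about the fact that $\psi$ is \emph{not} assumed to be an action in Proposition~\ref{extunit}, so several factorization arguments that in earlier propositions relied on $l_{\psi}$ being a kernel (hence monic, via Proposition~\ref{coeq}) must be re-derived from the weaker facts that $(j,\eta_A)$ is a strongly epimorphic pair and that $l_\psi = q_\psi i_A$ is monic — the latter being genuinely available since $q_\psi i_A$ monic is exactly the defining condition of an action, but which is \emph{not} assumed here. Hence the cleanest route is to keep everything on the level of the universal coequalizer $q_\psi$: the equivalence of 1 and 2 goes through Proposition~\ref{PU} with target $X=A$ and needs no monicity hypothesis, and the identity $l_\psi\xi = q_\psi k$ is then extracted by the factorization argument just sketched, which again uses only that $(j,\eta_A)$ is strongly epimorphic together with $kj=\iota$ and $k\eta_A = i_A$.
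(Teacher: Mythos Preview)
Your treatment of the equivalence $1\Leftrightarrow 2$ is correct and is exactly the paper's argument: apply Proposition~\ref{PU} to the semi-direct product description $T(A)\cong (A|G)\rtimes_{\psi_0}A$ coming from Lemma~\ref{inclcrosseff}, with $(f,g)=(\psi,1_A)$ and $X=A$. Nothing more is needed there.

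The proof of the final clause $l_\psi\xi=q_\psi k$, however, contains a genuine gap. You attempt to first factor $q_\psi k$ through $l_\psi$ as $l_\psi h'$, and then identify $h'$ with $\xi$; but to extract $h'j=\psi$ and $h'\eta_A=1_A$ from $l_\psi h'j=l_\psi\psi$ and $l_\psi h'\eta_A=l_\psi$ you explicitly invoke ``$l_\psi$ is monic''. As you yourself note in the closing paragraph, monicity of $l_\psi=q_\psi i_A$ is precisely the definition of $\psi$ being an action, and this is \emph{not} assumed in Proposition~\ref{extunit}. So the argument as written is circular on this point (and the alternative factorization you sketch via precomposition with $\langle j,\eta_A\rangle$ does not produce a well-defined $h'$ without already knowing that $\langle\psi,1_A\rangle$ descends along $\langle j,\eta_A\rangle$, which is just the construction of $\xi$ again).

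The paper's route avoids this entirely by reversing the logic: one already has $\xi$ in hand, so one simply verifies the identity $l_\psi\xi=q_\psi k$ directly. Since $(j,\eta_A)$ is a jointly (strongly) epimorphic pair, it suffices to check $l_\psi\xi j=q_\psi kj$ and $l_\psi\xi\eta_A=q_\psi k\eta_A$. The first reads $l_\psi\psi=q_\psi\iota$, i.e.\ $q_\psi i_A\psi=q_\psi\iota$, which holds by definition of $q_\psi$ as the coequalizer; the second reads $l_\psi=q_\psi i_A$ (using $\xi\eta_A=1_A$ and $k\eta_A=i_A$), which is the definition of $l_\psi$. No monicity of $l_\psi$ is required.
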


\begin{proof} That some $\xi$ extends $\psi$ along $j$ with $\xi.\eta_A=1_A$ can be translated into the following diagram:

$$\xymatrix{
(A|G)\ar[r]^j\ar[rd]_{\psi}&T(A)\ar[d]^{\xi}&A\ar[l]_{\eta_A}\ar@{=}[ld]\\
&A
}$$
so  by Proposition \ref{PU}, it is immediate that such an extension exists if and only   diagram (\ref{extofpsidia}) commutes. Moreover,  since the pair $(j,\eta_A)$ is a (strongly) epimorphic family, in order to prove that  $l_{\psi}.\xi=q_{\psi}.k$, it suffices to verify that $l_{\psi}.\xi.j=q_{\psi}.k.j$ and $l_{\psi}.\xi.\eta_A=q_{\psi}.k.\eta_A$. The first equality amounts to $q_{\psi}i_A.\psi=q_{\psi}.\iota$, which is true because $q_{\psi}$ is the coequalizer of these two maps, and the second to $q_{\psi}.i_A.\xi.\eta_A=q_{\psi}.k.\eta_A$, which is true because $\xi.\eta_A=1_A$ and $k.\eta_A=i_A$. 

\end{proof}

\noindent{\bf Example:} In the category of groups, consider a homomorphism $\psi:(A|G)\rightarrow A$. Since $(A|G)$ is the free group on $A^*\times G^*$, $\psi$ is the extension of a  unique set-application $\phi':A^*\times G^*\rightarrow A$. Let us define  $\phi:A\times G\rightarrow A$ by $\phi(e_G,a)=a$, $\phi(g,e_A)=e_A$ (where $e_G$ and $e_A$ denote the units\footnote{We prefer to denote the units this way, to avoid confusion with identities; but when dealing with maps, we denote by $0$ the zero maps, and by $1_X$ the identity on $X$}, and $\phi(g,a)=\phi'(g,a).a$ for non-trivial $g,a$ (so that of course $\psi([g,a])=\phi'(g,a)=\phi(g,a).a^{-1}$ for nonzero $g,a$). We know that $\psi$ is an action in our sense if and only if $\phi$ is an action in the usual sense. More generally, and rather starting our from $\phi$, one may determine the condition on $\phi$ which insures that $\psi$ satisfies the equivalent conditions of Proposition \ref{extunit}. In view of the proof of this proposition, it is more convenient to apply the classical universal property of the semi-direct product than condition 2. Considering that the action $\psi_0$ of Proposition \ref{extunit} corresponds to a classical action $\phi_0$, the property that is needed to insure the existence of $\xi$ is: for any $x\in(A|G),\psi(\phi_0(a,x))=a.\psi(x).a^{-1}$ (note that we indeed  work with $\psi$ on one hand, and with $\phi_0$ on the other hand). But since $\phi_0(a,-)$ and conjugation by $a$ both are group morphisms, it suffices to prove this for an $x$ of the form $[g,a']$. And $\phi_0(a,[g,a'])=a.[g,a'].a^{-1}$ (in $T(A)$, or in $A+G$). But, working in $A+G$, using the well-known formula $x.[y,z]=[x,y].[y,xz].x$ (or equivalently $[x,[y,z]]=[x,y][y,xz][z,y]$), one gets: 
$a.[g,a'].a^{-1}=[a,g][g,aa']a.a^{-1}=[g,a]^{-1}[g,aa']$.
So since $\psi$ is a morphism and since $\psi([g,a])=\phi(g,a).a^{-1}$, one gets $\psi(\phi_0(a,[g,a']))=\psi([g,a])^{-1}.\psi([g,aa'])=a(\phi(g,a))^{-1}\phi(g,aa')(aa')^{-1}$ and $a\psi[g,a']a^{-1}=a\phi(g,a')a'^{-1}a^{-1}=a\phi(g,a')(aa')^{-1}$. So these terms are equal if and only if $(\phi(g,a))^{-1}\phi(g,aa')=\phi(g,a')$, or $\phi(g,aa')=\phi(g,a)\phi(g,a')$, i.e. $\phi(g,-)$ is an endomorphism of $A$.\medskip

We now give a  characterization of actions in terms of  extensions to $T(A)$:

\begin{prop}\label{kerxi}
Let $\psi$ be any map $(A|G)\rightarrow A$. Then, using the same notations as above, the following properties are equivalent:

\noindent 1) $\psi$ is an action, i.e. $l_{\psi}$ is a monomorphism;

\noindent 2) $\psi$ satisfies the equivalent conditions of Proposition \ref{extunit} and the arising $\xi:T(A)\rightarrow A$ is such that $\mbox{\em Ker }\xi$ is proper in $A+G$.
\end{prop}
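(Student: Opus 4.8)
The plan is to compare the subobjects $\mathrm{Ker}\,\xi$ and $\mathrm{Ker}(q_{\psi})$ of $A+G$ by means of the relation $l_{\psi}.\xi=q_{\psi}.k$ supplied by Proposition \ref{extunit}. Both implications reduce at once to the situation in which $\psi$ satisfies the conditions of Proposition \ref{extunit}: this is part of (2), and for (1) it is a consequence of Proposition \ref{compar}, since an action extends to a $\mathbb{T}$-algebra, which in particular satisfies the unit axiom. So from now on $\xi\colon T(A)\to A$ denotes the resulting map, with $\xi.j=\psi$, $\xi.\eta_A=1_A$ and $l_{\psi}.\xi=q_{\psi}.k$.

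The main preliminary step is to establish the exact sequence
\[ 0\longrightarrow \mathrm{Ker}\,\xi\longrightarrow \mathrm{Ker}(q_{\psi})\longrightarrow \mathrm{Ker}(l_{\psi})\longrightarrow 0\,. \]
Since $\xi$ is split by $\eta_A$, it is a regular epimorphism; pulling back the subobject $\mathrm{Ker}(l_{\psi})\hookrightarrow A$ along $\xi$ therefore yields an exact sequence $0\to \mathrm{Ker}\,\xi\to \mathrm{Ker}(l_{\psi}.\xi)\to \mathrm{Ker}(l_{\psi})\to 0$. On the other hand $k$ is a monomorphism and $\mathrm{Ker}(q_{\psi})$ is contained in $T(A)=\mathrm{Ker}(r_G)$ (because $r_G=p_{\psi}.q_{\psi}$ by Proposition \ref{coeq}), so $\mathrm{Ker}(l_{\psi}.\xi)=\mathrm{Ker}(q_{\psi}.k)=\mathrm{Ker}(q_{\psi})$; the displayed sequence follows, and its first map, being induced by $k$, exhibits $\mathrm{Ker}\,\xi$ as a subobject of $A+G$ contained in $\mathrm{Ker}(q_{\psi})$.

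Granting this, $(1)\Rightarrow(2)$ is immediate: if $\psi$ is an action then $l_{\psi}$ is a monomorphism, so $\mathrm{Ker}(l_{\psi})=0$, the map $\mathrm{Ker}\,\xi\to \mathrm{Ker}(q_{\psi})$ is an isomorphism, and hence $\mathrm{Ker}\,\xi$, seen inside $A+G$, coincides with $\mathrm{Ker}(q_{\psi})$, which is proper in $A+G$ as a kernel of $q_{\psi}$. For $(2)\Rightarrow(1)$ it suffices, by the displayed exact sequence, to prove the reverse inclusion $\mathrm{Ker}(q_{\psi})\subset \mathrm{Ker}\,\xi$ in $A+G$, since then $\mathrm{Ker}(l_{\psi})=0$ and $l_{\psi}$ is a monomorphism. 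As $\mathrm{Ker}\,\xi$ is proper in $A+G$ by hypothesis, write $\mathrm{Ker}\,\xi=\mathrm{Ker}(\rho)$ for some $\rho\colon A+G\to D$; since $\mathrm{Ker}\,\xi$ lies in $T(A)$ one has $\mathrm{Ker}(\rho.k)=\mathrm{Ker}\,\xi$, and as $\xi$ is a regular epimorphism this forces a factorization $\rho.k=\mu.\xi$. Then $\rho.\iota=\rho.k.j=\mu.\xi.j=\mu.\psi$ while $\rho.i_A.\psi=\rho.k.\eta_A.\psi=\mu.\xi.\eta_A.\psi=\mu.\psi$, so $\rho$ coequalizes $\iota$ and $i_A.\psi$ and thus factors through their coequalizer $q_{\psi}$; consequently $\mathrm{Ker}(q_{\psi})\subset \mathrm{Ker}(\rho)=\mathrm{Ker}\,\xi$, as desired.

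The step I expect to be delicate is the factorization $\rho.k=\mu.\xi$: one needs that a morphism whose kernel contains $\mathrm{Ker}\,\xi$ factors through the regular epimorphism $\xi$. This holds because $\mathbb{C}$ is homological: nested normal subobjects give rise to nested effective equivalence relations, so the kernel pair of $\xi$ is contained in that of $\rho.k$, whence $\rho.k$ coequalizes the kernel pair of which $\xi$ is the quotient and therefore factors through $\xi$. Everything else is a routine chase with the displayed exact sequence, the defining coequalizer property of $q_{\psi}$, and the identities $k.j=\iota$, $k.\eta_A=i_A$, $\xi.\eta_A=1_A$.
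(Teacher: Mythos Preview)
Your argument is correct and, at its core, matches the paper's: both directions hinge on the identity $l_{\psi}\xi=q_{\psi}k$ from Proposition~\ref{extunit}, and in $(2)\Rightarrow(1)$ both you and the paper take a map out of $A+G$ with kernel $\mathrm{Ker}\,\xi$ (your $\rho$, the paper's cokernel $q'\colon A+G\to (A+G)/\mathrm{Ker}\,\xi$), factor its composite with $k$ through $\xi$ (your $\mu$, the paper's $f$), and use $k j=\iota$, $k\eta_A=i_A$, $\xi\eta_A=1$ to show it coequalizes $\iota$ and $i_A\psi$, hence factors through $q_{\psi}$.

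The organizational difference is that you frontload the short exact sequence $0\to\mathrm{Ker}\,\xi\to\mathrm{Ker}(q_{\psi})\to\mathrm{Ker}(l_{\psi})\to 0$ (obtained by pulling back along the split epi $\xi$) and read off both implications from it, whereas the paper argues each direction separately: for $(1)\Rightarrow(2)$ it shows directly that $\mathrm{Ker}\,q_{\psi}=\mathrm{Ker}\,\xi$, and for $(2)\Rightarrow(1)$ it invokes Noether's first isomorphism theorem and a $3\times 3$ diagram to produce a monomorphism $f$ through which $l_{\psi}$ factors. Your packaging is a bit more symmetric; the paper's is more explicit about the quotient $(A+G)/\mathrm{Ker}\,\xi$. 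One small remark: your justification of the factorization $\rho k=\mu\xi$ via kernel pairs is fine, but in a homological category the quicker route is that every regular epimorphism is a normal epimorphism, so $\xi$ is the cokernel of its kernel and any map killing $\mathrm{Ker}\,\xi$ factors through it.
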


\begin{proof}
\noindent 1) implies 2):
By the proof of Proposition \ref{compar}, if $\psi$ is an action, then it has an extension $\xi:TA\rightarrow A$ which satisfies $\xi.j=\psi$ and $q_{\psi}.k=l_{\psi}.\psi$ (with our usual notations). We claim that Ker $q_{\psi}$ = Ker $\xi$. More precisely, consider ker $q_{\psi}:\mbox{Ker }q_{\psi}\rightarrow A+G$, we claim that ker $q_{\psi}$ factors through $k:TA\rightarrow A+G$ and that the factorization is the kernel of $\xi$. Indeed, in the following diagram, one has $p_G.\mbox{ker }q_{\psi}=0$, so that there exists a unique $m:\mbox{Ker }q_{\psi}\rightarrow TA$ such that $km=\mbox{ker }q_{\psi}$. Then $\xi.m=0$ since $l_{\psi}.\xi.m=0$ and $l_{\psi}$ is a monomorphism. And if $f:X\rightarrow TA$ is such that $\xi.f=0$, then $q_{\psi}.k.f=l_{\psi}.\xi.f=0$, so there exists a unique $x:X\rightarrow \mbox{Ker }q_{\psi}$ such that $\mbox{ker }(q_{\psi}).x=kf$. But this means $kmx= kf$, so $mx=f$ since $k$ is a monomorphism.

$$\xymatrix{
X\ar@{.>}@/^/[rrd]^x\ar@/_/[rdd]_f&&0\ar[d]\\
&\mbox{Ker }q_{\psi}\ar@{=}[r]\ar@{.>}[d]_m&\mbox{Ker }q_{\psi}\ar[d]^{\mbox{\tiny ker }q_{\psi}}\\
0\ar[r]&TA\ar[d]_{\xi}\ar[r]^k&A+G\ar[d]^{q_{\psi}}\ar[r]_{p_G}&G\ar@{=}[d]\ar[r]\ar@/_/[l]_{s_G}&0\\
0\ar[r]&A\ar[r]^{l_{\psi}}&Q_{\psi}\ar[d]\ar[r]_{p_{\psi}}&G\ar[r]\ar@/_/[l]_{s_{\psi}}&0\\
&&0
}$$
Conversely, suppose that $\psi:(A|G)\rightarrow A$ satisfies the conditions of Proposition \ref{extunit}. First note that since $\xi$ is split by $\eta$  one has a split short exact sequence $$\xymatrix{0\ar[r]&\mbox{Ker }\xi\ar[r]^{\mbox{\tiny ker }\xi}&T_GA\ar[r]_{\xi}&A\ar@/_/[l]_{\eta}\ar[r]&0,}$$and in particular $T_GA/\mbox{Ker }\xi=A$. If moreover $\mbox{Ker }\xi$ is  proper in $A+G$, then by Noether's first isomorphism theorem  $T_GA/\mbox{Ker }\xi$ (i.e. $A$) is proper in $A+G/{\mbox{Ker }\xi}$ and $$\frac{\frac{A+G}{\mbox{\tiny Ker }\xi}}{\frac{T_GA}{\mbox{\tiny Ker }\xi}}\cong \frac{A+G}{T_GA}\cong G$$This means that one has the following commutative diagram where all horizontal and vertical lines are short exact sequences:

$$\xymatrix{
&0\ar[d]&0\ar[d]&0\ar[d]\\
0\ar[r]&\mbox{Ker }\xi\ar@{=}[d]\ar[r]^{\mbox{\tiny ker }\xi}&T_GA\ar[d]^k\ar[r]_{\xi}&A\ar[d]^f\ar@/_1pc/[l]_{\eta}\ar[r]&0\\
0\ar[r]&\mbox{Ker }\xi\ar[d]\ar[r]^{k.{\mbox{\tiny ker }\xi}}&A+G\ar[d]^{r_G}\ar[r]^{q'}&\frac{A+G}{\mbox{\tiny Ker } \xi}\ar[d]^g\ar[r]&0\\
0\ar[r]&0\ar[r]\ar[d]&G\ar@{=}[r]\ar[d]&G\ar[d]\ar[r]&0\\
&0&0&0
}$$
But moreover $f=q'k\eta$. Indeed, it suffices to show that $f\xi=q'k\eta\xi$. This is immediate, because  $q'k\eta\xi=f\xi\eta\xi=f\xi$.

But then $q'$ coequalizes $\iota$ and $i_A\psi$. Indeed, $q'.\iota=q'kj=f\xi j=q'k\eta\xi j=q'i_A\psi$. And since $q_{\psi}$ is the coequalizer of $\iota$ and $i_A\psi$, it follows that there exists a unique $h:Q_{\psi}\rightarrow (A+G)/\mbox{Ker }\xi$ such that $hq_{\psi}=q'$. But then $hl_{\psi}=hq_{\psi}i_A=q'i_A=q'k\eta=f$ which is a monomorphism, hence  $l_{\psi}$ is a monomorphism.
\end{proof}

\begin{lemma}\label{conjmult}Consider $\mu_A:T(TA)\rightarrow TA$ the multiplication of the monad and $j_{TA,G}$ the inclusion of $(TA|G)$ in $T(TA)$. Then the composite $\mu_A.j_{TA,G}$ is the restriction to $G$ of the conjugation action of $A+G$ on $TA$ (which exists by Proposition \ref{conjnormsub} since $TA$ is a proper subobject of $A+G$).\end{lemma}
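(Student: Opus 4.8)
The plan is to postcompose both maps with the monomorphism $k:=k_{A,G}\colon TA\mono A+G$ and to check that the two resulting arrows $(TA|G)\to A+G$ coincide; in fact I expect both to equal $<k,i_G>\,\iota_{TA,G}$. The only ingredient not spelled out in the text that I would pin down first is an explicit description of $\mu_A$. Recall (from the proof of Proposition \ref{compar}) that $\mathbb T=\mathbb T_G$ is the monad of the adjunction between $\mbox{Pt}_G(\mathbb C)$ and $\mathbb C$ whose right adjoint is the kernel functor and whose left adjoint sends $A$ to $(A+G,r_G,i_G)$. The counit of this adjunction at that object is the canonical morphism $<k,i_G>\colon TA+G\to A+G$ in $\mbox{Pt}_G(\mathbb C)$, i.e.\ the unique $\mbox{Pt}_G$-map $(TA+G,r_G,i_G)\to(A+G,r_G,i_G)$ (indeed $r_G<k,i_G>$ equals the canonical retraction $TA+G\to G$, and $<k,i_G>i_G=i_G$). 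Applying the kernel functor to this counit, $\mu_A\colon T(TA)\to TA$ is the unique map satisfying $k\mu_A=<k,i_G>\,k_{TA,G}$, where $k_{TA,G}\colon T(TA)\mono TA+G$ is the canonical inclusion.

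The left-hand side is then immediate. By Lemma \ref{inclcrosseff} applied to the pair $(TA,G)$ one has $k_{TA,G}\,j_{TA,G}=\iota_{TA,G}$, so
\[ k\,\mu_A\,j_{TA,G}=<k,i_G>\,k_{TA,G}\,j_{TA,G}=<k,i_G>\,\iota_{TA,G}. \]
For the right-hand side, write $c:=c^{TA,A+G}\colon(TA|A+G)\to TA$ for the conjugation action of $A+G$ on $TA$ (available by Proposition \ref{conjnormsub}, since $TA$ is a kernel in $A+G$). Restricting $c$ to the subobject $G$ of $A+G$ means precomposing with $(1_{TA}|i_G)$ (this is the trivial case $B=TA$ of Proposition \ref{stabact}), and by the defining equation of the conjugation action, $k\,c=c_2^{A+G}(k|1_{A+G})$, hence
\[ k\,c\,(1_{TA}|i_G)=c_2^{A+G}(k|1_{A+G})(1_{TA}|i_G)=c_2^{A+G}(k|i_G). \]
Now $c_2^{A+G}=\nabla_{A+G}\,\iota_{A+G,A+G}$ by Example 1 after Proposition \ref{compar} (equivalently by Definition \ref{comdef}); naturality of $\iota$ gives $\iota_{A+G,A+G}(k|i_G)=(k+i_G)\,\iota_{TA,G}$, and $\nabla_{A+G}(k+i_G)=<k,i_G>$, so the right-hand side equals $<k,i_G>\,\iota_{TA,G}$ as well.

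Comparing the two displays yields $k\,\mu_A\,j_{TA,G}=k\,c\,(1_{TA}|i_G)$, and since $k$ is a monomorphism we obtain $\mu_A\,j_{TA,G}=c\,(1_{TA}|i_G)$, which is precisely the claim. I expect the only (mild) obstacle to be the computation of the counit of the Bourn and Janelidze adjunction, hence the explicit formula for $\mu_A$; once that is available, the remainder is a short diagram chase through Lemma \ref{inclcrosseff}, naturality of $\iota$, and the defining equation of the conjugation action in Proposition \ref{conjnormsub}.
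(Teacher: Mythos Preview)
Your argument is correct and is exactly the paper's own proof, just written linearly: the paper packages the same four ingredients (the identity $k_{TA,G}j_{TA,G}=\iota_{TA,G}$, the formula $k\,\mu_A=\langle k,i_G\rangle\,k_{TA,G}$, naturality of $\iota$, and $c_2^{A+G}=\nabla_{A+G}\iota$) into a single commutative diagram rather than two displayed computations. Your explicit derivation of the formula for $\mu_A$ from the counit of the $\mbox{Pt}_G(\mathbb C)$--kernel adjunction is a welcome clarification, since the paper simply invokes it as ``by definition of $\mu_A$''.
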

\begin{proof}
By Proposition \ref{stabact}, this restriction is the unique map $c:(TA|G)\rightarrow TA$ making the following diagram commute:
$$\xymatrix{
(TA|G)\ar[d]_c\ar[rr]^{(k_{A,G}|i_G)}&&(A+G|A+G)\ar[d]^{c^2_{A+G}}\\
TA\ar[rr]_{k_{A,G}}&&A+G
}$$

\noindent so we have to show that $c=\mu_A.j_{TA,G}$ makes it commute. This follows from the commutativity of all components of the following diagram:

$$\xymatrix{
(TA|G)\ar[d]_{j_{TA,G}}\ar[rrr]^{(k_{A,G}|i_G)}\ar[rd]^{\iota_{TA,G}}&&&(A+G,A+G)\ar[d]_{\iota_{A+G,A+G}}\ar@/^5pc/[dd]^{c^2_{A+G}}\\
T(TA)\ar[d]_{\mu_A}\ar[r]^{k_{TA,G}}&(TA)+G\ar[rrd]_{<k_{A,G},i_G>}\ar[rr]^{k_{A,G}+i_G}&&(A+G)+(A+G)\ar[d]^{\nabla_{A+G}}\\
TA\ar[rrr]_{k_{A,G}}&&&A+G
}$$

\end{proof}
We denote this restriction  by $c^{TA,A+G}\upharpoonright_G$ or $c$ when no confusion is possible. Note that since it is true in general that the multiplication $\mu_A$ of a monad $\mathbb T$ is always a $\mathbb T$-algebra over $TA$, it is obvious that if the category is semi-abelian and if $\mathbb T$ is $\mathbb T_G$, then $\mu_A$ is a $G$-action over $TA$ in the sense of [BJ], hence $\mu_A.j_{TA,G}$ is an action in our sense: Lemma \ref{conjmult} generalizes this to the non-exact case. It will also be useful in the sequel to note that $c$ factors through $(A|G)$:

\begin{lemma}\label{mufact}Under the hypothesis  of Lemma \ref{conjmult}, the map $c$ factors through the inclusion $j_{A,G}$ of $(A|G)$ in $TA$.\end{lemma}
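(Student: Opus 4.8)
The statement to prove is Lemma \ref{mufact}: the map $c = \mu_A . j_{TA,G} : (TA|G) \to TA$ factors through the inclusion $j_{A,G} : (A|G) \hookrightarrow TA$. By Lemma \ref{inclcrosseff}, $j_{A,G}$ is the kernel of $r_A . k_{A,G} : TA \to A$, so it suffices to show that $(r_A . k_{A,G}) . c = 0$, i.e.\ that $r_A . k_{A,G} . \mu_A . j_{TA,G} = 0$. The plan is to rewrite $k_{A,G} . \mu_A$ in terms of data living over $A+G$, using the naturality/definition of $\mu$, and then to precompose with $\iota_{TA,G}$ (which is legitimate since $k_{TA,G} = k_{A,G}$-style inclusions and $j$'s are monic, and ultimately $j_{TA,G}$ factors through $\iota_{TA,G}$ by Lemma \ref{inclcrosseff}), reducing the vanishing to a cross-effect being bireduced.

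\textbf{Key steps.} First I would use Lemma \ref{conjmult}, which is already available: it tells us $c = \mu_A . j_{TA,G}$ equals the restriction to $G$ of the conjugation action $c^{A+G,A+G}$ of $A+G$ on its proper subobject $TA$, i.e.\ the unique map with $k_{A,G} . c = c_2^{A+G} . (k_{A,G}|i_G)$. So the identity I actually must establish is $r_A . c_2^{A+G} . (k_{A,G}|i_G) = 0$. Next I would unwind $c_2^{A+G} = \nabla_{A+G} . \iota_{A+G,A+G}$ (Definition \ref{comdef} for $n=2$) and exploit naturality of $c_2$ with respect to the cokernel map $A+G \to G$ (which kills $TA$), exactly as in the proof of Proposition \ref{conjnormsub}: since $TA = T_G(A)$ is the kernel of $r_G : A+G \to G$, and $r_G . k_{A,G} = 0$, the composite $(r_G|r_G) . (k_{A,G}|i_G) = (r_G k_{A,G} | r_G i_G) = (0 | r_G i_G) = 0$ because the bifunctor $(-|-)$ is bireduced (Proposition \ref{elcrprops}(2)). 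This shows $c^{A+G,A+G}$ applied along $(k_{A,G}|i_G)$ lands in $TA = \mathrm{Ker}(r_G)$, which we already knew; the extra information I need is that it lands in the \emph{smaller} kernel $(A|G) = \mathrm{Ker}(r_A . k_{A,G})$, so I must instead test against $r_A : A+G \to A$... but wait, $r_A$ is only defined on the sum $A+G$, not on $TA$; the correct statement is that $j_{A,G}$ is $\mathrm{ker}(r_A k_{A,G})$, so I compose $c$ (valued in $TA$) with $r_A . k_{A,G} : TA \to A$ and show the result is zero. Composing $k_{A,G} . c = c_2^{A+G}(k_{A,G}|i_G)$ with $r_A$, I get $r_A . c_2^{A+G} . (k_{A,G}|i_G) = r_A . \nabla_{A+G} . \iota_{A+G,A+G} . (k_{A,G}|i_G)$, and now $r_A . \nabla_{A+G}$ is the codiagonal-then-projection, which on the sum $(A+G)+(A+G)$ is $\langle r_A, r_A \rangle$; precomposing with $(k_{A,G}+i_G)$ (through which $\iota_{A+G,A+G}(k_{A,G}|i_G)$ factors, by the diagram in Lemma \ref{conjmult}) gives $\langle r_A k_{A,G}, r_A i_G\rangle = \langle r_A k_{A,G}, 0\rangle$, and restricting along the cross-effect inclusion of $(TA|G)$ inside $(TA)+G$ kills this because the second component vanishes and cross-effects are bireduced. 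More cleanly: naturality of $c_2$ applied to the map $r_A \oplus 0 : A+G \to A$ (or rather to the relevant retraction) combined with bireducedness of $(-|-)$ forces $r_A . k_{A,G} . c = 0$ on $(TA|G)$.

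\textbf{Main obstacle.} The technical care required is in the bookkeeping of which cross-effect inclusion one precomposes with and in correctly identifying the map $r_A . k_{A,G} . \mu_A . j_{TA,G}$ as a cross-effect of a bireduced bifunctor evaluated at an argument with a zero slot. The cleanest route, which I would take, is: reduce via Lemma \ref{conjmult} to $r_A . c_2^{A+G} . (k_{A,G}|i_G)$; observe this equals $c^{A,A+G} \upharpoonright_G$-type data by naturality of the conjugation action (Proposition \ref{conjnormsub}) applied to the cokernel-like retraction $r_A k_{A,G}$ extended appropriately, but since $r_A$ is not a quotient map of the right shape, instead argue directly: $r_A . c_2^{A+G} . (k_{A,G}|i_G) = r_A . \nabla_{A+G}. (k_{A,G}+i_G) . \iota^{(TA)+G}_{TA,G}$-corrected, and $\langle r_A k_{A,G}, r_A i_G \rangle . \iota = \langle *, 0\rangle.\iota$ which, as a map out of a second cross-effect with the second coordinate factoring through $0$, vanishes by Proposition \ref{elcrprops}(2) after noting the cross-effect $(TA|G) = \mathrm{cr}_2(\mathrm{Id})(TA,G)$ is multireduced. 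The main obstacle is thus purely notational: matching the inclusion $j_{TA,G}$ against $\iota_{TA,G}$ (they differ by the inclusion of the second cross-effect into the first kernel, which is exactly Lemma \ref{inclcrosseff} again) so that the bireducedness argument applies. I expect the proof to be three or four lines once the diagram of Lemma \ref{conjmult} is invoked to linearize $k_{A,G}.\mu_A.j_{TA,G}$.
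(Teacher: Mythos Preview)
Your approach is correct and essentially the same as the paper's. Both reduce to showing $r_A\, k_{A,G}\, \mu_A\, j_{TA,G}=0$ (using that $j_{A,G}=\ker(r_A k_{A,G})$ from Lemma~\ref{inclcrosseff}) and both end up computing $\langle r_A k_{A,G},\, r_A i_G\rangle\, \iota_{TA,G} = \langle r_A k_{A,G},\, 0\rangle\, \iota_{TA,G} = 0$ via bireducedness of the cross-effect; the only cosmetic difference is that you invoke Lemma~\ref{conjmult} to rewrite $k_{A,G}\, c$ as $c_2^{A+G}(k_{A,G}|i_G)$, whereas the paper uses directly the defining identity $k_{A,G}\,\mu_A = \langle k_{A,G}, i_G\rangle\, k_{TA,G}$ (together with $k_{TA,G}\, j_{TA,G}=\iota_{TA,G}$), which amounts to the same diagram chase.
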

\begin{proof}Since $j_{A,G}$ is the kernel of $r_A.k_{A,G}$ by Lemma \ref{inclcrosseff}, it suffices to show that $r_A.k_{A,G}.\mu_A.j_{TA,G}=0$. But by definition of $\mu_A$, the right hand square in the following diagram commutes:
$$\xymatrix{
(TA|G)\ar@{.>}[d]\ar[r]^{j_{TA,G}}&TTA\ar[d]^{\mu_A}\ar[r]^{k_{TA,G}}&TA+G\ar[d]^{<k_{A,G},i_G>}\\
(A|G) \ar[r]_{j_{A,G}}&TA\ar[r]_{k_{A,G}}&A+G\ar[r]_{r_G}\ar[d]^{r_A}&G\ar@/_/[l]_{i_G}\\
&&A}$$
so $r_Ak_A\mu j_{TA,G}$ = $r_A<k_A,i_G>k_{TA,G}$ = $<r_Ak_A,r_Ai_G>k_{TA,G}$ = $0.k_{TA,G}$ = 0.
\end{proof}


\noindent{\bf Example: }In the category of groups, an element of $TA$ is an element of $A+G$ of the form $(\prod_i[g_i,a_i]).a$. We denote by $[-,-]$ the commutators in $A+G$, and by $\llbracket-,-\rrbracket$ the commutators in $(A+G)+G$. Hence an generator of $(TA|G)$ has the form $\llbracket g,(\prod_i[g_i,a_i]).a\rrbracket$ (with $g$ in the second copy of $G$, and $g_i$ in the first one), and $c(\llbracket g,(\prod_i[g_i,a_i]).a\rrbracket)$ = $[g,(\prod_i[g_i.a_i]).a]$ where both commutators are considered in $A+G$. Lemma \ref{mufact} is  illustrated by the fact that this element is  in $(A|G)$. 

\begin{prop}\label{condass}
A map $\xi:TA\rightarrow A$ which satisfies the unit axiom of a $\mathbb T$-algebra also satisfies the associativity axiom  if and only if the following diagram commutes:

$$\xymatrix{
(TA|G)\ar[d]_{(\xi|1_G)}\ar[rrr]^{c^{TA,A+G}\upharpoonright_G}&&&TA\ar[d]^{\xi}\\
(A|G)\ar[rrr]_{\xi.j_{A,G}}&&&A
}$$

\noindent Hence if  a map $\psi:(A|G)\rightarrow A$ satisfies the conditions of Proposition \ref{extunit}, with extension $\xi$ to $TA$, then $(A,\xi)$  is a $\mathbb T$-algebra if and only if the following diagram commutes:
\begin{equation}\label{psiTalgdia}
\xymatrix{
(TA|G)\ar[d]_{({\xi}|1_G)}\ar[rrr]^{c^{TA,A+G}\upharpoonright_G}&&&TA\ar[d]^{\xi}\\
(A|G)\ar[rrr]_{\psi}&&&A
}
\end{equation}

\end{prop}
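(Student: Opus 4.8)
The plan is to reduce the associativity axiom $\xi\mu_A = \xi\,T(\xi)$ to the commutativity of the displayed square by exploiting that the source $T(TA)$ of both sides of the associativity equation is generated — in the strong/regular-epi sense afforded by protomodularity — by a small number of pieces, and checking agreement on each piece. More precisely, recall from Lemma \ref{inclcrosseff} that $T(TA) = (TA|G)\rtimes_{\psi_0}(TA)$, so the pair consisting of $j_{TA,G}\colon (TA|G)\to T(TA)$ and $\eta_{TA}\colon TA\to T(TA)$ is (strongly) epimorphic. Hence to prove $\xi\mu_A = \xi T(\xi)$ it suffices to prove the two equalities obtained by precomposing with $j_{TA,G}$ and with $\eta_{TA}$. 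The second of these is automatic once $\xi$ satisfies the unit axiom: naturality of $\eta$ gives $T(\xi)\eta_{TA} = \eta_A\xi$, while $\mu_A\eta_{TA}=1$, so both composites equal $\xi$. Thus the whole associativity axiom is equivalent to the single equation $\xi\mu_A j_{TA,G} = \xi T(\xi) j_{TA,G}$.

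Next I would identify each side of this equation. For the left-hand side, Lemma \ref{conjmult} says precisely that $\mu_A j_{TA,G} = c^{TA,A+G}\!\upharpoonright_G\colon (TA|G)\to TA$, and by Lemma \ref{mufact} this map even factors through $j_{A,G}\colon (A|G)\to TA$; write $c=j_{A,G}\bar c$ for the corestriction. So the left-hand side is $\xi\, c^{TA,A+G}\!\upharpoonright_G$. For the right-hand side, naturality of the inclusion $j$ of the cross-effect (Proposition \ref{elcrprops}, applied to the identity functor, or rather naturality of $\iota$ together with the definition of $T$ as a kernel) gives a commuting square $T(\xi) j_{TA,G} = j_{A,G}\,(\xi|1_G)$, where $(\xi|1_G)\colon (TA|G)\to (A|G)$ is the value of the second cross-effect on the pair $(\xi,1_G)$. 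Hence the right-hand side of the reduced equation is $\xi j_{A,G}(\xi|1_G) = \psi\,(\xi|1_G)$, using $\xi j_{A,G}=\psi$ from Proposition \ref{extunit}. Putting the two computations together, the associativity axiom is equivalent to
\[ \xi\, c^{TA,A+G}\!\upharpoonright_G = \psi\,(\xi|1_G)\colon (TA|G)\longrightarrow A, \]
which is exactly the commutativity of the square in the first displayed diagram (with $\xi j_{A,G}$ on the bottom rewritten as $\psi$ in the second, $\psi$-centred, version). This yields both assertions of the proposition at once, since the hypothesis on $\psi$ in the second statement is precisely that furnished by Proposition \ref{extunit}, giving the extension $\xi$ with $\xi j_{A,G}=\psi$ and $\xi\eta_A=1$.

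The step I expect to be the main obstacle is the bookkeeping in the identification $T(\xi) j_{TA,G} = j_{A,G}(\xi|1_G)$ and the compatible identification of the left-hand side: one must be careful that the ``$(\xi|1_G)$'' appearing here is the genuine functoriality of $cr_2(Id_{\mathbb C})$ on the morphism $(\xi,1_G)\colon (TA,G)\to(A,G)$ and that it is compatible, via the kernel definitions of $T$ and of $(-|-)$, with the map $T(\xi)$; this is where the definitions of $\iota$, $k$, $j$ and the snake-type arguments of section 1 have to be invoked in the right order. Everything else — the epimorphic-pair reduction, the unit-axiom verification on the $\eta_{TA}$ component, and the substitutions $\mu_A j_{TA,G}=c^{TA,A+G}\!\upharpoonright_G$ and $\xi j_{A,G}=\psi$ — is a direct appeal to Lemma \ref{inclcrosseff}, Corollary \ref{restract}, Lemma \ref{conjmult}, Lemma \ref{mufact} and Proposition \ref{extunit}, with no genuinely new computation. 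Since the argument is an equivalence throughout (each reduction step is "if and only if"), the converse direction — commutativity of the square implies associativity — comes for free.
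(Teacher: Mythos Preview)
Your proposal is correct and follows essentially the same approach as the paper: reduce to the jointly epimorphic pair $(j_{TA,G},\eta_{TA})$, dispose of the $\eta_{TA}$-component via the unit axiom and naturality of $\eta$, and identify the $j_{TA,G}$-component with the displayed square using Lemma~\ref{conjmult} for $\mu_A j_{TA,G}$ and naturality of $j_{-,G}$ for $T(\xi)j_{TA,G}=j_{A,G}(\xi|1_G)$. The only superfluous ingredient is your invocation of Lemma~\ref{mufact}, which is not needed here; and your ``main obstacle'' is in fact routine, being nothing more than the naturality of $j_{-,G}\colon (-|G)\Rightarrow T_G(-)$.
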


\begin{proof}
Recall that the associativity condition for $\xi$ is commutativity of the following diagram:
$$\xymatrix{
T(TA)\ar[d]_{T\xi}\ar[r]^{\mu_A}&TA\ar[d]^{\xi}\\
TA\ar[r]_{\xi}&A
}$$

\noindent Since the pair $\{j_{TA,G}:(TA|G)\rightarrow T(TA),\eta_{TA}:TA\rightarrow T(TA)\}$ is (strongly) epimorphic this condition is equivalent to the two equations $\xi.\mu_A.\eta_{TA}=\xi.T\xi.\eta_{TA}$ and $\xi.\mu_A.j_{TA,G}=\xi.T\xi.j_{TA,G}$. The first one is automatically satisfied, because $\xi.T\xi.\eta_{TA}=\xi.\eta_A.\xi=\xi$ since $\eta$ is a natural transformation and $\xi$ satisfies the unit axiom; and $\xi.\mu_A.\eta_{TA}=\xi$ since $\mu_A.\eta_{TA}=1_{TA}$. Considering that in the following diagram the left-hand square commutes, the second condition is equivalent to the commutativity of the outer rectangle; but in view of Lemma \ref{conjmult} this amounts to the required commutativity.
$$\xymatrix{
(TA|G)\ar[r]^{j_{TA,G}}\ar[d]_{(\xi|1_G)}&T(TA)\ar[d]_{T\xi}\ar[r]^{\mu_{A,G}}&TA\ar[d]^{\xi}\\
(A|G)\ar[r]_{j_{A,G}}&TA\ar[r]_{\xi}&A
}$$
\end{proof}

The following result shows that in a semi-abelian category, the conditions of Proposition \ref{condass} are also sufficient for some $\psi:(A|G)\rightarrow A$ to be an action. This provides an alternative proof of the fact that in any semi-abelian category,  all $\mathbb T_G$-algebras are actions. We point out that this proof is based on Proposition \ref{kerxi} and the characterization of proper subobjects by stability under the conjugation action in Corollary \ref{critprop}, instead of Beck's criterion as in \cite{BJ}.

\begin{prop}Let $\mathbb C$ be semi abelian. Let $\psi:(A|G)\rightarrow A$ be a map satisfying the equivalent conditions of Proposition \ref{extunit}. Then $\psi$ is an action if and only if the  diagram (\ref{psiTalgdia}) commutes.

\end{prop}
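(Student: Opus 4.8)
The plan is to reduce everything to Proposition \ref{condass} and the equivalences already available. Throughout, $\xi\colon TA\to A$ denotes the unique extension of $\psi$ along $j$ satisfying the unit axiom, furnished by Proposition \ref{extunit} and characterised by Corollary \ref{restract}; by Proposition \ref{condass}, commutativity of \REF{psiTalgdia} is exactly the statement that $(A,\xi)$ is a $\mathbb T$-algebra, so the assertion to be proved reads: \emph{$\psi$ is an action if and only if $(A,\xi)$ is a $\mathbb T$-algebra}. The ``only if'' direction is immediate: if $\psi$ is an action, then Proposition \ref{compar} produces an Eilenberg--Moore algebra on $A$ extending $\psi$, and since an Eilenberg--Moore algebra satisfies the unit axiom, Corollary \ref{restract} identifies it with $(A,\xi)$.

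For the ``if'' direction, assume $(A,\xi)$ is a $\mathbb T$-algebra. By Proposition \ref{kerxi} (its implication $2\Rightarrow1$; recall that ``$\psi$ satisfies the conditions of Proposition \ref{extunit}'' is part of our hypotheses) it suffices to prove that $\mathrm{Ker}(\xi)$ is a proper subobject of $A+G$. As $\mathbb C$ is semi-abelian, Corollary \ref{critprop} reduces this to showing that $\mathrm{Ker}(\xi)$ is stable under the conjugation action of $A+G$ on itself; and since $\mathrm{Ker}(\xi)$ is a subobject of $TA$, which is proper in $A+G$ with $k_{A,G}\,c^{TA,A+G}=c_2^{A+G}(k_{A,G}|1_{A+G})$ by Proposition \ref{conjnormsub}, this stability is equivalent --- $k_{A,G}$ being monic --- to the assertion that $c^{TA,A+G}(\mathrm{ker}(\xi)\,|\,1_{A+G})\colon(\mathrm{Ker}(\xi)\,|\,A+G)\to TA$ factors through $\mathrm{ker}(\xi)$, i.e.\ to the vanishing $\xi\cdot c^{TA,A+G}(\mathrm{ker}(\xi)\,|\,1_{A+G})=0$.

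To prove this vanishing I would use that $A+G=TA\rtimes_{\psi'}G$ with $\psi'=c^{TA,A+G}\upharpoonright_G=\mu_A\,j_{TA,G}$ (Example 2 in Section \ref{genprop}, Lemma \ref{conjmult}), and apply Proposition \ref{crsdp} with $F$ the identity functor, the split map $r_G\colon A+G\to G$ of kernel $k_{A,G}\colon TA\to A+G$, and fixed object $\mathrm{Ker}(\xi)$; by symmetry of $cr_2$ this presents $(\mathrm{Ker}(\xi)\,|\,A+G)$ as a regular-epimorphic image of $(TA\,|\,G\,|\,\mathrm{Ker}(\xi))+(TA\,|\,\mathrm{Ker}(\xi))+(G\,|\,\mathrm{Ker}(\xi))$, so that it is enough to see $\xi\cdot c^{TA,A+G}(\mathrm{ker}(\xi)\,|\,1)$ dies after precomposition with each of the three structure maps. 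On $(TA\,|\,\mathrm{Ker}(\xi))$ the composite unwinds --- using that the conjugation action of $A+G$ on $TA$ restricts along $k_{A,G}$ to $c_2^{TA}$, an easy consequence of $n\,c^{N,E}=c_2^E(n|1)$ --- to $c_2^{TA}$ with $\mathrm{Ker}(\xi)$ in one slot; this factors through $\mathrm{ker}(\xi)$ since $\mathrm{Ker}(\xi)$, being a kernel, is proper in $TA$ (Corollary \ref{critprop}), and so vanishes after postcomposition with $\xi$. On $(G\,|\,\mathrm{Ker}(\xi))$ the composite is $(c^{TA,A+G}\upharpoonright_G)(\mathrm{ker}(\xi)\,|\,1_G)$ by Lemma \ref{conjmult}, and precomposing the associativity square of Proposition \ref{condass} with $(\mathrm{ker}(\xi)\,|\,1_G)$ yields $\xi\cdot(c^{TA,A+G}\upharpoonright_G)(\mathrm{ker}(\xi)\,|\,1_G)=\psi\cdot(\xi|1_G)(\mathrm{ker}(\xi)\,|\,1_G)=\psi\cdot(0\,|\,1_G)=0$, the last equality because $(0\,|\,1_G)$ factors through $(0\,|\,G)=0$ ($cr_2$ being reduced, Proposition \ref{elcrprops}(2)) since $\xi\cdot\mathrm{ker}(\xi)=0$.

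The remaining summand $(TA\,|\,G\,|\,\mathrm{Ker}(\xi))$ is the step I expect to be the main obstacle. Here the map $\iota'$ of Proposition \ref{crsdp} is $(<k_{A,G},i_G>\,|\,1_{\mathrm{Ker}(\xi)})\circ\iota_{TA,G}$, and since $r_G<k_{A,G},i_G>=<0,1_G>$ is the coproduct retraction $TA+G\to G$, which kills the cross-effect inclusion, the composite $(TA\,|\,G\,|\,\mathrm{Ker}(\xi))\mr{\iota'}(A+G\,|\,\mathrm{Ker}(\xi))\mr{(r_G|1)}(G\,|\,\mathrm{Ker}(\xi))$ vanishes. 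In the category of groups one checks moreover that $\iota'$ factors through $(k_{A,G}|1_{\mathrm{Ker}(\xi)})$ --- a commutator of an element of $TA$ with an element of $G$ again lies in $TA$ --- which reduces this summand to the $(TA\,|\,\mathrm{Ker}(\xi))$-case already handled, so that $\xi$ again annihilates it (informally: $\xi$ kills any iterated commutator involving an element of $\mathrm{Ker}(\xi)$). The delicate point is to carry out this reduction diagrammatically \emph{without} appealing to left-exactness of $(-\,|\,Z)$ in the first variable --- which fails --- arguing instead directly from the construction of $\iota'$ in Proposition \ref{crsdp} and from the definition of the iterated cross-effect. Granting it, $\xi\cdot c^{TA,A+G}(\mathrm{ker}(\xi)\,|\,1_{A+G})=0$, so $\mathrm{Ker}(\xi)$ is proper in $A+G$, and Proposition \ref{kerxi} gives that $\psi$ is an action.
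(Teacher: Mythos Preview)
Your overall strategy coincides with the paper's: for the ``only if'' direction you invoke Proposition \ref{compar} and Corollary \ref{restract}, and for the ``if'' direction you reduce via Proposition \ref{kerxi} and Corollary \ref{critprop} to showing that $\mathrm{Ker}(\xi)$ is stable under the conjugation action of $A+G$, and then observe that this amounts to the vanishing
\[
\xi\cdot c^{TA,A+G}\cdot(\mathrm{ker}\,\xi \mid 1_{A+G}) \;=\; 0\,.
\]
This is exactly the reduction the paper performs.

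At this point the paper simply asserts
\[
\xi\cdot c^{TA,A+G}\cdot(\mathrm{ker}\,\xi \mid 1) \;=\; \psi\cdot(\xi\mid 1_G)\cdot(\mathrm{ker}\,\xi \mid 1)
\]
``by hypothesis'' and concludes. You, in contrast, notice that the hypothesis (diagram \REF{psiTalgdia}) only controls the restriction $c^{TA,A+G}\!\upharpoonright_G$ on $(TA\mid G)$, not the full map on $(TA\mid A+G)$, and you try to bridge the gap by decomposing $(\mathrm{Ker}\,\xi\mid A+G)$ via Proposition \ref{crsdp}. Your treatment of the summands $(TA\mid \mathrm{Ker}\,\xi)$ and $(G\mid\mathrm{Ker}\,\xi)$ is clean and correct: the first uses that $\mathrm{Ker}(\xi)$ is proper in $TA$, the second is precisely the paper's one-line computation.

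The genuine gap is the one you yourself flag: the triple cross-effect summand $(TA\mid G\mid\mathrm{Ker}\,\xi)$. You observe that $(r_G\mid 1)\iota'=0$, and you claim that $\iota'$ factors through $(k_{A,G}\mid 1)$, which would reduce this summand to the already-handled $(TA\mid\mathrm{Ker}\,\xi)$ case; but you explicitly do not prove this factorisation and write ``granting it''. That is not a proof. The obstruction you name is real: $(-\mid Z)$ is not left exact, so the vanishing of $(r_G\mid 1)\iota'$ does not formally yield a lift through $(k_{A,G}\mid 1)$. In effect, what is missing is the categorical statement that if two subobjects each normalise $X$ then so does their join---a fact which is obvious in groups but which neither you nor the paper establishes here. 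So your proposal is incomplete at exactly the point where the paper's own computation is (at best) elliptic; the paper does not supply the argument you are looking for, it just writes the equality down.
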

\begin{proof}The condition is necessary in view of Proposition \ref{condass} (even if the category is not semi-abelian), since if $\psi$ is an action then its extension $\xi$ is an algebra. Conversely, suppose $\mathbb C$ is semi-abelian and suppose that $\psi$ satisfies the conditions of Propositions \ref{extunit} and \ref{condass}. To show that $\psi$ is an action, it suffices by Proposition \ref{kerxi} to show that $\mbox{Ker }\xi$ is proper in $A+G$. Since $\mathbb C$ is semi-abelian, it suffices by Corollary \ref{critprop} to show that $\mbox{Ker }\xi$ is stable under the conjugation action of $A+G$, i.e. that there exists a map $c'':(\mbox{Ker }\xi|A+G)\rightarrow \mbox{Ker }\xi)$ making the following diagram commute:
$$\xymatrix{(\mbox{Ker }\xi|A+G)\ar@{.>}[d]_{c''}\ar[rr]^{({k.\mbox{\tiny ker }\xi}|1)}&&(A+G|A+G)\ar[d]^{c_2^{A+G}}\\
\mbox{Ker }\xi\ar[rr]_{\mbox{\tiny k.ker }\xi}&&A+G}$$
But since $TA$ is proper in $A+G$ one has the conjugation action $c^{TA,A+G}$ of $A+G$ over $TA$ which makes the following diagram commute:
$$\xymatrix{(TA|A+G)\ar[d]_{c^{TA,A+G}}\ar[rr]^{(k|1)}&&(A+G|A+G)\ar[d]^{c_2^{A+G}}\\TA\ar[rr]_k&&A+G}$$
so that one gets the result  if one can find  a map $c''$ such that $$\xymatrix{(\mbox{Ker }\xi|A+G)\ar@{.>}[d]_{c''}\ar[rr]^{({\mbox{\tiny ker }\xi}|1)}&&(TA|A+G)\ar[d]^{c^{TA,A+G}}\\\mbox{Ker }\xi\ar[rr]_{\mbox{\tiny ker }\xi}&&TA}$$commutes, i.e. if $\xi.c^{TA,A+G}.(\mbox{ker }\xi|1)=0$. But
\begin{eqnarray*}
\xi .c^{TA,A+G}(\mbox{ker }\xi|1) &=& \psi.(\xi|1_G).(\mbox{ker }\xi|1)\quad \mbox{by hypothesis}\\
&=& \psi.(\xi.\mbox{ker }\xi|1)\\
&=&\psi.(0|1)\\
&=&0\quad \mbox{by Proposition \ref{elcrprops}.(2).}
\end{eqnarray*}
\end{proof}

We finish with a proposition which characterizes the maps $\psi:(A|G)\rightarrow A$ which can be extended to $\mathbb T$-algebras, hence which are actions if the category is semi-abelian. Of course, the diagrams involved in this proposition are more complicated than the axioms of an $\mathbb T$-algebra. The interest is that these axioms are exclusively expressed in terms of cross effects. Morevover, it appears that the second axiom is a simplification of the associativity axiom: in view of Lemma \ref{conjmult} which shows that $\mu$ is a conjugation action, it can be seen as the associativity axiom restricted to the endofunctor $(-|G)$ instead of $T_G$. And the third axiom expresses that one then must add a condition involving a  higher cross effect. It turns out, however, that in the category of groups the third condition may be skipped.

\begin{prop}\label{3diag}Let $\psi$ be a map $(A|G)\rightarrow A$. Then $\psi$ can be extended to a $\mathbb T$-algebra if and only if the  following three diagrams commute: 
\begin{equation}\label{Talgcond1}
\xymatrix{
((A|G)|A)\ar[rr]^{c^{(A|G),A+G}\upharpoonright_A}\ar[d]_{(\psi|1_A)}&&(A|G)\ar[d]^{\psi}\\
(A|A)\ar[rr]_{c_2^A}&&A
}
\end{equation}

\begin{equation}\label{Talgcond2}
\xymatrix{((A|G)|G)\ar[rr]^{c^{(A|G),A+G}\upharpoonright_G}\ar[d]_{(\psi|1_G)}&&(A|G)\ar[d]^{\psi}\\
(A|G)\ar[rr]_{\psi}&&A}
\end{equation}

\begin{equation}\label{Talgcond3}
\xymatrix{((A|G)|A|G)\ar[rr]^{\iota}\ar[dd]_{\iota}&&((A|G)+A|G)\ar[d]^{(<j_{A,G},\eta_A>|1_G)}\\
&&(TA|G)\ar[d]^{c'}\\
((A|G)+A|G)\ar[d]_{(<\psi,1_A>|1_G)}&&(A|G)\ar[d]^{\psi}\\
(A|G)\ar[rr]_{\psi}&&A}
\end{equation}
where $c'$ is the factorization of $c$  in Lemma \ref{mufact}.
\end{prop}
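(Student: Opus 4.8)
The strategy is to reduce the three-diagram characterization to the two conditions already obtained in Propositions \ref{extunit} and \ref{condass}, by showing that diagram \REF{Talgcond1} is exactly the condition of Proposition \ref{extunit} (hence guarantees the existence of an extension $\xi$ satisfying the unit axiom), and that diagrams \REF{Talgcond2} and \REF{Talgcond3} together are equivalent to diagram \REF{psiTalgdia} (hence to the associativity axiom, by Proposition \ref{condass}). First I would identify the map $\psi_0$ of Proposition \ref{extunit} with $c^{(A|G),A+G}\upharpoonright_A$: indeed, by Corollary \ref{m=conj} and the remark following the statement of Proposition \ref{extunit}, $\psi_0$ is the restriction to $A$ of the conjugation action of $TA$ on $(A|G)$, which in turn is the restriction to $TA$ of the conjugation action of $A+G$ on $(A|G)$; restricting the latter further along $i_A\colon A \to A+G$ gives precisely $c^{(A|G),A+G}\upharpoonright_A$ (using Proposition \ref{stabact} and the compatibility of conjugation actions with restriction of the acting object, which follows from naturality of $c_2^{(-)}$ as in Proposition \ref{conjnormsub}). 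Once $\psi_0 = c^{(A|G),A+G}\upharpoonright_A$ is established, \REF{Talgcond1} is literally \REF{extofpsidia}, so $\psi$ satisfies \REF{Talgcond1} iff it admits an extension $\xi\colon TA \to A$ with $\xi\eta_A = 1_A$.

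Assuming \REF{Talgcond1} holds, with extension $\xi$, the remaining task is to show that \REF{psiTalgdia} holds iff both \REF{Talgcond2} and \REF{Talgcond3} hold. The key tool is Proposition \ref{crsdp}, applied to the split epimorphism $r_A k_{A,G}\colon TA \to A$ with kernel $j_{A,G}\colon (A|G)\to TA$ and splitting $\eta_A$, and with the functor $F = Id_{\mathbb C}(-|G)$ (which preserves regular epimorphisms by Proposition \ref{cr-reg-epi}): this yields that the map
\[
\langle \iota', (j_{A,G}|1_G), (\eta_A|1_G)\rangle \colon ((A|G)|A|G) + ((A|G)|G) + (A|G) \longrightarrow (TA|G)
\]
is a regular epimorphism, where $\iota'$ is the composite appearing in \REF{Talgcond3}. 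Consequently, to verify commutativity of \REF{psiTalgdia} it suffices to precompose with this regular epimorphism and check commutativity on each of the three summands separately. On the summand $(A|G)$ (via $(\eta_A|1_G)$), both legs reduce — using $\xi\eta_A = 1_A$ — to $\psi$ itself, so this piece is automatic. On the summand $((A|G)|G)$ (via $(j_{A,G}|1_G)$), the condition becomes precisely diagram \REF{Talgcond2}, after identifying $c^{(A|G),A+G}\upharpoonright_G$ restricted along $j_{A,G}$ with the relevant restriction of $c = c^{TA,A+G}\upharpoonright_G$ via Lemma \ref{conjmult} and the naturality of conjugation actions. On the summand $((A|G)|A|G)$ (via $\iota'$), one unwinds the definition of $\iota'$ as $F(\langle j_{A,G},\eta_A\rangle|1)\circ\iota$ and uses the factorization $c = j_{A,G}c'$ of Lemma \ref{mufact}; tracing both legs of \REF{psiTalgdia} through this summand produces exactly diagram \REF{Talgcond3}.

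The main obstacle I anticipate is the bookkeeping in the last step: verifying that the leg $\xi \circ c^{TA,A+G}\upharpoonright_G$ of \REF{psiTalgdia}, precomposed with $\iota'$, genuinely coincides with the composite $\psi\circ c'\circ(\langle j_{A,G},\eta_A\rangle|1_G)\circ\iota$ appearing on the right side of \REF{Talgcond3}, and that the other leg $\psi\circ(\xi|1_G)$ precomposed with $\iota'$ matches $\psi\circ(\langle\psi,1_A\rangle|1_G)\circ\iota$. This requires carefully pushing the identities $\xi j_{A,G} = \psi$ and $\xi\eta_A = 1_A$ through the cross-effect functor $(-|G)$ and through the map $\iota$, using naturality of $\iota$ (Proposition \ref{elcrprops}.(1)) and the explicit description of $\iota'$. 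All of this is routine diagram chasing once the regular-epimorphism decomposition from Proposition \ref{crsdp} is in place, but it is the part where sign/ordering errors are easiest to make; everything else follows formally from Propositions \ref{extunit}, \ref{condass}, \ref{crsdp} and Lemmas \ref{conjmult}, \ref{mufact}.
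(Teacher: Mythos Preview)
Your proof plan is correct and follows essentially the same route as the paper: identify diagram \REF{Talgcond1} with diagram \REF{extofpsidia} via $\psi_0 = c^{(A|G),A+G}\upharpoonright_A$, then use the regular epimorphism of Proposition \ref{crsdp} to test \REF{psiTalgdia} on the three summands, the $(\eta_A|1_G)$-piece being automatic and the other two yielding \REF{Talgcond2} and \REF{Talgcond3}. One small correction of notation: in invoking Proposition \ref{crsdp} you should take $F = Id_{\mathbb C}$ with the auxiliary object (called ``$A$'' there) equal to $G$, rather than $F = (-|G)$; the epimorphism you actually write down is nonetheless the correct one.
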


\begin{proof} Since the first diagram is (\ref{extofpsidia}) we know from  Proposition \ref{extunit} that it commutes iff $\psi$ has an extension $\xi$ which satisfies the unit axiom for an algebra. Supposing that this holds,
it remains to show that then the associativity axiom is equivalent to the commutativity of the two latter diagrams. We apply Proposition \ref{crsdp}  to $F = Id_{\mathbb C}$, $A=G$, $X=TA$, $K=(A|G)$, $Y=A$, $k=j_{A,G}$, $f=r_A.k_{A,G}$, $s=\eta_A$. Then the family $(\iota',(j_{A,G}|1),(\eta_A|1))$ of morphisms with codomain $(TA|G)$ is jointly epimorphic, so diagram (\ref{psiTalgdia}) commutes iff its precompositions with these three maps commute.
To simplify the notation, we will denote by $c$ all conjugation morphisms: naturality of $c$ makes this simplification unambiguous.

First we precompose diagram (\ref{psiTalgdia})  with $(\eta_A|1_G)$:
$$\xymatrix{
(A|G)\ar@/_2pc/@{=}[dd]_{(\xi.\eta_A|1_G)=1_{(A|G)}}\ar@{.>}@/^1pc/[dr]^{j_{A,G}}\ar[d]^{(\eta_A|1_G)}\\ 
(TA|G)\ar[r]^c\ar[d]^{(\xi|1_G)}&TA\ar[d]^{\xi}\\
(A|G)\ar[r]_{\psi}&A}$$
If one shows that the upper triangle commutes, i.e. $c.(\eta_A|1_G)=j_{A,G}$ then one may conclude that the whole outer diagram commutes, so that the resulting  condition is void here. In fact, since $k_{A,G}$ is a monomorphism, it suffices to show that $k_{A,G}.c.(\eta_A|1_G)=k_{A,G}.j_{A,G}=\iota_{A,G}$. But by  Proposition \ref{stabact}, one has $k_{A,G}.c=c^{A+G}_2.(k_{A,G}|i_G)$, so one has:\\

\noindent$k_{A,G}.c.(\eta_A|1_G)$

$=c^{A+G}_2.(k_{A,G}|i_G).(\eta_A|1_G)$

$=c^{A+G}_2.(k_{A,G}.\eta_A|i_G)$

$=c^{A+G}_2.(i_A|i_G)$

$=\iota_{A,G}$ because the following diagram commutes:
$$\xymatrix{
(A|G)\ar[rr]^{(i_A|i_G)}\ar[d]_{\iota_{A,G}}&&(A+G|A+G)\ar[d]^{\iota_{A+G,A+G}}\\
A+G\ar@{=}[drr]\ar[rr]^{i_A+i_G}&&(A+G)+(A+G)\ar[d]^{\nabla_{A+G}}\\
&&A+G}$$
Secondly, we we precompose diagram (\ref{psiTalgdia})  with $(j_{A,G}|1)$:

$$\xymatrix{
((A|G)|G)\ar@/_2.5pc/[dd]_{(\xi.j_{A,G}|1_G)=(\psi|1_G)}\ar[r]^c\ar[d]^{(j_{A,G}|1_G)}&(A|G)\ar[d]^{j_{A,G}}\ar@/^2.5pc/[dd]^{\psi}\\ 
(TA|G)\ar[r]^c\ar[d]^{(\xi|1_G)}&TA\ar[d]^{\xi}\\
(A|G)\ar[r]_{\psi}&A}$$%
All parts of this diagram except the bottom square are known to commute, so diagram (\ref{psiTalgdia})  precomposed with $(j_{A,G}|1)$ commutes iff diagram (\ref{Talgcond2}) commutes.

Finally, we precompose diagram (\ref{psiTalgdia})  with  $\iota'=(<j_{A,G},\eta_A>|1_G).\iota$, $\iota$ here  being the injection of $((A|G)|A|G)$ into $((A|G)+A|G)$: 

$$\xymatrix{((A|G)|A|G)\ar[d]_{\iota}\\
((A|G)+A|G)\ar[rrr]^-{c'(<j_{A,G},\eta_A>|1_G)}\ar@/_2pc/[dd]_{(<\psi,1_A>|1_G)}\ar[d]^{(<j_{A,G},\eta_A>|1_G)}&&&(A|G)\ar[d]^{j_{A,G}}\\
(TA|G)\ar[d]^{(\xi|1_G)}\ar[rrr]^c&&&TA\ar[d]^{\xi}\\
(A|G)\ar[rrr]_{\psi}&&&A}$$%
The left-hand triangle commutes since $(\xi|1)(<j_{A,G},\eta_A>|1_G)$ = $(<\xi j_{A,G},\xi\eta_A>|1_G)$ = $(<\psi,1_A>|1_G)$, and  the upper square commutes by definition of $c'$.
Hence diagram (\ref{psiTalgdia})  precomposed with $\iota'$ commutes iff diagram (\ref{Talgcond3}) commutes, which achieves the proof.
\end{proof}

\noindent{\bf Example: }In the category of groups, consider a map $\psi:(A|G)\rightarrow A$ and the corresponding $\phi:G\times A\rightarrow G$\footnote{Note that if one defines $\phi$ from $\psi$, one can put $\phi(g,a)=\psi([g,a]).a$, even if $[g,a]$ is the unit, i.e. $g$ is the unit of $G$ or $a$ is the unit of $A$: this insures that $\phi(1,a)=a$ and $\phi(g,1)=1$, and of course $\psi[g,a]=\phi(g,a).a^{-1}$. So there is no need for a special discussion for the case when $gg_1=1$ in what follows.}. We know that the first axiom in Proposition \ref{3diag} expresses that the $\phi(g,-)$ are group endomorphisms of $A$. We now examine the second diagram. Considering that $(X|Y)$ is the subgroup of $Y+X$ generated by the $[x,y]$'s (with $x$, $y$ different from the units), a generator of $((A|G)|G)$ has the form $\llbracket g,\prod_{i= 1}^n[g_i,a_i]^{z_i}\rrbracket$, where the outer commutator $\llbracket\ldots\rrbracket$ is considered in $(A+G)+G$, while the inner one $[\ldots]$ is in $A+G$. First consider the case  $n=1$ with $z_1=1$. For a generator $\llbracket g,[g_1,a_1]\rrbracket$ one has $c(\llbracket g,[g_1,a_1]\rrbracket)=[g,[g_1,a_1]]$ = $[gg_1,a_1][a_1,g][a_1,g_1]$ = $[gg_1,a_1][g,a_1]^{-1}[g_1,a_1]^{-1}$, the second equality arising from the formula $[x,[y,z]]=[xy,z][z,x][z,y]$. So one gets $\psi(c(\llbracket g,[g_1,a_1]\rrbracket)$ = $\psi([gg_1,a_1][g,a_1]^{-1}[g_1,a_1]^{-1})$ = $\psi([gg_1,a_1])(\psi([g,a_1])^{-1}\psi([g_1,a_1])^{-1}$ = $\phi(gg_1,a_1)a_1^{-1}.(\phi(g,a_1).a_1^{-1})^{-1}.(\phi(g_1,a_1).a_1^{-1})^{-1}$ = $\phi(gg_1,a_1).(\phi(g,a_1))^{-1}.a_1.(\phi(g_1,a_1))^{-1}$ on the one hand. On the other hand, one gets $\psi((\psi|1_G))(\llbracket g,[g_1,a_1]\rrbracket)$ = $\psi([g,\psi([g_1,a_1]))$ = $\psi([g,\phi(g,a_1)a_1^{-1}])$ = $\phi(g,\phi(g,a_1)a_1^{-1})(\phi(g,a_1)a_1^{-1})^{-1}$ =  $\phi(g,\phi(g,a_1)a_1^{-1}))a_1(\phi(g,a_1))^{-1}$. %
So the equation $\psi(c(\llbracket g,[g_1,a_1]\rrbracket)$ = $\psi((\psi|1_G))(\llbracket g,[g_1,a_1]\rrbracket)$ is verified if and only if so is the equation $\phi(gg_1,a_1).(\phi(g,a_1))^{-1}$ = $\phi(g,\phi(g_1,a_1).a_1^{-1})$. Now if one assumes moreover that  the first diagram commutes, i.e. that the $\phi(g,-)$'s are endomorphisms of $A$, then this amouts to the relation $\phi(gg_1,a_1).(\phi(g,a_1))^{-1}$ = $\phi(g,\phi(g_1,a_1)).(\phi(g,a_1))^{-1}$, i.e. $\phi(gg_1,a_1)=\phi(g,\phi(g_1,a_1))$. So, assuming the commutativity of the first diagram in Proposition \ref{3diag}, the commutativity of the second one  implies that $\phi$ is a group action in the usual sense; and of course, conversely, if $\phi$ is an action in the usual sense, then the corresponding $\psi$ is an action in our sense, hence the three diagrams of Proposition \ref{3diag} commute.\\



\begin{thebibliography}{12}
\bibitem{BP}    H.J.\ Baues,  T.\ Pirashvili,   Quadratic endofunctors of the category of groups, Adv.\ Math.\ {\bf 141} (1999), no.\ 1, 167-206.

\bibitem{BB} F. \ Borceux, D. \ Bourn,  \textit{Mal'cev, protomodular, homological and semi-abelian categories}, Mathematics and its Applications {\bf 566}, Kluwer Academic Publishers, 2004.

\bibitem{BJK} F.\ Borceux, G.\ Janelidze, M.\ Kelly, Internal object actions, \textit{Commentationes Mathematicae Universitatis Carolinae} {\bf46}  no. 2 (2005), 235Ð255.

\bibitem{B}D.\ Bourn,  An intrinsic approach to the universal property of 
the comparison $X + Y \twoheadrightarrow  X  \times Y$ in the regular 
unital and homological settings, unpublished note, 2009.

\bibitem{BJ}  D. \ Bourn,  G.\ Janelidze, Protomodularity, descent and semi-direct products,  \textit{Theory Appl. Categ.} {\bf 4}  no 2 (1998), 37-46.

\bibitem{EML}   S.\ Eilenberg, S.\ Mac Lane, {\em On the groups $H(\pi,n)$ II}\/,  Ann.\ of Math., {\bf 60} (1954), 49-139.


\bibitem{HV} M.~ Hartl, C.~Vespa,  Quadratic functors on pointed categories,  submitted, arxiv:0810.4502, 64p.

\bibitem{CCC} M.\ Hartl, B.\ Loiseau, Polynomial functors, nilpotent categories, quadratic algebra and operads, in preparation.

\bibitem{MKS}W.\ Magnus, A.\ Karrass and D.\ Solitar, \textit{Combinatorial group theory}, Wiley 
Interscience, 1966.

\bibitem{MM} S.\ Mantovani, G.\ Metere, Normalities and Commutators, arxiv.org/abs/1002.1372.

\end{thebibliography}
\end{document}